\newtheorem{definition}{Definition}[section]
\newtheorem{theorem}{Theorem}[section]
\newtheorem{proposition}{Proposition}[section]
\newtheorem{corollary}{Corollary}[section]
\newtheorem{remark}{Remark}[section]
\allowdisplaybreaks \numberwithin{equation}{section}
\newcommand{\R}{\mathbb{R}} \newcommand{\N}{\mathbb{N}}
 \newcommand{\fhi}{\varphi}
 \newcommand{\pt}{\partial_t}
 \newcommand{\px}{\partial_x }
 \newcommand{\F}{{\mathcal F}}
\DeclareMathOperator{\diver}{div}
\DeclareMathOperator{\tv}{TV}
\DeclareMathOperator*{\argmax}{arg\,max}
\DeclareMathOperator{\supp}{Supp}
\begin{document}

\title[On the optimization of conservation law models
at a junction]
{On the optimization of  conservation law models 
at a junction with inflow and flow distribution controls}

\author{F.~Ancona} \address[Fabio Ancona]{\newline Dipartimento
  di Matematica ``Tullio Levi-Civita'', Universit\`a di Padova, Via Trieste 63, 35121 Padova, Italy.}  
  \email[]{ancona@math.unipd.it}
\urladdr{http://www.math.unipd.it/~ancona/}

\author{A.~Cesaroni} \address[Annalisa Cesaroni]{\newline Dipartimento
  di Scienze Statistiche, Universit\`a di Padova, Via Cesare Battisti  141, 35121 Padova, Italy.}  
  \email[]{annalisa.cesaroni@unipd.it}
\urladdr{http://homes.stat.unipd.it/annalisacesaroni/}

\author{G. M. Coclite} \address[Giuseppe Maria Coclite]{\newline
  Dipartimento di Meccanica, Matematica e Management, Politecnico di Bari, Via E.~Orabona 4,  I--70125 Bari, Italy.}  
  \email[]{giuseppemaria.coclite@poliba.it}
\urladdr{https://sites.google.com/site/coclitegm/}

\author{M. Garavello} \address[Mauro Garavello]{\newline Dipartimento
  di Matematica e Applicazioni, Università di Milano Bicocca,
  Via  R. Cozzi 55, I--20125 Milano, Italy.}
\email[]{mauro.garavello@unimib.it}
\urladdr{http://www.matapp.unimib.it/~garavello}

\date{\today}

 \subjclass[2010]{35F25, 35L65, 90B20}

 \keywords{Conservation laws, traffic models, networks, weak solutions.}

 \thanks{The authors are members of the Gruppo Nazionale per l'Analisi Matematica, la Probabilit\`a e le loro Applicazioni (GNAMPA) of the Istituto Nazionale di Alta Matematica (INdAM). The authors were partially supported by PRAT 2013- Traffic Flow on Networks: Analysis and Control, University of Padova}

 \begin{abstract}
 The paper proposes a general framework to analyze control problems 
 for conservation law models on a network.
 Namely we consider a 
 general class of junction distribution controls and inflow controls
 and we establish the compactness in $\mathbf{L}^1$ of a class of flux-traces of solutions. 
 We then derive the existence of solutions for two optimization problems: (I) the maximization of 
 an integral functional depending on the flux-traces of solutions evaluated at points of the incoming and outgoing edges;
 (II) the minimization of the total variation of the  optimal solutions of problem (I). 
 Finally we provide an equivalent variational formulation of the min-max problem (II) and we discuss
 some numerical simulations  for a junction with two incoming and two outgoing edges.
 \end{abstract}

\maketitle

\section{Introduction}

Fluid-dynamic models  of traffic flow on networks,
based on conservation laws, have been intensively investigated 
in the last twenty years.
For a general introduction
we refer to~\cite{bcghp, g-h-p, gp2, ho-bo}. 
We recall that the idea of modeling unidirectional car traffic on a single road 
in terms of the scalar conservation law 
\begin{equation}
  \label{conlaw}
  \pt u +\px f(u)=0 
\end{equation}
was first proposed in the seminal papers by Lighthill, Whitham and 
Richards (LWR model)~\cite{li-whi,ri}. Here, the unknown $u=u(t,x)$ denotes the traffic density 
taking values 
in a compact interval~$\Omega$, and the flux has the form $f(u) = u\,v(u)$, where $v(u)$ is the average velocity of cars which is assumed to depend on the density alone.

\subsection*{Setting of the problem}
In order to determine the evolution of vehicular traffic in an entire network of roads  modeled by a directed
graph, one has to further assign a set of suitable boundary conditions at road intersections.
Due to finite propagation speed, it will be sufficient to analyze the local solution in a neighborhood of
each intersection to capture the global behavior on the whole network.
To fix the ideas, let us consider 
a graph composed by a single vertex (or node), 
with $m$
incoming edges $I_i$, $i\in\mathcal{I}=\{1,\dots, m\}$, and $n$ outgoing ones $I_j$,
$j\in\mathcal{O}=\{m+1,\dots,m+n\}$.
We may model each incoming edge  with the half-line $(-\infty, 0)$ and
every outgoing one with the half-line $(0, +\infty)$.  
In this way, the junction is  always sitting at  $x=0$.
Denoting with $u_\ell$ a traffic density in $I_\ell$
($\ell \in \mathcal I \cup \mathcal O$), the conservation law~(\ref{conlaw})
on every incoming and outgoing edge must be supplemented
with some initial condition $\bar u_\ell$, which
thus yields the Cauchy problems
\begin{equation}
  \label{incoming} 
  \begin{cases} \pt u_i +\px f(u_i)=0 \qquad & x<0, \, t>0,\\ 
    u_i(0,x)= \overline u_i(x) & x<0,\end{cases}
\end{equation} 
for every $i\in\mathcal{I}$, and
\begin{equation}\label{outgoing}
  \begin{cases} \pt u_j +\px f(u_j)=0  \qquad & x>0, \, t>0,\\ 
    u_j(0,x)= \overline u_j(x) & x>0,\end{cases}
\end{equation} 
for every $j\in\mathcal{O}$,
where we may assume that the initial conditions
$\overline u_\ell: I_\ell\to \Omega$,
$\ell\in\mathcal{I}\cup\mathcal{O}$,
are given $\mathbf{L}^1$-functions with bounded variation.

The equations in~\eqref{incoming}-\eqref{outgoing} are usually coupled through
transition conditions prescribed at the boundary $x=0$ (also called nodal
conditions, coupling conditions, or junction conditions).
Typically, one introduces such conditions to guarantee that~\eqref{incoming}-\eqref{outgoing} 
admits  a unique solution so to show that the Cauchy problem at a junction
 is well-posed. 
Thus, in particular, every $(m\!+\!n)$-tuple
 of initial data $\overline u_\ell, \ell\in\mathcal{I}\cup\mathcal{O}$
 determines a unique $(m\!+\!n)$-tuple of incoming and outgoing fluxes
$g_\ell\doteq f(u_\ell(\cdot, 0)), \ell\in\mathcal{I}\cup\mathcal{O}$ of the corresponding solution
to~\eqref{incoming}-\eqref{outgoing}.
In this paper, instead, we consider a minimal set of natural coupling conditions  
and treat as control laws both the flow distribution parameters related to such
conditions and  the incoming fluxes at the junction (the {\it inflows})  that are compatible with 
them.
Our intention here is to propose a general set-up to analyze optimal control problems 
on networks
with cost functionals
depending on flow distribution controls and inflow junction controls.

\subsection*{Review of the literature}
Optimization and control issues for 
  network models based on conservation laws have raised an increasing interest
  in the last decade,
motivated by a wide range of applications in various 
research fields. In fact, beside vehicular traffic
(see~\cite{
ba-ha-co-da,bk2,cgr,dm-p-r,ghkl,g-k-l-w,JdWK1,JdWK2,rkdmsg} and references therein), such kind of problems have been addressed
in: air traffic~\cite{brt,mi-ba-to}, 
supply chains~\cite{Da-G-H-P, Da-M-H-P, hfy, LM-A-H-R}, irrigation channels~\cite{dh-pr-co-da-ba}, 
gas pipelines~\cite{cghs},
telecommunication and data~\cite{cmpr}, bio-medical~\cite{bcghp,CoGa}, 
blood circulation~\cite{ca-d-d-m}, 
socio-economical 
and other areas.
In these contexts, a crucial role is played by the design, analysis 
and numerical implementation of 
controls acting at the nodes of the network.
The investigation of optimal control properties of time varying parameters corresponding to junction distribution coefficients
 have been considered for example in~\cite{cdpr1,cdpr2,cmpr,gp3,ghkl,hkk,hk,odmgk} 
while inflow controls have been analyzed in~\cite{cgr,Da-M-H-P,ghz,li-ca-cl}.

To introduce our control framework
let's focus again on the LWR model. 
The 
transition conditions at a junction, in a realistic car traffic model, are determined by: (i)~{\it drivers' turning preferences}  and 
(ii)~{\it relative priority assigned to different incoming roads}.

As described in~\cite{CGP, gp2,  hr}, 
the nodal condition (i)  can be expressed requiring that the flux traces 
of the solutions to~\eqref{incoming}-\eqref{outgoing} at $x=0$ satisfy some, possibly time-varying,
distribution rules. Namely, consider a  $n\times m$ Markov matrix $A(t)=(a_{ji}(t))_{j,i}$, with
\begin{equation}
  \label{distr-rules}
  \qquad
  0\le a_{ji}(t)\le 1\quad\forall~j\in\mathcal{O},i\in\mathcal{I},t>0,\qquad \quad \sum_{j=m+1}^{m+n} a_{ji}(t)=1
  \qquad\forall~i\in\mathcal{I},t>0,
\end{equation}
and impose
the condition
\begin{equation}
  \label{distribution}
  f (u_j(t,0^+)) = \sum_{i=1}^m a_{ji}(t) f(u_i(t,0^-))
  \qquad\textrm{for a.e.} \ \ t>0,
  \qquad \forall\, j \in \mathcal O,
\end{equation}
where $u_\ell(t,0^\pm)\doteq \lim_{x\to 0^\pm} u_\ell(t,x)$. 
Throughout the following we shall simply adopt the notation $u_\ell(t,0)$
for the one-sided 
limit of $u_\ell(t,\cdot)$ at the boundary $x=0$.
Here $a_{ji}(t)$ represents the
fraction of drivers arriving from the $i$-th incoming road that wish to turn
on the $j$-th outgoing road at time~$t$. Notice that, because
of~\eqref{distr-rules}, the condition~\eqref{distribution} in
particular implies a Kirchhoff type formula
\begin{equation*}
  \sum_{j=m+1}^{m+n}f(u_j(t,0))=\sum_{i=1}^m f(u_i(t,0))
  \qquad\textrm{for a.e.} \ \ t>0,
\end{equation*}
which expresses the conservation of the total flux of cars through the
node.
Instead, the nodal condition (ii) is expressed in~\cite{CGP, gp2} assigning right of way parameters
$\eta_i(t)\in [0,1]$, $i\in\mathcal{I}$, with $\sum_i\eta_i (t)=1$. For example, in
an intersection
regulated by a traffic signal, the coefficient $\eta_i$ can be interpreted as the
fraction of time in which it is given a green light to cars arriving from the $i$-th road.
\medskip

However, 
the nodal conditions~(i)-(ii) above described in general are satisfied by
infinite many solutions to~\eqref{incoming}-\eqref{outgoing}.
Various approaches have been proposed in the literature to overcome this ill-posedness 
of the Cauchy problem.
A widely used method is based on the introduction of a Junction Riemann Solver, i.e. a  rule 
to construct a unique self-similar solution of the Cauchy problem~\eqref{incoming}-\eqref{outgoing}
when the initial data is constant on each incoming and outgoing edge. 
Once a Junction Riemann Solver
is given, the solution of 
the Cauchy problem at a junction for general initial data is  then 
constructed by 
a standard wave-front tracking technique~\cite{CGP, gp2, gp4, hr}. 
A~different model, which is not based on the construction of a Junction Riemann Solver, was proposed in~\cite{bk1}.
Here,  cars seeking to enter a congested road wait in a buffer of limited capacity and 
 the distribution coefficients $a_{ji}(t)$ in~\eqref{distribution} are regarded
as the boundary values at $x=0$ of passive scalars transported by a semilinear equation coupled with 
the LWR equation~\eqref{conlaw}. In this model the unique solution of 
a general Cauchy problem~\eqref{incoming}-\eqref{outgoing} is obtained 
employing an extension of the Lax-Ole\v{\i}nik formula to the initial 
boundary value problem and determining the
length of queues inside a buffer as the fixed point of a contractive transformation.
In fact, it is shown  in~\cite{br-no} that a specific  Riemann Solver at the junction determines the limiting solution
obtained in presence of a buffer when the buffer's capacity approaches zero.
Instead, a Junction Riemann Solver for 
a source destination model that incorporates a dynamic description of the car path choices
was previously introduced in~\cite{gp5}.

We point out that alternative approaches to establish a well-posedness theory of traffic flow on networks, 
based on the analysis of Hamilton-Jacobi equations at a junction,
 have been developed in the last years, using either PDE methods (see the papers  ~\cite{im,l-s}  
 and references therein) or optimal control interpretation (see 
 \cite{at} and references therein).  
In~\cite{acd, cg} it is pursued an analysis of the well-posedness  of~\eqref{incoming}-\eqref{outgoing} based
on a vanishing viscosity approximation and adopting a similar framework
 of the theory of germs introduced in~\cite{akr} for conservation laws with discontinuous flux. 
Additional models can be found in~\cite{g-h-p}. 
We refer also to~\cite{dag, leb} for traffic engineering references.

\subsection*{The Riemann solver approach}
We remark that all the models without buffer proposed in the literature
provide a procedure to associate to any given $(m+n)$-tuple
of initial conditions $\overline u_\ell,\, \ell\in\mathcal{I}\cup\mathcal{O}$,
a unique $(m+n)$-tuple of boundary values $\widetilde u_\ell,\, \ell\in\mathcal{I}\cup\mathcal{O}$,
so that the resulting solutions $u_\ell(t,x)$, $(t,x)\in  (0,+\infty)\times I_\ell$
 of the  Cauchy-Dirichlet  problems
with initial data $\overline u_\ell$   and boundary data $\widetilde u_\ell$
satisfy the nodal constraint~\eqref{distribution}.
In this paper we adopt a different perspective following a control theoretic approach.
Namely, we propose a general framework to treat optimal control problems at a junction
where one  regards as junction controls the distribution coefficients $a_{ji}(t)$
and the incoming fluxes $ f(u_i(t,0))$
compatible with the transition condition~\eqref{distribution}.

To illustrate our control strategy let's recall the standard construction performed
for a Riemann problem at a junction with a constant matrix distribution  $A=(a_{ji})_{j,i}$  satisfying~\eqref{distr-rules}. Namely, consider the  Cauchy problem
\begin{equation}
  \label{RP-node}
  \begin{cases} \pt u_\ell +\px f(u_\ell)=0 \qquad & x\in I_\ell, \, t>0,\\ 
    u_\ell(0,x)= \overline u_\ell & x\in I_\ell,
  \end{cases}
  \qquad  
  \ell=1,\dots,m+n,
\end{equation}
where $\overline u_1,\dots, \overline u_{m+n}\in \Omega$ are constants.
In connection with every $\overline u_i$, $i\in\mathcal{I}$,
we define
the set $ \Omega_{\overline u_i}^{\scriptscriptstyle{<0}}$ consisting
of all states $\widetilde u_i\in \Omega$ for which the entropy
admissible solution of the classical Riemann problem on the whole real line
\begin{equation}
  \label{RP-classical-1}
  \nonumber
  \left\{ 
    \begin{aligned}
      &\pt u_i +\px f(u_i)=0\qquad  x\in \R, \, t>0,\\
      \noalign{\smallskip} &u_i(0,x)=
      \begin{cases}
        \overline u_i\quad &\textrm{if}\quad\ x<0, \\
        \widetilde u_i\quad &\textrm{if}\quad\ x>0,
      \end{cases}
    \end{aligned}
  \right.
\end{equation}
contains only waves with nonpositive characteristic speeds.
Similarly, for every $\overline u_j$, $j\in\mathcal{O}$,
we let $\Omega_{\overline u_j}^{\scriptscriptstyle{>0}}$ denote the set of
all states $\widetilde u_j\in \Omega$ for which the entropy admissible
solution of the classical Riemann problem
\begin{equation}
  \label{RP-classical-2}
  \nonumber
  \left\{ 
    \begin{aligned}
      &\pt u_j +\px f(u_j)=0\qquad  x\in \R, \, t>0,\\
      \noalign{\smallskip} &u_j(0,x)=
      \begin{cases}
        \widetilde u_j \quad &\textrm{if}\quad\ x<0, \\
        \overline u_j\quad &\textrm{if}\quad\ x>0,
      \end{cases}
    \end{aligned}
  \right.
\end{equation}
contains only waves with nonnegative characteristic speeds.
Then, given
$\overline u =(\overline u_1, \dots, \overline u_{m+n})\in
 \Omega^{m+n}$, consider the closed, convex, not empty set
\begin{equation}
  \label{Gamma-def}
  \Gamma_{\overline u}\doteq
  \left\{ (\gamma_1,\dots,\gamma_m)\in\prod_{i=1}^m
  f( \Omega_{\overline u_i}^{\scriptscriptstyle{<0}})\ : \
  \big(A \cdot (\gamma_1,\dots,\gamma_m)^T\big)^{\!T}\in 
  \prod_{j=m+1}^{m+n}
  f( \Omega_{\overline u_j}^{\scriptscriptstyle{>0}})
  \right\},
\end{equation}
where  $\gamma^T$
denotes the (column) transpose vector of $\gamma$. 
Clearly $\left(0, \cdots, 0\right) \in \Gamma_{\overline u}$ and so
$\Gamma_{\overline u}$ is not empty. Moreover it is closed since it is
the preimage of a closed set through a continuous function, and the convexity
follows from the linearity of the function associated to $A$.
The set
$\Gamma_{\overline u}$ describes the flux-traces at $x=0$ on the incoming
roads $I_i$, $i=1,\dots,m$, of every self-similar entropy admissible
solution to the nodal Riemann problems~\eqref{RP-node} that satisfies
the conditions~\eqref{distribution} for a constant $A$.
In general, the set $\Gamma_{\overline u}$ in~\eqref{Gamma-def} contains more than
one point so, to achieve
uniqueness, an
optimization criterion is usually imposed
for example requiring the maximization of the total flux through
the junction~\cite{CGP, gp2, gp4, hr}. 
An alternative approach to identify a unique Riemann solver at the junction is proposed in~\cite{acd}
in the same spirit of  the theory of germs for conservation laws with discontinuous flux.

Once it is provided a procedure to associate to any $\overline u \in \Omega^{m+n}$
a unique element $\gamma_{\overline u} \in \Gamma_{\overline u}$
that identifies the boundary flux-traces of the solution of~\eqref{RP-node}
selected by the chosen admissibility criterion, 
one can require that
the solution $u(t,x)$ of a general Cauchy
problem at the junction~\eqref{incoming}-\eqref{outgoing}
satisfies the nodal condition
\begin{equation*}
  \big(f (u_1(t,0)),\dots, f(u_m(t,0))\big)=\gamma_{u(t,0)}
  \in\Gamma_{u(t,0)}
  \qquad\forall~t>0,
\end{equation*}
where 
\[u(t,0)=(u_1(t,0),\dots, u_m(t,0),u_{m+1}(t,0),\dots,  u_{m+n}(t,0))\]  
denotes the
$(m+n)$-tuple of traces at $x=0$ of $u(t,\cdot)$.

\subsection*{Our approach and main result}
In the present paper instead, aiming to perform a control theoretic analysis,
 we consider as {\it admissible solutions of the nodal Cauchy Problem}~\eqref{incoming}-\eqref{outgoing} 
every function $u \doteq (u_\mathcal{I},\, u_\mathcal{O})$
with $u_\mathcal{I}: [0,+\infty)\times [0,+\infty) \to \Omega^m$,
$u_\mathcal{O}: [0,+\infty)\times (-\infty, 0] \to \Omega^n$, 
that for some  $k_\ell:  [0,+\infty) \to \Omega$,
$\ell\in\mathcal{I}\cup\mathcal{O}$, provides an entropy admissible solution of the mixed initial-boundary value problems
\begin{equation}
  \label{bdry-incoming} 
  \begin{cases} \pt u_i +\px f(u_i)=0 \qquad & x<0, \, t>0,\\ 
    u_i(0,x)= \overline u_i(x) & x<0,\\
    u_i(t,0)= k_i(t) & t>0,\end{cases}
    \qquad  i\in\mathcal{I},
\end{equation} 
\begin{equation}
  \label{bdry-outgoing} 
  \begin{cases} \pt u_j +\px f(u_j)=0 \qquad & x>0, \, t>0,\\ 
    u_j(0,x)= \overline u_j(x) & x>0,\\
    u_j(t,0)= k_j(t) & t>0,\end{cases}
    \qquad  j\in\mathcal{O},
\end{equation} 
and satisfies the linear constraint~\eqref{distribution}
(see Definition~\ref{defi}).
Observe that, in general, the boundary data $k_\ell,\, \ell\in\mathcal{I}\cup\mathcal{O}$ are not pointwise attained  by the
entropy admissible solutions of the Cauchy-Dirichlet problems~\eqref{bdry-incoming}, \eqref{bdry-outgoing}
because of the presence of boundary layers.
In fact, for conservation laws the boundary conditions are
enforced in the weak sense of Bardos, Le Roux, N\'ed\'elec~\cite{BLN}.
 Moreover, by uniqueness, the solutions to~\eqref{bdry-incoming}, \eqref{bdry-outgoing} can be equally
 defined as the entropy admissible solutions of the corresponding mixed problems with assigned initial datum~$\overline u_\ell$
 and boundary flux $f(u_\ell(t,0))$ (cfr.~\cite[Proof of Theorem 2.2]{lf}).
Therefore, we may uniquely identify every admissible solutions of the nodal Cauchy Problem~\eqref{incoming}-\eqref{outgoing}
assigning the matrix valued map $A(\cdot)$ satisfying~\eqref{distr-rules}
and the $m$-tuple of boundary incoming flux-traces 
$\gamma=(f (u_1(\cdot,0)),\dots, f(u_m(\cdot,0))\big)$
which, in turn, by~\eqref{distribution} determines also the $n$-tuple of boundary outgoing flux-traces
$(f (u_{m+1}(\cdot,0)),\dots, f(u_{m+n}(\cdot,0))\big)$.
We shall denote by $u(t,x;\, A,\gamma)$ such a solution
where $\gamma$ denotes an $m$-tuple of {\it admissible boundary incoming flux-traces}
(see Definition~\ref{def-adm-bdr-flux}).
 Then, for a fixed initial datum $\overline u\in  \Pi_{\ell=1}^{m+n} \ \mathbf{BV}(I_\ell;  \Omega)$,
 and for a fixed time~$T>0$,
 we will address the following optimization problems.
\begin{itemize}
\item[{\bf (I)}] Given a continuous map $\mathcal{J}: \R^m\to\R$, fix $M>0$
and consider 
\begin{equation}
\label{opt-pb1}
  \sup_{\, A,\,\gamma\ \, }  \int_0^T \mathcal{J}\big(f(u_1(t,0;\, A,\gamma)),\dots, f(u_m(t,0;\, A,\gamma))\big)dt\,,
\end{equation}
where the supremum is taken over all pairs of 
matrix valued maps $A(\cdot)=(a_{ji}(\cdot))_{j,i}\in   \mathbf{BV}([0,T];\R^{m\times n})$ 
satisfying~\eqref{distr-rules}
and  admissible boundary flux-traces $\gamma=(\gamma_1, \dots,\gamma_{m})
\in \mathbf{BV}\big([0,T];  \big(f(\Omega)\big)^{\!m}\big)$,
with  total variation bounded by
$$
\tv\{a_{ji}\}\leq M\quad\forall j\in\mathcal{O}, i\in\mathcal{I}, \qquad\quad \tv\{\gamma_i\}\leq M
\quad\forall i\in \mathcal{I}.
$$ 
\smallskip
\item[{\bf (II)}] The pairs of junction controls $(\widehat A,\,\widehat\gamma\,)$ that optimize~\eqref{opt-pb1}
are in general not unique, as shown in Section~\ref{sec:opt}. 
 Let $\mathcal{U}^M_{max}$ denote the set of such optimal pairs.
 In order to restrict the set of optimal solutions,
 we then consider the minimization problem
\begin{equation*}
  \inf_{\, (\widehat A(\cdot),\,\widehat\gamma\,)\,\in\,\mathcal{U}^{^{M}}_{max}\, } 
  \sum_{i=1}^m  \tv \left\{f\big(u_i\big(\cdot, 0;\, \widehat A,
    \widehat \gamma\big)\big)\right\}.
\end{equation*}
\end{itemize}
\smallskip
The  goal of the above  problems is to maximize suitable functionals depending on the through flux 
at the junction in a fixed time interval (for example the time-average sum or product of the inflows within a day),
keeping as small as possible the oscillation of the incoming fluxes.
The control parameters determine the percentage of drivers who take the different roads emerging
from the junction ({\it traffic distribution controls}) and regulate the rate at which the vehicles pass through the junction 
({\it inflow controls}). The former can be practically implemented with the use of 
route information panels
giving recommendations to the drivers
 to take one of the outgoing roads, while the latter can be enforced by  ramp meter signals, 
traffic light timing, 
yielding and stop signs, integrated vehicular and roadside sensors so to modulate
the amount of incoming fluxes entering the junction. 
These type of control approaches  are of critical importance in 
traffic management since they allow to improve the performance of traffic system, alleviate congestion, 
reduce pollution and accidents without requiring expensive road constructions to increase road capacity.

Notice that,  an admissible solution of the junction Cauchy problem~\eqref{incoming}-\eqref{outgoing}
defined as above
may exhibit entropy admissible shocks originated at positive times from a point of the line $x=0$ and stationary nonclassical shocks 
(see~\cite{LFnc})
located at $x=0$. Moreover, an admissible solution of a junction Riemann problem may well  be not self-similar, in the sense that it is not constant
along line exiting from the origin.
We may observe that  such features
of the admissible solutions 
considered in the present paper
occur in various 
 models of conservation laws arising in different contexts: like in the case of discontinuous flux~\cite{AMVG1, an-ca}, 
of local~\cite{a-g-s, CoGo} or nonlocal~\cite{a-d-r} constraints,
of LWR model (or also higher order models)
coupled with ODE modeling  a buffer dynamics~\cite{dm-r-s-k-g-b}.

Our main contributions can be summarized as follows.
We shall first establish a compactness result
 for some classes of flux-traces of solutions, which in turn yields
the existence of solutions to the optimization problems {\bf (I)-(II)}
in their general setting.
 
Next, we provide an equivalent variational formulation of the optimization problem {\bf (II)}
which is useful also for numerical investigations of the optimal solutions. 
Finally, we analyze the optimization problem~\eqref{opt-pb1} 
in the case where the supremum is taken over all pairs of 
matrix valued maps $A(\cdot)$ and  admissible boundary flux-traces with arbitrarily large total variation.


We stress that, in our intention,
the approach pursued
 in this paper is a first step towards building a general  strategy
to address control problems on networks, which can be applied in various contexts 
not limited to vehicular traffic models.

\subsection*{Organization of the paper}
Section~\ref{sec:preliminaries} is
devoted to preliminary results on boundary value problems for
conservation laws and on the Cauchy problem for a junction. In Section~\ref{sec:opt} we 
derive the compactness properties for flux-traces of solutions and
analyze the optimization problems {\bf (I)-(II)}.
Finally, in Section~\ref{sec:num}, we discuss
some numerical experiments 
for a junction with two incoming and two outgoing arcs.

\section{Preliminaries}
\label{sec:preliminaries}

\subsection{The Dirichlet and the junction
  Cauchy problem for conservation laws}
\label{dir-jct-problems}
Consider a directed graph composed by a single vertex, located at $x=0$,
with $m$ incoming arcs 
$I_i=(-\infty, 0)$, $i\in\mathcal{I}=\{1,\dots, m\}$,
and $n$ outgoing ones $I_j=(0, +\infty)$,
$j\in\mathcal{O}=\{m+1,\dots,m+n\}$.
On each arc, the evolution of the unknown density  $u_\ell(t,x)$ is governed by the
scalar conservation law 
\begin{equation}
  \label{conlaw-l}
  \pt u_\ell +\px f(u_\ell)=0,\qquad\quad t\geq 0, \ x \in I_\ell\,.
\end{equation}
The flux function $f:\Omega\to\R$
is defined on a compact interval $\Omega = [0, u^{max}]$
and satisfies the following assumptions:
\begin{enumerate}[label=\textbf{(A)}, align=left]
\item \label{hyp:A}
  \begin{enumerate}[label=\arabic*., align=left]
  \item $f \in \mathbf{C}^1\left(\Omega; \R\right)$ and is strictly
    concave.

  \item $f(0)=f(u^{max})=0$, \ $f'(u^{max})<0<f'(0)$.
  \end{enumerate}
\end{enumerate}

Here $u^{max} > 0$ denotes the maximal possible density inside a road.
We denote by $\theta\in\Omega$ the point of global maximum for $f$, i.e. 
\begin{equation*}
  \theta\doteq \argmax_{u\in\Omega} f(u)\,,
\end{equation*}
and, for every
$u \in \Omega \setminus \left\{\theta\right\}$,
we denote by $\pi(u)$ the unique point in $\Omega$ such that
\begin{equation*}
  f(u)=f(\pi(u))
  \qquad \textrm{ and } \qquad
    \pi(u) \ne u
  \,,
\end{equation*}
while we set  $\pi(\theta) \doteq \theta$.
Given an $(m+n)$-tuple of initial data 
$\overline u\in  \Pi_{\ell=1}^{m+n} \  \mathbf{L^\infty}(I_\ell;  \Omega)$
and boundary data 
$k\in \mathbf{L^\infty} \big([0,+\infty);  \Omega^{m\!+\!n}\big)$,
consider, for every
$i=1,\dots, m$, $j=m+1,\dots,m+n$,
the mixed initial-boundary value problems
\begin{equation}
\label{eq:conlaw1in}
  \begin{cases} \pt u_i +\px f(u_i)=0  \qquad & x<0, \, t>0,\\ 
    u_i(0,x)= \overline u_i(x) & x<0,\\
    u_i(t,0)= k_i(t) & t>0,\end{cases}
\end{equation} 
\begin{equation}
 \label{eq:conlaw1out}
  \begin{cases} \pt u_j +\px f(u_j)=0 \qquad & x>0, \, t>0,\\ 
    u_j(0,x)= \overline u_j(x) & x>0.\\
    u_j(t,0)= k_j(t) & t>0.\end{cases}
\end{equation} 
\begin{remark}
  For simplicity we decide to use the same flux function on each edge of the
  junction, but it is not a restrictive assumption. For real applications,
  it is natural to consider different flux functions on each road of
  the network. It is straightforward the extension of all the results of this
  paper to such a case, provided that each flux function satisfies the
  assumption~\ref{hyp:A}, with possibly different $u^{max}$.
\end{remark}
As observed in the Introduction, the Dirichlet conditions in~\eqref{eq:conlaw1in}, \eqref{eq:conlaw1out}
are, in general, not literally satisfied 
but must be interpreted in the relaxed sense 
of Bardos, Le Roux, N\'ed\'elec~\cite{BLN}. 
In particular, since we assume the strict concavity of the flux function $f$, 
 we may equivalently express 
the  boundary condition in~\eqref{eq:conlaw1in} requiring as  in~\cite{lf} that, for a.e. $t>0$,
there holds:
\begin{equation}
\label{Dir-bc-in}
  \begin{array}{c@{\qquad}c}
    \textrm{either} &  u_i(t,0)=\max\{k_i(t), \theta\}
    \vspace{.1cm}
    \\
    \noalign{\smallskip}
    \textrm{or}  &  \quad f'(u_i(t,0))\geq 0  \   \
    \textrm{ and } \ f(u_i(t,0))\leq f\big(\max\{k_i(t), \theta\}\big),
  \end{array}
\end{equation} 
while the boundary condition in~\eqref{eq:conlaw1out} are equivalent to require that, for a.e. $t>0$,
there holds:
\begin{equation}
\label{Dir-bc-out}
  \begin{array}{c@{\qquad}c}
    \textrm{either} &  u_j(t,0)=\min\{k_j(t), \theta\}
    \vspace{.1cm}
    \\
    \noalign{\smallskip}
    \textrm{or}  &  \quad f'(u_j(t,0))\leq 0  \   \
    \textrm{ and } \ f(u_j(t,0))\leq f\big(\min\{k_i(t), \theta\}\big).
  \end{array}
\end{equation} 
Moreover, one can provide an equivalent formulation of~\eqref{Dir-bc-in}-\eqref{Dir-bc-out} in terms of boundary flux conditions.
Namely, the boundary condition for the problem~\eqref{eq:conlaw1in} on the negative half-line 
can be expressed as
\begin{equation}
\label{Dir-bc-in-flux}
\begin{aligned}
 &\textrm{either}   \qquad\ \  f'(u_i(t,0))< 0\   \
    \textrm{ and } \ f'(k_i(t))<0, \quad f(u_i(t,0))=f(k_i(t)),
    \vspace{.1cm}
    \\
    &\ \ \ \textrm{or}     \qquad\quad\ f'(u_i(t,0))\geq 0  \   \
    \textrm{ and } \ f'(k_i(t))> 0,
     \vspace{.1cm}
    \\
    &\ \ \ \textrm{or}     \qquad\quad\ f'(u_i(t,0))\geq 0  \   \
    \textrm{ and } \  f'(k_i(t))\leq 0, \ \  f(u_i(t,0))\leq f(k_i(t)),
\end{aligned}
\end{equation}
while the boundary condition for the problem~\eqref{eq:conlaw1out} on the positive half-line 
can be expressed as
\begin{equation}
\label{Dir-bc-out-flux}
\begin{aligned}
 &\textrm{either}   \qquad\ \  f'(u_j(t,0))> 0\   \
    \textrm{ and } \ f'(k_j(t))>0, \quad f(u_j(t,0))=f(k_j(t)),
    \vspace{.1cm}
    \\
    &\ \ \ \textrm{or}     \qquad\quad\ f'(u_j(t,0))\leq 0  \   \
    \textrm{ and } \ f'(k_j(t))< 0,
     \vspace{.1cm}
    \\
    &\ \ \ \textrm{or}     \qquad\quad\ f'(u_j(t,0))\leq 0  \   \
    \textrm{ and } \  f'(k_j(t))\geq 0, \ \  f(u_j(t,0))\leq f(k_j(t)).
\end{aligned}
\end{equation}
As stated in the Introduction, $u_\ell(t,0)$ denotes the one-sided  
limit  at  $x=0$ of the solution  $u_\ell(t,\cdot)$ to ~\eqref{eq:conlaw1in}, \eqref{eq:conlaw1out}.
We recall that 
functions of one variable with locally bounded variation admit left
and right limits at every point. Moreover,
since an element  of $\mathbf{BV_{\rm loc}}(I_\ell)$
is an equivalence class of locally integrable functions,
we will always assume that a function $u\in \mathbf{BV_{\rm loc}}(I_\ell)$ 
is left continuous if $\ell\in\mathcal{I}$,
right continuous  if $\ell\in\mathcal{O}$,
possibly modifying its values at its countably
many discontinuity points (see \cite[Chapter 2, Lemma 2.1]{bressan-book}).
Here and throughout the paper, 
for a function $\psi\in\mathbf{BV}([\tau',\tau''])$
we shall always 
define $\tv\{\psi\}$ as the {\it essential variation} of $\psi$ on the open interval $(\tau',\tau'')$
which coincides with the {\it pointwise variation} on $(\tau',\tau'')$
of the right continuous or left continuous representative of~$\psi$
(see~\cite[Section 3.2]{AFP}). This implies that, if
$\psi$ is a right continuous (or left continuous) function, there holds
\begin{equation}
  \label{tv-def}
  \tv\{\psi\} =
  pV(\psi)\doteq \sup_{\tau'< t_0<t_1<\cdots<t_N< \tau''}
  \left\{\sum_{\ell=1}^N |\psi(t_\ell) - \psi(t_{\ell-1})|\right\},
\end{equation}
where $pV(\psi)$ denotes the pointwise variation of $\psi$.
For a  function $\psi\in \mathbf{BV}([0,T])$, 
and a subinterval $(\tau',\tau'')\subset [0,T]$, we shall also denote
as $\tv_{(\tau',\tau'')}\{\psi\} \doteq\tv\big\{\psi\big|_{(\tau',\tau'')}\big\}$
the pointwise variation
of the restriction on $(\tau',\tau'')$ of the
right continuous (or left continuous) representative of~$\psi$.

\begin{remark}
  \label{rmk:tv_right_cont}
  Consider a function $\psi\in \mathbf{BV}([0,T])$, let $\psi_1$ be a left continuous
  (or right continuous) representative of $\psi$, and consider any other element $\psi_2$
  of the same equivalent class
  in $\mathbf{L^1}\left((0,T)\right)$ of $\psi$, i.e. such that $\psi_1(t) = \psi_2(t)$ for a.e. 
  $t \in [0,T]$.
  Then, one has
  \begin{equation*}
  pV(\psi_1)\leq pV(\psi_2)\,.
  \end{equation*}
  In fact, the pointwise variation defined in~(\ref{tv-def}) clearly depends
  on the choice of the 
  representative of $\psi$ and the infimum is achieved by the 
  left continuous or right continuous representatives of  $\psi$ (cfr.~\cite[Section 3.2]{AFP}).
\end{remark}
We shall then adopt the following 
definition  of entropy admissible solution of the mixed initial-boundary
value problem for a conservation law~\eqref{conlaw-l} with concave flux
(see~\cite{BLN, cr, daf-book,kruz,lax,Ot,vo}).
\begin{definition}
  \label{def-ent-sol-ibvp}
  Given $\overline u_\ell\in  \mathbf{L^\infty} (I_\ell;  \Omega)$ and
  $k_\ell\in \mathbf{L^\infty} \big((0,+\infty);  \Omega\big)$,
  $\ell \in\mathcal{I}\cup\mathcal{O}$,
  we say that a function
  $u_\ell\in\mathbf{C}\big([0,+\infty);\, \mathbf{L}^1_{{\rm loc}}
  (I_\ell;  \Omega)\big)$
  is an entropy admissible weak solution
  of~\eqref{eq:conlaw1in} (resp. of~\eqref{eq:conlaw1out}) if :
  \begin{itemize}
  \item[(i)] For every $t > 0$,
    $u_\ell(t) \in \mathbf{BV_{\rm loc}}\left(I_\ell; \Omega\right)$ and admits the one-sided limit at $x=0$.

  \item[(ii)] $u_\ell$ is a weak entropy solution
    of~\eqref{conlaw-l} with initial data $\overline u_\ell$, i.e. for all 
    $\varphi\in\mathcal{C}_c^1\big(\R\times I_\ell;  \R\big)$ there holds
    \begin{equation}
      \label{entr-weak-sol}
      \int_0^{+\infty}\!\!\int_{I_\ell}\Big[u_\ell\,  \pt\varphi + f(u_\ell)\, 
      \px\varphi \Big](t,x)~dxdt + \int_{I_\ell}\overline u_\ell (x)
      \varphi(0,x)dx\, = 0\,,
    \end{equation}
    and the Lax entropy condition~\cite{lax} is satisfied 
    \begin{equation*}
      u_\ell(t,x^-)\leq u_\ell(t,x^+)\qquad\quad
      t>0, \ x \in I_\ell\,.
    \end{equation*}
  \item[(iii)] for a.e. $t>0$ the boundary condition at $x=0$ is
    verified in the sense of~\eqref{Dir-bc-in-flux}
    (resp. of~\eqref{Dir-bc-out-flux}).
  \end{itemize}
\end{definition}

\begin{remark}
  Notice that, because of the assumption {\bf (A)} on the flux function, an entropy admissible weak solution $u_\ell$
  of~\eqref{conlaw-l} satisfies the one-side  Ole\v{\i}nik
inequality on the decay of negative waves~\cite{Oleinik} which, together with the well-known $\mathbf{L^\infty}$ bounds on  $u_\ell$, yields 
 uniform BV-bounds on $u_\ell(t)$ at any fixed time $t>0$.
  Moreover, for every
  $t > 0$ and $\ell \in \mathcal I$,
  $u_\ell(t)$ admits the one-sided limit
  $u_\ell(t, 0)\doteq\lim_{x\to 0^-} u_\ell(t, x)$. 
    Indeed, if 
 $\lim_{x\to 0^-} u_\ell(t, x)$
 does not exist, then one can find two sequences
  $x_n$ and $y_n$ of negative numbers converging to $0$ such that
  \begin{equation*}
    v_1 = \lim_{n \to +\infty} u_\ell \left(t, x_n\right),
    \qquad
    v_2 = \lim_{n \to +\infty} u_\ell \left(t, y_n\right),
    \qquad
    v_1 < v_2.
  \end{equation*}
  Since $f$ is concave, then $f'(v_1) > f'(v_2)$ and so, for $n$ sufficiently
  large, we would deduce that
  $f'\left(u_\ell(t, x_n)\right) > f'\left(u_\ell(t, y_n)\right)$.
  By tracing the backward generalized characteristics~\cite{d,daf-book} starting at $(t,  x_n)$, 
  $(t,  y_n)$, we would then conclude that such
  characteristics intersect in $I_\ell$ at some positive time,
  which is
  not possible. A similar argument holds for the
  existence of $\lim_{x\to 0^+} u_\ell(t, x)$, $t > 0$, $\ell \in \mathcal O$.
  Therefore, condition \textup{(i)} of Definition~\ref{def-ent-sol-ibvp}
  is indeed a consequence of condition \textup{(ii)}.
  We have included it in the definition for
  sake of clarity.
\end{remark}

\begin{remark}
\label{BV-flux}
For every fixed point $x\in I_\ell$, $\ell\in \mathcal I \cup \mathcal O$,
the flux map $t\mapsto f\big(u_\ell(t, x)\big)$ of an entropy admissible weak solution $u_\ell$
 of~\eqref{conlaw-l}  is  nonincreasing  in presence of shock discontinuities. Hence, since
one may derive Ole\v{\i}nik-type 
inequalities on the positive variation of $f\big(u_\ell(\cdot, x)\big)$
as for the negative variation of $u_\ell(t, \cdot )$,
 it follows that $f\big(u_\ell(\cdot, x)\big)\in  \mathbf{BV_{\rm loc}} \left((0, +\infty); \Omega\right)$.
\end{remark}

\noindent
As observed in~\cite{lf}, it is not restrictive to assume that 
the boundary data have characteristics 
 always entering the domain, i.e. that 
\begin{equation}
\label{char-bdry-data}
\begin{aligned}
k_i(t)\geq \theta &\qquad\text{if}\quad\ i\in\mathcal{I},
\\
\noalign{\smallskip}
k_j(t)\leq \theta &\qquad \text{if}\quad\ j\in\mathcal{O}.
\end{aligned}
\end{equation}
In fact, 
the entropy admissible weak solutions $u_\ell$
of \eqref{eq:conlaw1in}, \eqref{eq:conlaw1out}
with boundary data $k_\ell$, $\ell\in\mathcal{I}\cup\mathcal{O}$,
can be as well obtained 
replacing $k_\ell$ with the {\it normalized boundary data}
\begin{equation*}
  \widetilde k_\ell(t)\doteq 
  \begin{cases}
    \max\big\{u_\ell(t,0),\, \pi(u_\ell(t,0))\big\} & \textrm{if} \quad\ \ell\in\mathcal{I},
    \\
    \noalign{\smallskip}
    \min\big\{u_\ell(t,0),\, \pi(u_\ell(t,0))\big\}& \textrm{if} \quad\ \ell\in\mathcal{O},
  \end{cases}
\end{equation*}
which satisfy~\eqref{char-bdry-data}
and
\begin{equation*}
  f\big(\,\widetilde k_\ell(t)\big)=f\big(u_\ell(t,0)\big)
  \qquad\forall~t > 0\,.
\end{equation*}
\medskip

\noindent
We next recall  a general property of  weak solutions of~\eqref{conlaw-l}  
that will be useful later. 
\begin{proposition}
  \label{divprop}
  Given $\overline u_\ell\in   \mathbf{BV}(I_\ell;  \Omega)$
  and  $k_\ell\in \mathbf{L^\infty} \big((0,+\infty);  \Omega\big)$,
  $\ell \in\mathcal{I}\cup\mathcal{O}$,
  let  $u_\ell$ be an entropy admissible weak solution of~\eqref{eq:conlaw1in}
  if $\ell \in \mathcal I$
  or \eqref{eq:conlaw1out} if $\ell \in \mathcal O$.
  Then, 
  for every $0 \le t_1 < t_2$, $x_0\in I_\ell$, 
  one has
  \begin{equation}
    \label{div-thm}
    \int_{t_1}^{t_2} f\left(u_\ell(t,0)\right) dt - 
    \int_{t_1}^{t_2} f\left(u_\ell(t,x_0^\pm)\right) dt 
    =  \int_{x_0}^0 u_\ell(t_1,x)dx - \int_{x_0}^0 u_\ell (t_2,x)dx.
  \end{equation}
\end{proposition}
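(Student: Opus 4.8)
The plan is to recognize \eqref{div-thm} as the integral balance law obtained by integrating the conservation law $\pt u_\ell + \px f(u_\ell)=0$ over the space--time rectangle $R=(t_1,t_2)\times(x_0,0)$ (I treat the incoming case $\ell\in\mathcal I$, so that $x_0<0$; the outgoing case is symmetric). Formally this is just the two--dimensional divergence theorem applied to the field $(u_\ell,f(u_\ell))$, whose distributional divergence vanishes: the four sides of $\partial R$ contribute the two spatial integrals at $t=t_1,t_2$ and the two flux integrals at $x=x_0,0$, and their signed sum is zero. Since $u_\ell$ is only $\mathbf{BV_{\rm loc}}$ and the equation holds only weakly, the actual argument realizes this computation by testing the weak formulation \eqref{entr-weak-sol} against a product of cutoffs approximating the characteristic function of $R$ and passing to the limit.

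Concretely, I would take test functions $\varphi(t,x)=\phi_\sigma(t)\,\psi_\delta(x)$, where $\phi_\sigma\in\mathcal{C}^1_c(\R)$ increases from $0$ to $1$ on $(t_1,t_1+\sigma)$, equals $1$ on $(t_1+\sigma,t_2-\sigma)$ and decreases back to $0$ on $(t_2-\sigma,t_2)$, while $\psi_\delta\in\mathcal{C}^1_c\big((-\infty,0)\big)$ rises from $0$ to $1$ on $(x_0,x_0+\delta)$ and falls back to $0$ on $(-\delta,0)$. Note that $\psi_\delta$ is forced to vanish at $x=0$ since its support must lie inside $I_\ell$, and it is precisely the downward ramp on $(-\delta,0)$ that will reproduce the boundary trace at $x=0$. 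Letting first $\sigma\to0$ and using $u_\ell\in\mathbf{C}\big([0,+\infty);\mathbf{L}^1_{\rm loc}\big)$, the term $\int u_\ell\,\phi_\sigma'\psi_\delta$ converges to $\int_{x_0}^0\big(u_\ell(t_1,x)-u_\ell(t_2,x)\big)\psi_\delta(x)\,dx$, while $\int f(u_\ell)\,\phi_\sigma\psi_\delta'$ converges to $\int_{t_1}^{t_2}\!\int f(u_\ell)\,\psi_\delta'\,dx\,dt$ by the boundedness of $f(u_\ell)$. When $t_1=0$ the ramp at $t_1$ degenerates, and the initial--data term in \eqref{entr-weak-sol} supplies the contribution $u_\ell(t_1,\cdot)=\overline u_\ell$.

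Next I would let $\delta\to0$. The spatial term is handled using that $u_\ell(t,\cdot)\in\mathbf{BV_{\rm loc}}$ admits one--sided limits: for each fixed $t$ one has $\int f(u_\ell(t,x))\,\psi_\delta'(x)\,dx\to f\big(u_\ell(t,x_0^+)\big)-f\big(u_\ell(t,0^-)\big)$, the two contributions coming from the rising ramp near $x_0$ (total mass $+1$) and the falling ramp near $0$ (total mass $-1$). Since $f$ is bounded on $\Omega$ and $\int|\psi_\delta'|\,dx\le2$ uniformly in $\delta$, dominated convergence lets me pass this limit through $\int_{t_1}^{t_2}dt$; the spatial term converges to $\int_{x_0}^0\big(u_\ell(t_1,x)-u_\ell(t_2,x)\big)\,dx$ likewise. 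Collecting the terms and using the left--continuous convention $u_\ell(t,0)=u_\ell(t,0^-)$ yields exactly \eqref{div-thm}.

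I expect the delicate point to be the identification of the boundary traces in the $\delta\to0$ limit, and in particular the appearance of the flux trace $f(u_\ell(\cdot,0))$ at $x=0$, a location where the equation is not assumed to hold and where only the relaxed boundary condition is imposed. The saving feature is that this identity never uses the boundary conditions \eqref{Dir-bc-in-flux}--\eqref{Dir-bc-out-flux}: it relies solely on $u_\ell$ being a weak entropy solution, which by the Ole\v{\i}nik--type bounds (guaranteed here by the hypothesis $\overline u_\ell\in\mathbf{BV}$) gives uniform spatial $\mathbf{BV}$ regularity, hence well--defined one--sided limits $u_\ell(t,x_0^\pm)$ and $u_\ell(t,0^-)$ for every $t$, with flux traces lying in $\mathbf{BV_{\rm loc}}$ in time (Remark~\ref{BV-flux}). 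Finally, since the jump set of $u_\ell$ meets the vertical line $\{x=x_0\}$ at most countably many times, one has $u_\ell(t,x_0^+)=u_\ell(t,x_0^-)$ for a.e.\ $t$, which justifies the unoriented notation $x_0^\pm$ appearing in the flux integral.
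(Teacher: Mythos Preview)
Your proof is correct and follows essentially the same strategy as the paper: approximate the characteristic function of the rectangle $(t_1,t_2)\times(x_0,0)$ by smooth test functions, apply the weak formulation, and pass to the limit to recover the four boundary contributions. The paper packages this slightly differently---it invokes the Divergence Theorem for the $\mathbf{BV}$ vector field $(u_\ell\varphi_\nu,\,f(u_\ell)\varphi_\nu)$ directly, with a single sequence $\varphi_\nu\to\chi_D$---whereas you carry out the computation explicitly with product cutoffs and sequential limits $\sigma\to0$, $\delta\to0$; but the substance is the same. One small point: your final remark that ``the jump set of $u_\ell$ meets the vertical line $\{x=x_0\}$ at most countably many times'' is morally right but a bit terse; the paper makes this precise by observing (via Rankine--Hugoniot and Remark~\ref{BV-flux}) that $\{t:\,f(u_\ell(t,x_0^-))\neq f(u_\ell(t,x_0^+))\}$ is contained in the jump set of the $\mathbf{BV}$ function $t\mapsto f(u_\ell(t,x_0))$, hence is a null set.
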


\begin{proof}
  For simplicity we consider only the case $\ell \in \mathcal I$, the other
  case being completely similar; thus $x_0 < 0$.
  Notice first that
  \begin{equation}
    \label{eq:integral-identity}
    \int_{t_1}^{t_2} f(u_\ell(t,x_0^+))dt = \int_{t_1}^{t_2} f(u_\ell(t,x_0^-))dt.
  \end{equation}
  Indeed, by the Rankine-Hugoniot condition, the function 
  $f(u_\ell(t,x))$ does not admit discontinuities of zero slope.
  Moreover if $t_0$ is a point of continuity
  for $t \to f(u_\ell(t,x_0))$, then also $x_0$ must be a point of continuity
  for $x \to f(u_\ell(t_0,x))$. Therefore
  \begin{equation*}
    \left\{ t>0\ | f(u_\ell(t,x_0^-))\neq f(u_\ell(t,x_0^+)) \right\}
    \subset \left\{t>0\ | f(u_\ell(t^+,x_0))\neq f(u_\ell(t^-,x_0)) \right\}.
  \end{equation*}
  The latter set has Lebesgue measure zero,
  since $f(u_\ell (\cdot, x_0))$ is a function of bounded variation
  (see Remark~\ref{BV-flux}).
  Thus also the first set has Lebesgue measure zero,
  which implies~(\ref{eq:integral-identity}).

  Fix now the domain $D \doteq (t_1, t_2) \times (x_0, 0)$ and a sequence
  $\fhi_\nu \in C^1_c \left(\R \times I_\ell; \R\right)$,
  $\nu \in \N$, such that
  $\supp \left(\fhi_\nu\right)\subseteq D$ for every $\nu \in \N$
  and $\fhi_\nu$ converges to the characteristic function of $D$ as
  $\nu \to +\infty$.
  The Divergence Theorem to the BV vector
  field $(u_\ell \fhi_\nu, f(u_\ell) \fhi_\nu)$ on $D$ in combination
  with the integral equality~\eqref{entr-weak-sol} implies that
  \begin{align*}
    0 
    & = \int \int_D \diver \left(u_\ell(t, x) \fhi_\nu(t,x),
      f(u_\ell(t, x)) \fhi_\nu(t,x)\right) dx\, dt
    \\
    & = \int \int_D \diver \left(u_\ell(t, x),
      f(u_\ell(t, x)) \right) \fhi_\nu(t,x) dx\, dt
  \end{align*}
  for every $\nu \in \N$. Passing to the limit as $\nu \to + \infty$, we
  deduce that
  \begin{equation*}
    \int \int_D \diver \left(u_\ell(t, x),
      f(u_\ell(t, x))\right) dx\, dt = 0
  \end{equation*}
  which implies~(\ref{div-thm}).
\end{proof}

Since we consider finite horizon
optimization problems (see Section~\ref{sec:opt}),
we now introduce the concept of solution to the Cauchy problem for a node of
a network on $[0, T]$.
\begin{definition}
  \label{defi}
  Let  $T>0$ and $A(\cdot)=(a_{ji}(\cdot))_{j,i}\in 
  \mathbf{L}^\infty((0,T);\R^{m\times n})$ be a
  matrix valued map satisfying~\eqref{distr-rules}.
  Given $\overline u\in  \Pi_{\ell=1}^{m+n} \ \mathbf{L^\infty}
  (I_\ell;  \Omega)$,
  we say that
  an entropy admissible weak solution to the nodal Cauchy
  problem~\eqref{incoming}-\eqref{outgoing}  on $[0,T]$,
  is a function $u\in \mathbf{C}\big([0, T];\
  \Pi_{\ell=1}^{m+n} \ \mathbf{L}^1_{\rm loc}(I_\ell;  \Omega)\big)$
  that, for some $k\in \mathbf{L^\infty} \big((0,T);  \Omega^{m\!+\!n}\big)$,
  enjoys the following
  properties.
  \begin{enumerate}
  \item for every $i=1,\cdots, m$, the $i$-th component $u_i$ of $u$ is an
    entropy admissible weak solution of~\eqref{eq:conlaw1in} on
    $[0,T]\times I_i$, in the sense of Definition~\ref{def-ent-sol-ibvp};

  \item for every $j=m+1,\cdots, m+n$, the $j$-th component $u_j$ of $u$ is an
    entropy admissible weak solution of~\eqref{eq:conlaw1out} on
    $[0,T]\times I_j$, in the sense of Definition~\ref{def-ent-sol-ibvp};

  \item for a.e. $t\in (0,T]$ 
    condition \eqref{distribution} is satisfied.
  \end{enumerate}
\end{definition}

\section{Optimization problems}\label{sec:opt}

In this section we first introduce the general notation 
of admissible controls and corresponding solutions.
Then we
establish a compactness property for a class of flux-traces of solutions. 
Next, we analyze the optimization problems ({\bf I})-({\bf II})
described in the Introduction.
We shall first consider the maximization of a functional, defined in
equation~\eqref{max-J}, related to the integral of the flux at the
junction. Then, among all solutions which maximize this functional, we
choose the ones whose flux at the junction enjoy minimal total variation
(see the minimization problem~\eqref{min-max-J}). Finally, we provide
also an equivalent variational formulation of this min-max problem in
terms of the functional \eqref{IM-def}, which will be useful in
the numerical analysis of the optimal solutions.

\subsection{General setting}
Fix $T>0$.
Given  $\overline u\in  \Pi_{\ell=1}^{m+n} \ \mathbf{L^\infty}
(I_\ell;  \Omega)$,
for every $i \in \mathcal I$ and $j \in \mathcal O$,
recalling Definition~\ref{def-ent-sol-ibvp}, consider the sets
\begin{equation}
  \label{Fset-def}
  \begin{split}
    \F_{i} \doteq 
    \F_{i}(\overline u_i) 
    & \doteq 
    \left\{f(u_i(\cdot,0))\ \left| \ 
        \begin{array}{c}
          u_i \ \textrm{is a weak entropy admiss. sol. of~\eqref{eq:conlaw1in}
          on } 
          [0,T]\times I_i
          \\
          \textrm{with boundary data} \ k_i\in \mathbf{L^\infty} \big((0,T);  \Omega\big)
        \end{array}
      \right.
    \right\}
    \\
    \F_{j} \doteq \F_{j}(\overline u_j )
    & \doteq \left\{f(u_j(\cdot ,0))\ \left| 
        \begin{array}{c}
          u_j \ \textrm{is a weak entropy admiss. sol.
          of~\eqref{eq:conlaw1out} on } 
          [0,T]\times I_j
          \\
          \textrm{with boundary data} \ k_j\in \mathbf{L^\infty} \big((0,T);  \Omega\big)
        \end{array}
      \right.
    \right\},
  \end{split}
\end{equation}
which consists of the boundary flux-traces of all possible entropy
admissible weak solutions to~(\ref{eq:conlaw1in}) and
to~(\ref{eq:conlaw1out}) with initial data $\overline u_i$, $\overline u_j$, respectively.  
We recall that the flux-traces 
are defined as the one sided-limit $f(u_i(t,0))\doteq \lim_{x\to 0^-}f(u_i(t,x))$
for the incoming arcs $I_i$, $i\in\mathcal{I}$, and as 
the one sided-limit $f(u_j(t,0))\doteq \lim_{x\to 0^+}f(u_j(t,x))$
for the outgoing arcs $I_j$, $j\in\mathcal{O}$.
Then, define the set of admissible matrix valued maps 
fulfilling~\eqref{distr-rules} for a.e. $t$
\begin{align}
  \label{eq:A}
  \mathcal{A} 
    &\doteq \big\{A=(a_{ji}(\cdot))_{j,i}\in \mathbf{BV}([0,T];\R^{m\times n}) 
      \ \big|\ \ 
      \textrm{condition~(\ref{distr-rules})
        holds for a.e. $t\in(0,T]$ } 
    \Big\},
    \\
    \noalign{\smallskip}
  \label{A-M}
  \mathcal{A}^M &\doteq \Big\{A=(a_{ji}(\cdot))_{j,i}\in \mathcal{A} \ | \ \tv\{a_{ji}\}\leq
  M\ \ \ \forall~i,j \Big\}\qquad\quad M>0.
\end{align}

\noindent
\textbf{Admissible controls}.  
As observed in the Introduction,  we may uniquely identify every
admissible solutions of the nodal
Cauchy Problem~\eqref{incoming}-\eqref{outgoing}
by assigning the matrix valued map $A\in\mathcal{A}$ and the $m$-tuple of 
{boundary incoming flux-traces }
$\gamma=\big(f(u_1(\cdot,0)),\cdots, f(u_m(\cdot,0))\big)$
which, in turn, by conditions~\eqref{distribution},
determines also the $n$-tuple of boundary outgoing flux-traces
$(f (u_{m+1}(\cdot,0)),\cdots,f(u_{m+n}(\cdot,0))\big)$.
We then regard $(\gamma,A)$ as a pair of {\it junction controls} 
and we shall 
adopt the following definition.
\begin{definition}
  \label{def-adm-bdr-flux}
  Given $\overline u\in  \Pi_{\ell=1}^{m+n} \ \mathbf{L}^\infty
  (I_\ell;  \Omega)$ and
  $A=(a_{ji}(\cdot))_{j,i}\in \mathcal{A}$,
  we say that
  \begin{equation*}
    \gamma=(\gamma_1,\cdots,\gamma_m)\in  \mathbf{L}^\infty
    \big((0,T);\, \left[f \left(\Omega\right)\right]^m\big)
  \end{equation*}
  is an
  $m$-tuple of $A$-admissible boundary inflow
  controls if there exists a boundary datum
  $k \in \mathbf{L^\infty}\left((0,T); \Omega^{m+n}\right)$ 
  such that
  the entropy admissible weak solutions $u_i$, $i\in \mathcal I$, and $u_j$, $j\in\mathcal J$, 
  of~\eqref{eq:conlaw1in} and of~\eqref{eq:conlaw1out}, respectively, satisfy:
  \begin{enumerate}
  \item $f\left(u_\ell(\cdot, 0\right)) \in \mathcal F_\ell$
    for every $\ell \in \mathcal I \cup \mathcal O$;

  \item $\gamma_i(t) = f(u_i(t, 0))$, for a.e. $t \in [0,T]$
    and all $i \in \mathcal I$,

  \item $\sum_{i=1}^m a_{ji}(t) \, \gamma_i(t) = f(u_j(t, 0))$,
    for a.e. $t \in [0,T]$ and all $j \in \mathcal O$.
  \end{enumerate}
  We denote by $u(\cdot, \cdot;\, A,\gamma)$ the entropy admissible weak
  solution of~\eqref{incoming}-\eqref{outgoing} determined by $\gamma$
  and~$A$.
  The components
  $u_\ell(\cdot, \cdot;\, A,\gamma)$, $\ell \in\mathcal{I}\cup \mathcal{O}$,
  are entropy admissible weak solution
  of \eqref{eq:conlaw1in}, \eqref{eq:conlaw1out},
  with normalized boundary data
  $k\in \mathbf{L^\infty} \big((0,T);  \Omega^{m\!+\!n}\big)$  defined by
  \begin{equation*}
    k_\ell(t)\doteq 
    \begin{cases}
      f_+^{-1}\big(\gamma_\ell(t)\big) & \textrm{if} \quad\ \ell\in\mathcal{I},
      \\
      \noalign{\medskip}
      f_-^{-1}\big(\gamma_\ell(t)\big) & \textrm{if} \quad\ \ell\in\mathcal{O},
    \end{cases}
  \end{equation*}
  where we denote as $f_-, f_+$ the restrictions of $f$
  to the intervals $[0, \theta]$
  and $[\theta, u^{max}]$, respectively.
\end{definition}
\smallskip 
 
 \noindent
We then define the sets of admissible controls as
\begin{equation}
  \label{admiss-controls}
  \begin{split}
    \mathcal{U}\!\doteq\! \mathcal{U}(\overline u)\!\doteq 
    &
    \left\{\!(A,\gamma)\!\in\! \mathcal{A}\!\times\!  \mathbf{L}^1
    ((0,T);\R^{m})
      \ \left|\ \
        \begin{array}{c}
          \gamma  \ \textrm{is an
          $m$-tuple of $A$-admissible}
          \\
          \textrm{
          boundary inflow controls}
        \end{array}
      \right.
    \right\},
    \\
    \mathcal{U}^M\!\doteq\! \mathcal{U}^M(\overline u)\!\doteq
    &
    \left\{\big((a_{ji})_{j,i},\gamma\big)\in \mathcal{U}(\overline u)
      \ \left|\ \
        \begin{array}{ll}
          \tv\{\gamma_i\}\leq M 
          & \forall~i \in \mathcal I
          \\ 
          \tv\{a_{ji}\}\leq  M 
          & 
            \forall~i \in \mathcal I, \forall~j \in \mathcal O
        \end{array}
      \right.
    \right\},
  \end{split}
\end{equation}
for all $M \ge 0$.
\begin{remark}
  \label{nonempty-control-set}
  One can easily verify that the sets of admissible controls defined by~\eqref{admiss-controls} are nonempty. Indeed, 
  given $\overline u\in  \Pi_{\ell=1}^{m+n} \ \mathbf{L}^\infty
  (I_\ell;  \Omega)$, consider the $(m+n)$-tuple of boundary data
  $k_\ell \in \mathbf{L^\infty}\left([0,T]; \Omega\right)$, $\ell =1,\dots, m+n$, defined by
  \begin{equation*}
  \begin{aligned}
  k_i(t)&= u^{max}\qquad \forall~t\in [0,T]\,,\qquad \forall~i\in\mathcal I\,,
  \\
  \noalign{\smallskip}
  k_j(t)&= 0\qquad\quad\ \ \forall~t\in [0,T]\,,\qquad \forall~j\in\mathcal O\,,
  \end{aligned}
  \end{equation*}
  and let $u_i, i\in \mathcal I$, $u_j, j\in\mathcal O$, denote the corresponding
  entropy admissible weak solution of~\eqref{eq:conlaw1in} and \eqref{eq:conlaw1out}, respectively.
  Observe that, because of the assumption {\bf (A)} on the flux function, the boundary conditions~\eqref{Dir-bc-in-flux},
  \eqref{Dir-bc-out-flux} imply
  \begin{equation*}
    u_\ell(t,0) \in\{0, u^{max}\}\qquad \forall~t\in [0,T]\,,\qquad
    \forall~\ell\in\mathcal I\cup\mathcal O\,.
  \end{equation*}
   Therefore,  we have
  \begin{equation*}
  \begin{aligned}
  \gamma_i(t)\doteq f(u_i(t,0))&=0 \qquad \forall~t\in [0,T]\,,\qquad \forall~i\in\mathcal I
  \\
  \noalign{\smallskip}
   f(u_j(t,0))&=0 \qquad \forall~t\in [0,T]\,,\qquad \forall~j\in\mathcal O\,,
   \end{aligned}
  \end{equation*}
  which shows that the $m$-tuple $\gamma\doteq (\gamma_1,\dots,\gamma_m)$
  satisfies condition \textup{(3)} of
  Definition~\ref{def-adm-bdr-flux}, for every matrix $A\doteq (a_{ji}(\cdot))_{j,i}\in \mathcal{A}$,
 or $A\doteq (a_{ji}(\cdot))_{j,i}\in \mathcal{A}^M$, and that $ \tv\{\gamma_i\}\leq M$ for any $i \in \mathcal I$.
This proves that if $A\in\mathcal A$ one has $\big(A,\,\gamma\big)\in \mathcal U(\overline u)$,
 while if $A\in\mathcal A^M$ one has $\big(A,\,\gamma\big)\in \mathcal U^M(\overline u)$,
 for every $M>0$.
  \end{remark}

\noindent
\textbf{Admissible flux traces}.  
 In connection with the above sets of admissible controls 
we introduce now the classes of flux-traces of solutions that we will
analyze in the optimization problems considered in 
this paper. Namely, we first define the set of all junction flux-traces 
in the incoming arcs for the  entropy weak solutions of~\eqref{incoming}-\eqref{outgoing}
associated to the above classes of admissible controls:
\begin{align}
  \label{eq:G}
  \mathcal{D} 
  & \doteq \mathcal{D}(\overline u) \doteq
    \left\{(g_1,\dots,g_m)\in \mathbf{L}^1((0,T);\R^m)
    \ \left|
    \ \ 
    \begin{array}{l}
      \exists~(A,\gamma)\in\mathcal{U}(\overline u)
      \\
      \textrm{s.t.}
      \ \ \,g_i= f(u_i(\cdot,0;\, A,\gamma)) \ \ \
      \forall  \ i\in\mathcal{I} 
    \end{array}
  \right.
  \right\},
  \\
  \label{g-M}
  \mathcal{D}^M 
  & \doteq \!\mathcal{D}^M(\overline u)\!\doteq\!
    \left\{(g_1,\dots,g_m)\in \mathbf{L}^1((0,T);\R^m)\
    \left| \ \ 
    \begin{array}{l}
      \exists~(A,\gamma)\in\mathcal{U}^M(\overline u)
      \\
      \textrm{s.t.}
      \ \ \,g_i= f(u_i(\cdot,0;\, A,\gamma)) \ \ \
      \forall  \ i\in\mathcal{I}
    \end{array}
  \right.
  \right\}.
\end{align}

Next, in view of considering more general optimization problems,
see Remark~\ref{opt-funct-outgoing}, 
for every given $x_j\in I_j$, $j\in\mathcal{O}$, we consider the sets
\begin{equation}
  \label{f-M-x}
  \F_{j,x_j} \doteq \F_{j,x_j}(\overline u_j )
  \doteq \left\{f(u_j(\cdot ,x_j))\ \left| 
      \begin{array}{l}
        u_j \ \textrm{is a weak entropy admiss. sol.
        of~\eqref{eq:conlaw1out}} 
        \\
        \noalign{\medskip}
        \textrm{on } [0,T]\times I_j \
        \textrm{ for some} \ \ k_j\in \mathbf{L^\infty} \big((0,T);  \Omega\big)
      \end{array}
    \right.
  \right\}.
\end{equation}
Then, given every $x^\mathcal{O}\doteq (x_{m+1},\dots,x_{m+n})\in [0,+\infty)^n$,
we define
the sets of flux-traces of solutions evaluated at the points $x_j$
of the outgoing arcs:
\begin{align}
  \label{eq:G-x}
  \mathcal{G}_{x^\mathcal{O}} 
  & \doteq \mathcal{G}_{x^\mathcal{O}}(\overline u) \doteq
    \left\{(g_{m+1},\dots,g_{m+n})\in \prod_{j=m+1}^{m+n} \F_{j,x_j}\ 
    \left|
    \begin{array}{l}
      \exists~(A,\gamma)\in\mathcal{U}(\overline u)
      \ \ \ \ \text{s.t.}
      \\
      g_j= f(u_j(\cdot,x_j;\, A,\gamma)) \
      \forall \ j\in\mathcal{O}
    \end{array}
  \right.
  \right\},
  \\
  \label{g-M-x}
  \mathcal{G}^M_{x^\mathcal{O}} 
  & \doteq \mathcal{G}^M_{x^\mathcal{O}}(\overline u) \doteq
    \left\{(g_{m+1},\dots,g_{m+n})\in \prod_{j=m+1}^{m+n} \F_{j,x_j}\ \left|
    \begin{array}{l}
      \exists~(A,\gamma)\in\mathcal{U}^M(\overline u)
      \ \ \ \ \text{s.t.}
      \\
      g_j= f(u_j(\cdot,x_j;\, A,\gamma)) \
      \forall \ j\in\mathcal{O}
    \end{array}
  \right.
  \right\}.
\end{align}
\subsection{Compactness of admissible flux traces}
We provide here
the compactness of the sets $\mathcal{D}^M$, $\mathcal{G}^M_{x^\mathcal{O}}$  
with respect to the $\mathbf{L}^1$
topology, which is the standard setting for one dimensional
conservation laws.
With the same techniques, one can achieve compactness in $\mathbf{L}^p$,
with $p > 1$, if required by particular models. 
This property then yields the existence of optimal solutions for cost functionals  
depending on the  flux-traces  of solutions to
the nodal Cauchy Problem~\eqref{incoming}-\eqref{outgoing}
evaluated at the intersection $x=0$ or at points $x_j$ of the outgoing arcs $I_j$, $j\in\mathcal{O}$.

\begin{theorem}
  \label{compactness-G-M}
  Fix $\overline u\in  \Pi_{\ell=1}^{m+n} \  \mathbf{L}^\infty
  (I_\ell;  \Omega)$.
  Then, for every $M>0$ the sets $\mathcal{D}^M$ ,
  $\mathcal{G}^M_{x^\mathcal{O}}$ 
  in~\eqref{g-M}, \eqref{g-M-x} are compact
  subsets of $\mathbf{L}^1((0,T);\R^m)$, $\mathbf{L}^1((0,T);\R^{n})$,
  respectively.
\end{theorem}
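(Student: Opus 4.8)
The plan is to establish compactness in $\mathbf{L}^1$ via Helly's theorem (the compact-embedding $\mathbf{BV}\cap\mathbf{L}^\infty\hookrightarrow\mathbf{L}^1$), which requires two ingredients: a uniform $\mathbf{L}^\infty$ bound together with a uniform total-variation bound on the elements of each set, and closedness of the set under $\mathbf{L}^1$ convergence. I treat $\mathcal{D}^M$ first, the argument for $\mathcal{G}^M_{x^{\mathcal O}}$ being entirely analogous (evaluating fluxes at the interior points $x_j$ rather than at $x=0$). The uniform sup-bound is immediate: every component $g_i$ takes values in the compact set $f(\Omega)$, so $\|g_i\|_{\mathbf{L}^\infty}\le\max_{u\in\Omega}f(u)$.

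The crucial step is the uniform $\tv$ bound. Fix $(A,\gamma)\in\mathcal{U}^M$ and consider an incoming flux-trace $g_i=f(u_i(\cdot,0;A,\gamma))=\gamma_i$. By the definition of $\mathcal{U}^M$ in~\eqref{admiss-controls} we have $\tv\{\gamma_i\}\le M$ directly, so the incoming traces are uniformly bounded in variation by $M$. For the outgoing case needed for $\mathcal{G}^M_{x^{\mathcal O}}$, the outgoing boundary flux-trace is $f(u_j(\cdot,0))=\sum_{i=1}^m a_{ji}\gamma_i$ by condition~(3) of Definition~\ref{def-adm-bdr-flux}; since $\mathbf{BV}$ is an algebra, I would bound $\tv\{\sum_i a_{ji}\gamma_i\}$ using the product rule, controlling it by $\sum_i(\|a_{ji}\|_{\mathbf{L}^\infty}\tv\{\gamma_i\}+\|\gamma_i\|_{\mathbf{L}^\infty}\tv\{a_{ji}\})$, which is $\le m\,M(1+\max f)$ thanks to the constraints $0\le a_{ji}\le1$, $\tv\{a_{ji}\}\le M$, $\tv\{\gamma_i\}\le M$. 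To pass from the boundary trace at $x=0$ to the trace at an interior point $x_j$, I would invoke Remark~\ref{BV-flux}, which gives Ole\v{\i}nik-type one-sided bounds on the variation of $t\mapsto f(u_j(t,x_j))$; combined with the divergence identity of Proposition~\ref{divprop} relating the integrals of the traces at $0$ and at $x_j$ through the (uniformly bounded, by the Ole\v{\i}nik BV bound) initial-data mass, this yields a uniform variation bound at $x_j$ depending only on $M$, $T$, $\tv\{\overline u_j\}$ and $f$.

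With a uniform $\mathbf{L}^\infty$ bound and a uniform $\tv$ bound in hand, Helly's selection theorem produces, from any sequence in $\mathcal{D}^M$ (resp. $\mathcal{G}^M_{x^{\mathcal O}}$), a subsequence converging in $\mathbf{L}^1$ to some limit $g^\star$. The remaining task is to show $g^\star$ lies in the same set, i.e. that the set is \emph{closed}. This is the part I expect to be the main obstacle. Given a sequence $(A^{(k)},\gamma^{(k)})\in\mathcal{U}^M$ with $g_i^{(k)}\to g_i^\star$ in $\mathbf{L}^1$, I would extract (again by Helly, using the uniform $\tv\le M$ bounds on the $a_{ji}^{(k)}$ and $\gamma_i^{(k)}$) $\mathbf{L}^1$-limits $a_{ji}^\star$ and $\gamma_i^\star=g_i^\star$, with $A^\star\in\mathcal{A}^M$ since the constraints~\eqref{distr-rules} and $\tv\le M$ are all preserved under $\mathbf{L}^1$/a.e. limits (pointwise a.e. convergence preserves the pointwise inequalities, and lower semicontinuity of total variation preserves $\tv\le M$). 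The delicate point is verifying that the limit pair $(A^\star,\gamma^\star)$ is \emph{admissible} in the sense of Definition~\ref{def-adm-bdr-flux}: one must recover a limiting boundary datum $k^\star$ and a limiting entropy solution $u^\star$ realizing these traces. Here I would use the well-posedness and $\mathbf{L}^1$-stability of the mixed initial–boundary value problem (Definition~\ref{def-ent-sol-ibvp} and the normalization via $f_\pm^{-1}$ in Definition~\ref{def-adm-bdr-flux}): the normalized boundary data $k_i^{(k)}=f_+^{-1}(\gamma_i^{(k)})$ converge in $\mathbf{L}^1$ because $f_+^{-1}$ is continuous on $f(\Omega)$, the corresponding entropy solutions $u_i^{(k)}$ converge to the entropy solution $u_i^\star$ with boundary datum $f_+^{-1}(\gamma_i^\star)$, and their flux-traces converge to $f(u_i^\star(\cdot,0))=\gamma_i^\star$, so that condition~(2) holds for the limit. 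Passing to the limit in the linear relation $\sum_i a_{ji}^{(k)}\gamma_i^{(k)}=f(u_j^{(k)}(\cdot,0))$ — using $\mathbf{L}^1$ convergence of the $\gamma_i^{(k)}$ and the uniform bound on $a_{ji}^{(k)}$ — recovers condition~(3), establishing $(A^\star,\gamma^\star)\in\mathcal{U}^M$ and hence $g^\star\in\mathcal{D}^M$. The main subtlety to handle carefully is the stability of flux-traces under the $\mathbf{L}^1$-convergence of solutions, which is where the BV bounds from Remark~\ref{BV-flux} and the trace-continuity afforded by Proposition~\ref{divprop} do the essential work.
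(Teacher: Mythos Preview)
Your approach to $\mathcal{D}^M$ is essentially the paper's: uniform $\mathbf{L}^\infty$ and $\tv$ bounds give precompactness via Helly, and the substantive work is the closedness, for which you correctly identify the stability of flux-traces as the key issue and point to Proposition~\ref{divprop} as the tool. The paper carries this out by showing that convergence of the normalized boundary fluxes implies $\mathbf{L}^1_{\rm loc}$ convergence of the solutions (via Lipschitz dependence on the boundary flux, citing~\cite{am1}), and then uses the divergence identity together with the observation that, far enough from $x=0$, all the $u_i^\nu$ coincide (backward characteristics reach the initial line without touching the boundary) to conclude $\int_{t_1}^{t_2}f(u_i^\nu(t,0))\,dt\to\int_{t_1}^{t_2}f(u_i(t,0))\,dt$ for all $t_1,t_2$.

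There is, however, a genuine gap in your treatment of $\mathcal{G}^M_{x^{\mathcal O}}$ at interior points $x_j>0$. You propose to obtain a \emph{uniform} bound on $\tv\{f(u_j(\cdot,x_j))\}$ by combining the Ole\v{\i}nik-type one-sided estimate of Remark~\ref{BV-flux} with the divergence identity of Proposition~\ref{divprop}. But Proposition~\ref{divprop} relates the \emph{time-integrals} of the flux-traces at $0$ and at $x_j$, not their total variations, so it cannot supply the missing one-sided bound; and the Ole\v{\i}nik decay controls only one sign of the variation and is not uniform down to $t=0$. The paper sidesteps this entirely: it does \emph{not} establish a uniform $\tv$ bound at $x_j$. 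Instead it first proves compactness of $\mathcal{G}^M_{0_n}$ (traces at $x=0$, where your $\mathbf{BV}$-algebra bound on $\sum_i a_{ji}\gamma_i$ does apply), then for a sequence in $\mathcal{G}^M_{x^{\mathcal O}}$ extracts an $\mathbf{L}^1$-convergent subsequence of the boundary traces at $x=0$ together with $\mathbf{L}^1_{\rm loc}$-convergent solutions, and finally applies Proposition~\ref{divprop} to transfer the convergence of $\int_{t_1}^{t_2}f(u_j^{\nu}(t,0))\,dt$ into convergence of $\int_{t_1}^{t_2}f(u_j^{\nu}(t,x_j))\,dt$ for all $t_1,t_2$. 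This yields $\mathbf{L}^1$-convergence of the traces at $x_j$ without any $\tv$ control there.
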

\begin{proof}
We shall first show the compactness of $\mathcal{D}^M$ and then derive as a consequence
the compactness of $\mathcal{G}^M_{x^\mathcal{O}}$.
\medskip

\noindent{\bf 1.}
Observe that the set $\mathcal A^M$ in~(\ref{A-M}) is compact in the $\mathbf{L}^1$
  topology by Helly's  Theorem. 
  On the other hand, setting
  \begin{equation}
  \label{FMset-def}
   \F_{\ell}^M \doteq
   \Big\{
  f(u_\ell(\cdot,0))\in \F_{\ell} \ | \ \tv \big\{f\big(u_\ell(\cdot,0)\big)
  \big\} \leq M
  \Big\},\qquad \ell\in\mathcal{I}\cup\mathcal{O}\,,
  \end{equation}
  where $ \F_{\ell}$ are the sets defined in~\eqref{Fset-def},
  by Definition~\ref{def-adm-bdr-flux} and
  definitions~\eqref{Fset-def}, \eqref{admiss-controls}, \eqref{g-M}, we have
  \begin{equation*}
  \mathcal{D}^M =
  \bigg\{(g_1,\dots,g_m)\in \prod_{i=1}^m \F_{i}^M\ \
     \big| \ \ \exists~(a_{ji})_{ji}\in\mathcal{A}^M\quad\text{s.t.}\quad
   \sum_{i=1}^m a_{ji}\,g_i\in \F_j \ \ \
      \forall~j\in\mathcal{O} \bigg\}.
  \end{equation*}
  Since $\mathcal A^M$ is compact, in order to establish
  the compactness of $\mathcal{D}^M$ 
  it will be sufficient to show that the sets
  $\mathcal{F}^M_i$, $i \in \mathcal I$,
  are compact and that the sets $\mathcal{F}_j$,
  $j \in \mathcal O$, are closed with respect to the $\mathbf{L}^1$
  topology. We shall provide only the proof of the compactness of the set 
  $\mathcal{F}^M_i$, $i \in \mathcal I$, in~\eqref{FMset-def}, 
  the proof of the
  closureness of the sets $\mathcal{F}_j$, $j \in \mathcal O$,
  being entirely similar.
  \medskip

  \noindent{\bf 2.}
  Fix $i \in \mathcal I$ and
  let $(g^\nu)_\nu$ be a sequence in
  $\mathcal{F}^M_i$. Then, for every $\nu$ one has
  \begin{equation}
    \label{gnu-ip}
    g^\nu (t)= f\big(u_i^\nu(t,0)\big)
    \qquad a.e. ~t\in [0,T]\,,
  \end{equation}
  for some weak entropy admissible 
  solution $u_i^\nu$  to~\eqref{eq:conlaw1in} 
  with boundary data $k_i^\nu\in \mathbf{L^\infty} \big((0,T);  \Omega\big)$.
  Moreover, 
  there holds
  \begin{equation*}
    \tv\big\{ g^\nu\big\}\leq M
    \qquad \forall~\nu\,.
  \end{equation*}
  Since all $g^\nu$ take values in the bounded set $f(\Omega)$,
  applying Helly's Compactness Theorem we deduce that there exists
  a  function $\psi\in \mathbf{BV}([0,T];\, f\left(\Omega\right))$,
  with 
  \begin{equation}
  \label{tvboundpsi}
  \tv\{\psi\} \leq M, 
  \end{equation}
  so that (up to a subsequence) there holds
  \begin{equation}
    \label{flux-trace-conv}
    \begin{aligned}
       g^\nu \quad &\to \quad \psi
    \end{aligned}
    \qquad\textrm{in}\quad \mathbf{L}^1((0,T);\R)\,.
  \end{equation}
  On the other hand, consider the map
  $\widetilde k_i\in \mathbf{L^\infty} \big([0,T];  \Omega\big)$
  defined by
  \begin{equation}
  \label{tildek-def}
  \widetilde k_i(t) = f_+^{-1}(\psi(t))
  \qquad \forall~t\in[0,T]\,.
  \end{equation}
  %
  Then, letting $u_i\in\mathbf{C}\big([0,T];\, \mathbf{L}^1((-\infty, 0);
  \Omega)\big)$
  be the entropy admissible weak solution of~\eqref{eq:conlaw1in}
  with boundary data $\widetilde k_i$, by definition~\eqref{Fset-def} we have
  $f(u_i(\cdot,0))\in \mathcal{F}_i$.
  Therefore,
  in order to
  complete the proof of the compactness of $\mathcal F_i^M$ it will be
  sufficient to show that (up to a subsequence)
  \begin{equation}
    \label{flux-trace-conv-3}
    \begin{aligned}
      f(u_i^\nu(\cdot ,0)) \quad &\to \quad f(u_i(\cdot ,0)),
    \end{aligned}
    \qquad\textrm{in}\quad \mathbf{L}^1((0,T);\R).
  \end{equation}
In fact, \eqref{gnu-ip}, \eqref{flux-trace-conv},
\eqref{flux-trace-conv-3}
together imply
\begin{equation}
 \label{flux-trace-conv-3b}
  f(u_i(t ,0))=\psi (t) \qquad \text{for \ a.e.} \ \ t\in [0,T]\,.
\end{equation}
Hence, recalling Remark~\ref{rmk:tv_right_cont}, we deduce from~\eqref{tvboundpsi}, \eqref{flux-trace-conv-3b},  
that the essential variation of $\psi$ satisfies the bound
  \begin{equation*}
   \tv \big\{f\big(u_i(\cdot,0) \big)\big\}= \tv\{\psi\} \leq M\,,
  \end{equation*}
which  implies  $f(u_i(\cdot,0))\in \mathcal{F}^M_i$ by definition~\eqref{FMset-def}.    
  \medskip

\noindent{\bf 3.}
Towards a proof of~\eqref{flux-trace-conv-3} observe first that,
 as recalled in Subsection~\ref{dir-jct-problems},
  we may  identify $u_i^\nu$ as the entropy admissible 
  solution to~\eqref{eq:conlaw1in} with normalized boundary data
  \begin{equation*}
  \widetilde k_i^\nu(t) = \max\{u_i^\nu(t,0), \pi(u_i^\nu(t,0))\} \qquad \forall~t\in [0,T]\,,
  \qquad\forall~\nu\,.
  \end{equation*}
  Then, by~\eqref{gnu-ip}, one has
  \begin{equation}
  \label{normal-bdry-sol-trace-2}
  f\big(\,\widetilde k_i^\nu(t)\big) = g^\nu(t)\qquad \forall~t\in [0,T],
  \qquad\forall~\nu\,.
  \end{equation}
  Hence,  \eqref{flux-trace-conv}, \eqref{tildek-def}, \eqref{normal-bdry-sol-trace-2} together imply
  \begin{equation}
    \label{flux-trace-conv-2}
    \begin{aligned}
      f\big(\widetilde k_i^\nu\big) \quad &\to \quad f\big(\widetilde k_i\big)
         \end{aligned}
    \qquad\textrm{in}\quad \mathbf{L}^1((0,T);\R).
  \end{equation}
  Therefore, 
  by virtue of the $\mathbf{L}^1$ Lipschitz
  continuous dependence of the solution to~\eqref{eq:conlaw1in} on the
  flux of the normalized boundary data (cfr.~\cite[Theorem~4]{am1}), and because
  of~\eqref{flux-trace-conv-2}, one has
  \begin{equation}
    \label{sol-conv}
    u_i^\nu(t,\cdot) \quad \to \quad u_i(t,\cdot)
    \qquad\textrm{in}\quad \mathbf{L}^1_{loc}(I_i; \Omega)
    \qquad\forall~t\in [0,T].
  \end{equation}
   On the other hand, recalling that $\Omega=[0, u^{max}]$,
 notice that, for any solution to~\eqref{eq:conlaw1in}, the backward generalized characteristics (see~\cite{d}) issuing from points $(t,x)$,
   $x< f'(u^{max})  \cdot T$,
$t\in [0,T]$,  reach the $x$-axis without crossing the boundary
  line $x=0$. Therefore, all solutions $u_i^\nu$ are uniquely determined by the 
  the initial data~$\overline u$ on the region $[0,T]\times (-\infty, f'(u^{max})  \cdot T)$.
  Hence, one has 
  \begin{equation}
    \label{u=unu}
    u_i^\nu(t,x)=u_i(t,x)\qquad\forall~t  \in [0,T], \ x<f'(u^{max})
    \cdot T,\qquad\forall~\nu.
  \end{equation}
Hence, invoking Proposition~\ref{divprop} for every $u_i^\nu$,
and and thanks to~\eqref{sol-conv}, \eqref{u=unu},
we find that for all  $x_0< f'(u^{max})
\cdot T$ and  $t_1, t_2\in (0,T)$, there holds 
  \begin{equation}
    \label{div-thm-nu}
    \begin{aligned}
\lim_\nu  \int_{t_1}^{t_2} f(u_i^\nu(t,0))dt
      &= \lim_\nu\bigg[\int_{t_1}^{t_2} f(u_i^\nu(t,x_0))dt
      +\int_{x_0}^0 u_i^\nu(t_1,x)dx- \int_{x_0}^0
      u_i^\nu(t_2,x)dx\bigg]
      \\
      &= \int_{t_1}^{t_2} f(u_i(t,x_0))dt+\int_{x_0}^0
      u_i(t_1,x)dx- \int_{x_0}^0 u_i(t_2,x)dx
      \\
      &= \int_{t_1}^{t_2} f(u_i(t,0))dt.
    \end{aligned}
  \end{equation}
By the arbitrariness of $t_1, t_2 \in (0,T)$, and since all $f(u_i^\nu(\cdot ,0))$
take values in the bounded set $f(\Omega)$, we recover from~\eqref{div-thm-nu} the
convergence~\eqref{flux-trace-conv-3} completing the proof of the compactness of $\mathcal{D}^M$.
\medskip

\noindent{\bf 4.}
With the same analysis of the previous points we deduce the compactness of the set
$\mathcal{G}^M_{x^\mathcal{O}}$ when $x^\mathcal{O} = 0_n\doteq (0, \dots, 0)\in\R^n$.
Next, consider $x^\mathcal{O} = (x_{m+1},\dots,x_{m+n}) \neq 0_n$
and observe that by definitions~\eqref{f-M-x}, \eqref{eq:G-x}, \eqref{FMset-def}, we have
\begin{equation}
  \label{DM-def-4}
  \mathcal{G}^M_{x^\mathcal{O}} =
  \left\{
    (g_{m+1}, \cdots\!, g_{m+n}) \in \mathbf{L}^1((0,T);\R^n)\ \left|
      \begin{array}{l}
        \forall~j\in\mathcal{O}\ \ \exists~u_j 
        \textrm{ weak entr. admiss. soln. of~\eqref{eq:conlaw1out}}
        \vspace{.1cm} \\
        \textrm{on } [0,T] \!\times\! [0,+\infty) \ \textrm{ for some} \ \
        k_j\!\in\! \mathbf{L^\infty} \big((0,T); \! \Omega\big)
        \vspace{.1cm} 
        \\
        \textrm{ s.t.} \ \ g_j= f(u_j(\cdot,x_j))
         \ \ \text{and}
        \vspace{.1cm} \\
        \big(f(u_{m+1}(\cdot,0)), \dots, f(u_{m+n}(\cdot,0))\big)\in 
        \mathcal{G}^M_{0_n}
      \end{array}\!\!\!\!
    \right.
  \right\}.
\end{equation}
Then, letting $(g^\nu)_\nu$ be a sequence in $\mathcal{G}^M_{x^\mathcal{O}}$,
there will be 
a sequence $(u^\nu)_\nu$ of $n$-tuples of  entropy admissible weak solutions
of~\eqref{eq:conlaw1out}
with boundary data $k^\nu\in \mathbf{L^\infty} \big((0,T);  \Omega^n\big)$,
so that, for every $\nu$, one has
\begin{equation}
  \label{gnu-fnu-x-0}
  g_j^\nu = f(u_j^\nu(\cdot, x_j))
  \quad \forall~j\in\mathcal{O}, \qquad
  \big(f(u_{m+1}^\nu(\cdot,0)), \dots, f(u_{m+n}^\nu(\cdot,0))\big)\in 
  \mathcal{G}^M_{0_n} \,.
\end{equation}
By the compactness of $\mathcal{G}^M_{0_n}$ 
and relying on the analysis performed at previous points,
it follows that there exists an $n$-tuple $(u_{m+1},\dots,u_{m+n})$ of
entropy admissible weak solutions of~\eqref{eq:conlaw1out}
with boundary data $k\in \mathbf{L^\infty} \big([0,T];  \Omega^n\big)$,
such that 
\begin{equation}
  \label{limit-in-GM}
  \big(f(u_{m+1}(\cdot,0)), \dots, f(u_{m+n}(\cdot,0))\big)\in 
  \mathcal{G}^M_{0_n}
\end{equation}
and a subsequence
$\big((f(u_{m+1}^{\nu'}(\cdot,0)), \dots, f(u_{m+n}^{\nu'}(\cdot,0)))\big)_{\nu'}$
so that there holds
\begin{equation*}
  \begin{gathered}
 f(u_j^{\nu'}(\cdot ,0)) \quad \to \quad f(u_j(\cdot ,0)),
    \qquad\textrm{in}\quad \mathbf{L}^1((0,T);\R),
    \\
    \noalign{\medskip}
    u_j^{\nu'}(t,\cdot) \quad \to \quad u_j(t,\cdot)
    \qquad\textrm{in}\quad \mathbf{L}^1_{loc}(I_j; \Omega)
    \qquad\forall~t\in [0,T],
\end{gathered}
\qquad\quad\forall~j\in\mathcal{O}\,.
\end{equation*}
Hence, invoking Proposition~\ref{divprop} for every $u_j^{\nu'}$,
we find
  \begin{equation}
    \label{div-thm-nu-2}
    \begin{aligned}
\lim_{\nu'}  \int_{t_1}^{t_2} f(u_j^{\nu'}(t,x_j))dt
      &= \lim_{\nu'}\bigg[\int_{t_1}^{t_2} f(u_j^{\nu'}(t,0))dt
      +\int_0^{x_j} u_j^{\nu'}(t_1,x)dx- \int_0^{x_j}
      u_j^{\nu'}(t_2,x)dx\bigg]
      \\
      &= \int_{t_1}^{t_2} f(u_j(t,0))dt+\int_0^{x_j}
      u_j(t_1,x)dx- \int_0^{x_j} u_j(t_2,x)dx
      \\
      &= \int_{t_1}^{t_2} f(u_j(t,x_j))dt.
    \end{aligned}
  \end{equation}
By the arbitrariness of $t_1, t_2 \in (0,T)$, and since all $f(u_i^{\nu'}(\cdot ,x_j))$
take values in the bounded set $f(\Omega)$, we derive from~\eqref{gnu-fnu-x-0}, \eqref{div-thm-nu-2} the
convergence 
  \begin{equation*}
    \begin{aligned}
      g_j^{\nu'} \quad &\to \quad f(u_j(\cdot ,x_j)),
    \end{aligned}
    \qquad\textrm{in}\quad \mathbf{L}^1((0,T);\R)
    \qquad\forall~j\in\mathcal{O}.
  \end{equation*}
On the other hand, because of~\eqref{DM-def-4}, \eqref{limit-in-GM}, 
we have
\begin{equation*}
\big(f(u_{m+1}(\cdot,x_j)), \dots, f(u_{m+n}(\cdot,x_))\big)\in 
      \mathcal{G}^M_{x^\mathcal{O}},
\end{equation*}
completing the proof of the compactness of $ \mathcal{G}^M_{x^\mathcal{O}}$.
\end{proof}
\begin{remark}
  \label{pointw-conv-flux}
  By the proof of Theorem~\ref{compactness-G-M}
  and relying on Helly's compactness theorem it follows that,
  if $\{\big(A^\nu,f(u_1^\nu(\cdot,0)),\dots f(u_m^\nu(\cdot,0))\big)\}_\nu$
  is a sequence
  in $\mathcal{A}^M \times\mathcal{D}^M$,
  then letting $\psi\in \mathbf{BV}([0,T];\, f\left(\Omega\right))$
  be a map such that
  \begin{equation*}
    f(u_i^\nu(\cdot ,0)) \quad \to \quad \psi
    \qquad\textrm{in}\quad \mathbf{L}^1((0,T);\R)\,,
  \end{equation*}
  and denoting with $u_i$, $i\in\mathcal{I}$,
  the entropy admissible weak solutions of~\eqref{eq:conlaw1in}
  with boundary data $\widetilde k_i$
  defined according with~\eqref{tildek-def},
  there exist $A\in\mathcal{A}^M$ so that
  (up to a subsequence) one has
  \begin{equation*}
    A^\nu(t) \quad \to \quad A(t)
    \qquad\quad \textrm{for \ a.e.} \ \ t\in[0,T]\,,
  \end{equation*}
  and
  \begin{equation*}
    f(u_i^\nu(t ,0)) \quad \to \quad f(u_i(t ,0))\qquad
    \text{for \ a.e.} \ \ t\in [0,T]\,.
  \end{equation*}
\end{remark}
%

%
\begin{remark}
  \label{lowersemic-totvar}
  We underline that, in order to achieve the compactness of
  a set of flux-traces,
  one can alternatively consider
  a class of admissible controls defined as a set of uniformly
  $\mathbf{BV}$ bounded boundary data. 
  Unfortunately this choice makes the analysis more involved, due to the
  lack of convergence of the trace of solutions.
  In fact,  
  if we consider a sequence $(u^\nu)_\nu$ of solutions to~\eqref{eq:conlaw1in}
  that converge in $\mathbf{L}^1_{loc}$  to some solution $u$,
  then the boundary traces of $u^\nu$ do not converge in general to the
  boundary trace of the limiting solution $u$.

  As an example, given the flux function $f(u)=u(1-u)$, $u\in [0,1]$, 
  consider the solutions $u_i^\nu$ to~\eqref{eq:conlaw1in} with initial data 
  \begin{equation}
    \label{indata-nonconv-trace}
    \overline u_i(x) = 
    \begin{cases}
      \frac{1}{8}\quad &\text{if}\quad x<-1\,,
      \\
      \noalign{\smallskip}
      \frac{1}{4}\quad &\text{if}\quad -1<x<0\,,
    \end{cases}
  \end{equation}
  and boundary data
  \begin{equation*}
    k_i^\nu(t) = \frac{3}{4}+\frac{1}{\nu}\qquad\forall~t\,,
    \quad\forall~\nu>8\,.
  \end{equation*}
  By a direct computation, see Figure~\ref{fig:rmk_3.3},
  one can verify that 
  \begin{equation}
    \label{uinu-sol-ex-nonconv-traces}
    u_i^\nu(t,x)=
    \begin{cases}
      \ \frac{1}{8}\qquad\text{if}\qquad 
      \begin{cases}
        x< -1+\frac{5t}{8}, \qquad t\le \frac{8\nu}{5\nu+8}&
        \\
        \noalign{\smallskip}
        \qquad\qquad\  \text{or}&
        \\
        \noalign{\smallskip}
        x< -\frac{8}{5\nu+8}+\frac{(\nu-8)\big((5\nu+8)t-8\nu\big)}
        {8\nu(5\nu+8)},& \frac{8\nu}{5\nu+8}\le t\le
        \frac{8\nu^2}{(5\nu+8)(\nu-8)}
        \\
        \noalign{\smallskip}
        \qquad\qquad\  \text{or}&
        \\
        \noalign{\smallskip}
        x<0, \qquad t\ge \frac{8\nu^2}{(5\nu+8)(\nu-8)}&
      \end{cases}
      \\
      \noalign{\bigskip}
      \ \frac{1}{4}\qquad\text{if}\qquad 
      -1+\frac{5t}{8} <x< -\frac{t}{\nu}, \qquad t< \frac{8\nu}{5\nu+8}
      \\
      \noalign{\bigskip}
      \ \frac{3}{4}+\frac{1}{\nu}\quad\text{if}\quad 
      \begin{cases}
        -\frac{t}{\nu}<x< 0, \qquad t< \frac{8\nu}{5\nu+8}&
        \\
        \noalign{\smallskip}
        \qquad\qquad\  \text{or}&
        \\
        \noalign{\smallskip}
        \frac{(\nu-8)\big((5\nu+8)t-8\nu\big)}{8\nu(5\nu+8)}
        -\frac{8}{5\nu+8}< \! x \! <0,& \! \frac{8\nu}{5\nu+8}< \! t \!<
        \frac{8\nu^2}{(5\nu+8)(\nu-8)}\,.
      \end{cases}
    \end{cases}
  \end{equation}
  \begin{figure}
    \centering
    \includegraphics[]{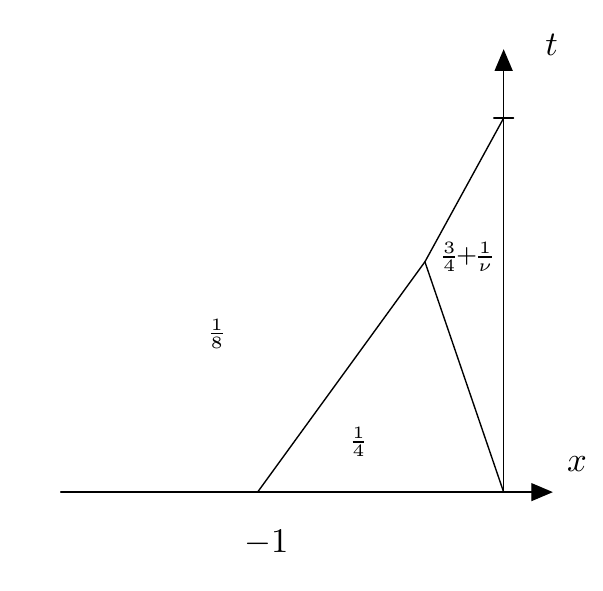}
    \caption{The representation of the function $u_i^\nu$
      in (\ref{uinu-sol-ex-nonconv-traces}) of Remark~\ref{lowersemic-totvar}.}
    \label{fig:rmk_3.3}
  \end{figure}
  The sequence $(u_i^\nu)_\nu$ converges in
  $\mathbf{L}^1_{loc}\big((0,+\infty)\times(-\infty, 0)\big)$
  to the solution $u_i$ of~\eqref{eq:conlaw1in} with initial
  data~\eqref{indata-nonconv-trace} 
  and boundary data
  \begin{equation*}
    k_i(t) = \frac{3}{4}\qquad\forall~t\,,
  \end{equation*}
  since $(k_i^\nu)_\nu$ converges to $k_i$ in $\mathbf{L}^1_{loc}(0,\infty)$.
  Notice that
  \begin{equation*}
    u_i(t,x)=
    \begin{cases}
      \ \frac{1}{8}\qquad\text{if}\qquad  
      \begin{cases}
        x< -1+\frac{5t}{8}, \qquad t\le \frac{8}{5}&
        \\
        \noalign{\smallskip}
        \qquad\qquad\  \text{or}&
        \\
        \noalign{\smallskip}
        x< 0, \qquad  t\ge \frac{8}{5}&
      \end{cases}
      \\
      \noalign{\bigskip}
      \ \frac{1}{4}\qquad\text{if}\qquad 
      -1+\frac{5t}{8} <x< 0, \qquad t< \frac{8}{5}\,
    \end{cases}
  \end{equation*}
  and hence its boundary trace is
  \begin{equation*}
    u_i(t,0)=
    \begin{cases}
      \ \frac{1}{4}\quad &\text{if}\qquad 
      t< \frac{8}{5}
      \\
      \noalign{\medskip}
      \ \frac{1}{8}\quad &\text{if}\qquad 
      t\ge \frac{8}{5}\,.
    \end{cases}
  \end{equation*}
  On the other hand the  boundary traces of $u_i^\nu$ are
  \begin{equation*}
    u_i^\nu(t,0)=
    \begin{cases}
      \ \frac{1}{4}\quad &\text{if}\qquad 
      t=0
      \\
      \noalign{\medskip}
      \ \frac{3}{4}+\frac{1}{\nu}\quad &\text{if}\qquad 
      0<t< \frac{8\nu^2}{(5\nu+8)(\nu-8)}
      \\
      \noalign{\medskip}
      \ \frac{1}{8}\quad &\text{if}\qquad 
      t\ge \frac{8\nu^2}{(5\nu+8)(\nu-8)}
    \end{cases}
  \end{equation*}
  which pointwise converge to the function
  \begin{equation*}
    \psi(t)=
    \begin{cases}
      \ \frac{1}{4}\quad &\text{if}\qquad 
      t=0
      \\
      \noalign{\medskip}
      \ \frac{3}{4}\quad &\text{if}\qquad 
      0<t< \frac{8}{5}
      \\
      \noalign{\medskip}
      \ \frac{1}{8}\quad &\text{if}\qquad 
      t\ge \frac{8}{5}\,.
    \end{cases}
  \end{equation*}
  Instead, the sequence of flux-traces $(f(u_i^\nu(\cdot, 0)))_\nu$
  clearly converges to $f(u_i(\cdot, 0))$.
\end{remark}
\smallskip

\subsection{Maximization of flux depending functionals}

Let $\mathcal{J} : \R^m \to\R$ be a continuous map. 
Given $\overline u\in  \Pi_{\ell=1}^{m+n} \ \mathbf{L}^\infty
  (I_\ell;  \Omega)$,
and $T, M>0$,
we  consider the  optimization problem:
\begin{equation}
  \label{max-J}
   \sup_{(A,\,\gamma)\,\in\, \mathcal{U}^{^M}(\overline u)\,}  \int_0^T \mathcal{J}\big(f^{\mathcal{I}}(u(t,0;\, A,\gamma))\big)dt\,,
    \end{equation}
where $\mathcal{U}^M(\overline u)$ is the set of admissible controls defined in~\eqref{admiss-controls}
and  we set
\begin{equation}
  \label{fI-vector-def}
  f^{\mathcal{I}}(u(t,0;\, A,\gamma))\doteq \big(f(u_1(t,0;\, A,\gamma)),
  \cdots, f(u_m(t,0;\, A,\gamma))\big),
 \end{equation}
with the notations of Definition~\ref{def-adm-bdr-flux}.
Typical examples of  the map $\mathcal{J}$ are 
\begin{equation*}
  \mathcal{J}_1(\gamma_1,\dots,\gamma_m) = \sum_{i = 1}^m \gamma_i =
  \gamma_1+\cdots+\gamma_m,
  \qquad\quad
  \mathcal{J}_2(\gamma_1,\dots,\gamma_m) = \prod_{i = 1}^m \gamma_i =
  \gamma_1\cdot\, \cdots \, \cdot\gamma_m\,,
\end{equation*}
which are  commonly considered in the optimization rules introduced
for the definition of various Riemann solvers (see~\cite{b-y, gp2}).
Thanks to Theorem~\ref{compactness-G-M} we immediately deduce that the
optimization problem~\eqref{max-J} admits a solution.
\begin{theorem}
  \label{exmas}
  Given $\overline u\in  \Pi_{\ell=1}^{m+n} \ \mathbf{L}^\infty
  (I_\ell;  \Omega)$,
and $T>0$,
  for every fixed $M>0$
   there exists $(\,\widehat A, \widehat \gamma\,)\in
  \mathcal{U}^M(\overline u)$ such that
  \begin{equation}
    \label{max}
    \int_0^T \mathcal{J}\big(f^{\mathcal{I}}(u(t,0;\, \widehat A,\,\widehat
    \gamma))\big)\,dt
    = \sup_{(A,\,\gamma)\,\in\, \mathcal{U}^{^M}(\overline u)\,} 
    \int_0^T \mathcal{J}\big(f^{\mathcal{I}}(u(t,0;\, A,\gamma))\big)\,dt\,.
  \end{equation} 
\end{theorem}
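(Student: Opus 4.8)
The plan is to apply the direct method of the calculus of variations, after observing that the cost functional in~\eqref{max-J} depends on the control pair $(A,\gamma)$ only through the $m$-tuple of incoming boundary flux-traces, and then to invoke the $\mathbf{L}^1$-compactness of the admissible set $\mathcal{D}^M$ granted by Theorem~\ref{compactness-G-M}.

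First I would note that, writing $g=(g_1,\dots,g_m)$ with $g_i=f(u_i(\cdot,0;\, A,\gamma))$, the assignment $(A,\gamma)\mapsto f^{\mathcal{I}}(u(\cdot,0;\, A,\gamma))$ has range precisely $\mathcal{D}^M(\overline u)$ as $(A,\gamma)$ varies in $\mathcal{U}^M(\overline u)$, by the very definitions~\eqref{admiss-controls} and~\eqref{g-M}. Hence the supremum in~\eqref{max-J} coincides with $\sup_{g\in\mathcal{D}^M}\Phi(g)$, where $\Phi(g)\doteq\int_0^T \mathcal{J}(g(t))\,dt$. Since every $g\in\mathcal{D}^M$ takes values in the compact set $\left(f(\Omega)\right)^m$, on which the continuous map $\mathcal{J}$ is bounded, the functional $\Phi$ is finite and the supremum $S\doteq\sup_{g\in\mathcal{D}^M}\Phi(g)$ is a finite real number.

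Next I would choose a maximizing sequence $g^\nu\in\mathcal{D}^M$ with $\Phi(g^\nu)\to S$. By the compactness of $\mathcal{D}^M$ in $\mathbf{L}^1((0,T);\R^m)$ established in Theorem~\ref{compactness-G-M}, up to a subsequence there is a limit $g\in\mathcal{D}^M$ with $g^\nu\to g$ in $\mathbf{L}^1$ and, passing to a further subsequence, $g^\nu(t)\to g(t)$ for a.e.\ $t\in(0,T)$. Because all the values $g^\nu(t)$ and $g(t)$ lie in the compact set $\left(f(\Omega)\right)^m$ on which $\mathcal{J}$ is continuous and bounded, the integrands $\mathcal{J}(g^\nu(\cdot))$ converge pointwise a.e.\ to $\mathcal{J}(g(\cdot))$ while staying uniformly bounded; the dominated convergence theorem then gives $\Phi(g^\nu)\to\Phi(g)$, whence $\Phi(g)=S$. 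Finally, since $g\in\mathcal{D}^M$, definition~\eqref{g-M} furnishes a pair $(\widehat A,\widehat\gamma)\in\mathcal{U}^M(\overline u)$ with $g_i=f(u_i(\cdot,0;\, \widehat A,\widehat\gamma))$ for all $i\in\mathcal{I}$, which is exactly the maximizer claimed in~\eqref{max}.

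The only delicate point is the passage to the limit inside the functional: this is not a soft weak-lower-semicontinuity argument, but relies on the strong $\mathbf{L}^1$-convergence of the flux-traces provided by Theorem~\ref{compactness-G-M}, combined with the uniform $\mathbf{L}^\infty$-bound coming from the boundedness of $f(\Omega)$; together these make $\Phi$ continuous on $\mathcal{D}^M$. No convexity or sign assumption on $\mathcal{J}$ is required, only its continuity, and the reduction to optimization over $\mathcal{D}^M$ is precisely what lets the compactness theorem carry the entire argument.
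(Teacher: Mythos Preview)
Your proof is correct and follows essentially the same approach as the paper: reduce to optimizing $\Phi(g)=\int_0^T\mathcal{J}(g(t))\,dt$ over $\mathcal{D}^M$, use the $\mathbf{L}^1$-compactness of $\mathcal{D}^M$ from Theorem~\ref{compactness-G-M}, and conclude by continuity of $\Phi$. In fact you are slightly more careful than the paper, which simply asserts that $\Phi$ is continuous on $\mathbf{L}^1$; you correctly justify this via the uniform $\mathbf{L}^\infty$-bound (values in $(f(\Omega))^m$) and dominated convergence, which is the point that actually makes the continuity work without any growth assumption on $\mathcal{J}$.
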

\begin{proof}
  Since the set $\mathcal{D}^M$ is compact in $\mathbf{L}^1$ by
  Theorem~\ref{compactness-G-M}, 
  the conclusion follows observing
  that the map $g\mapsto \int_0^T \mathcal{J}(g(t))dt$ from
  $(\mathbf{L}^1((0, T);\R))^m$ to $\R$ is continuous.
\end{proof}

\noindent
The optimal solutions provided by Theorem~\ref{exmas} 
determine the  solutions of the nodal
Cauchy problem~\eqref{incoming}-\eqref{outgoing} 
whose boundary incoming flux-traces solve the
optimization problem~\eqref{max-J}.
Namely, an immediately consequence of Theorem~\ref{exmas}  is the following
\begin{corollary}
  \label{optsol-nodal-optsol-max}
  Given $\overline u\in  \Pi_{\ell=1}^{m+n} \ \mathbf{L}^\infty
  (I_\ell;  \Omega)$, and
  $T,M>0$, let $((\widehat a_{ji})_{ji},\, \widehat \gamma\,)\in
  \mathcal{U}^M(\overline u)$ be an optimal pair of controls
  for the maximization problem~\eqref{max}.
  Then, $\widehat u\doteq u(\cdot,\cdot\,; (\widehat a_{ji})_{ji},
  \widehat \gamma\,)$ is
  an entropy admissible weak solution  of the nodal
  Cauchy problem~\eqref{incoming}-\eqref{outgoing}  on $[0,T]$ 
  that satisfies 
  \begin{equation*}
    \label{max-nodal-sol}
    \begin{gathered}
      f\big(\widehat u_i(t,0)\big)=\widehat \gamma_i (t) 
      \qquad \textrm{a.e.}~t\in[0,T]\,,\quad \forall~i\in\mathcal{I}\,,
      \\
      \noalign{\smallskip}
      f(\widehat u_j(t,0))= \sum_{i=1}^m \widehat a_{ji}\,f(\widehat u_i(t,0))
      \qquad \textrm{a.e.}~t\in[0,T]\,,\quad \forall~j\in\mathcal{O}\,.
    \end{gathered}
  \end{equation*}
\end{corollary}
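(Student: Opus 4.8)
The plan is to observe that the statement follows simply by unwinding the definitions, since the optimal controls furnished by Theorem~\ref{exmas} are by construction an admissible pair. First I would invoke Theorem~\ref{exmas} to secure an optimizing pair $((\widehat a_{ji})_{ji},\widehat\gamma)\in\mathcal{U}^M(\overline u)$. Because $\mathcal{U}^M(\overline u)\subset\mathcal{U}(\overline u)$ by definition~\eqref{admiss-controls}, this pair lies in the set of admissible controls, so that Definition~\ref{def-adm-bdr-flux} directly provides the solution $\widehat u\doteq u(\cdot,\cdot\,;(\widehat a_{ji})_{ji},\widehat\gamma)$ together with a boundary datum $k$, and guarantees that each component $\widehat u_\ell$ is an entropy admissible weak solution (in the sense of Definition~\ref{def-ent-sol-ibvp}, hence with the regularity $\widehat u_\ell\in\mathbf{C}([0,T];\mathbf{L}^1_{\rm loc})$) of the Dirichlet problem~\eqref{eq:conlaw1in} for $\ell\in\mathcal{I}$, or~\eqref{eq:conlaw1out} for $\ell\in\mathcal{O}$, with the associated normalized boundary data.

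Next I would verify the three requirements of Definition~\ref{defi}. Properties~(1) and~(2)---that the incoming and outgoing components are entropy admissible weak solutions of the respective mixed problems---are exactly what Definition~\ref{def-adm-bdr-flux} yields for $\widehat u$, as just recalled. It then remains to check the nodal condition~(3), i.e. that the distribution constraint~\eqref{distribution} holds for a.e.\ $t\in(0,T]$. For this I would combine conditions~(2) and~(3) of Definition~\ref{def-adm-bdr-flux}: condition~(2) gives $f(\widehat u_i(t,0))=\widehat\gamma_i(t)$ for a.e.\ $t$ and every $i\in\mathcal{I}$, and substituting this identity into condition~(3) yields, for a.e.\ $t$ and every $j\in\mathcal{O}$,
\[
f(\widehat u_j(t,0))=\sum_{i=1}^m\widehat a_{ji}(t)\,\widehat\gamma_i(t)=\sum_{i=1}^m\widehat a_{ji}(t)\,f(\widehat u_i(t,0)),
\]
which is precisely~\eqref{distribution}. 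This simultaneously establishes both displayed flux identities in the statement and confirms condition~(3) of Definition~\ref{defi}.

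Since no estimates are required, I do not anticipate any genuine obstacle: the entire content of the corollary is already encoded in Definition~\ref{def-adm-bdr-flux} together with the membership $((\widehat a_{ji})_{ji},\widehat\gamma)\in\mathcal{U}^M(\overline u)$ produced by Theorem~\ref{exmas}. The only point requiring a little care is the bookkeeping with the a.e.\ nature of the trace identities and of the constraint~\eqref{distr-rules}, ensuring that conditions~(2) and~(3) of Definition~\ref{def-adm-bdr-flux} hold on a common full-measure subset of $(0,T]$; this is immediate, since a finite intersection of conull sets is conull.
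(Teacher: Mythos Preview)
Your proposal is correct and matches the paper's treatment: the paper states the corollary as ``an immediate consequence'' of Theorem~\ref{exmas} and gives no separate proof, and your argument---unwinding Definition~\ref{def-adm-bdr-flux} to verify the three conditions of Definition~\ref{defi}---is precisely the intended routine verification.
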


\begin{remark}
  \label{opt-funct-outgoing}
  Relying on the compactness of the set
  $\mathcal{G}^M_{x^\mathcal{O}}$ in~\eqref{g-M-x} established in
  Theorem~\ref{compactness-G-M}, we may derive the existence of
  optimal solutions for more general cost functionals than the one
  considered in~\eqref{max-J} related both to junction fluxes of the
  incoming edges and to the fluxes of solutions at fixed points of the
  outgoing edges. Namely, given a continuous map
  $\mathcal{J} : \R^{m+n} \to\R$, initial data
  $\overline u\in \Pi_{\ell=1}^{m+n} \ \mathbf{L}^\infty
  (I_\ell; \Omega)$,
  and $T, M>0$,
  $x^\mathcal{O} = (x_{m+1},\cdots,x_{m+n})\in [0,+\infty)^n$, setting
  \begin{equation*}
    f^{\mathcal{O}}(u(t,x^{\mathcal{O}};\, A,\gamma))\doteq
    \big(f(u_{m+1}(t,x_{m+1};\, A,\gamma)), \cdots,
    f(u_{m+n}(t,x_{m+n};\, A,\gamma))\big),
  \end{equation*}
  consider the optimization problem
  \begin{equation}
    \label{max-J-inc-out}
    \sup_{(A,\,\gamma)\,\in\, \mathcal{U}^{^M}(\overline u)\,}  \int_0^T \mathcal{J}\Big(f^{\mathcal{I}}(u(t,0;\, A,\gamma)),\,
    f^{\mathcal{O}}(u(t,x^{\mathcal{O}};\, A,\gamma))\Big)dt\,,
  \end{equation}
  where $f^{\mathcal{I}}$ is defined as in~\eqref{fI-vector-def}. With
  the same arguments of Theorem~\ref{exmas} we deduce
  that~\eqref{max-J-inc-out} admits an optimal solution and the
  supremum is achieved as a maximum.
\end{remark}

\subsection{Minimization of the total variation of optimal solutions}
It is quite easy to realize that 
the solution of the optimization problem~(\ref{max-J}) is in general not unique.
This is illustrated by an  example discussed in~\cite{accg}
where it is considered the special case of a junction with one incoming and one outgoing arc
and two solutions of~\eqref{max-J} are provided for the functional $\mathcal J (\gamma)=\gamma$.

On the other hand, in many applications (e.g. see~\cite{cgr} for vehicular traffic modeling) it would be 
desirable to select those optimal solutions that keep as small as possible the
total variation of the incoming  fluxes, i.e. that minimize
$\sum_{i=1}^m  \tv \{f(u_i(\cdot,0))\}$.
Therefore, for every $M > 0$, we define the  set of optimal pairs
\begin{equation*}
  \mathcal{U}^M_{max} = 
  \left\{(\widehat A, \widehat \gamma\,) \in {\mathcal{U}}^M:\
   \ \text{\eqref{max} holds}\right\},
\end{equation*}
which is nonempty because of Theorem~\ref{exmas},
and we  consider the  optimization problem
\begin{equation}
  \label{min-max-J}
  \inf_{\, (\widehat A,\,\widehat\gamma\,)\,\in\,\mathcal{U}^{^{M}}_{max}\ } 
  \sum_{i=1}^m  \tv \left\{f\big(u_i\big(\cdot, 0;\,
    \widehat A,\widehat \gamma\big)\big)\right\}\,.
\end{equation}
In the same spirit of~\eqref{min-max-J}, we also address an optimization problem
that includes in the same cost 
the integral functional in~\eqref{max-J} and the total variation of the flux in~\eqref{min-max-J}.
Namely, for every fixed $\delta>0$, consider the maximization problem
\begin{equation}
\label{max-Jdelta}
 \sup_{(A,\,\gamma)\,\in\, \mathcal{U}}
  \left(
   \int_0^T \mathcal{J}\big(f^{\mathcal{I}}(u(t,0;\, A,\gamma))\big)dt
   -\delta \, \sum_{i=1}^m \tv \left\{f\big(u_i\big(\cdot, 0;\, A, \gamma\big)\big)\right\}
   -\delta\tv\big\{A\big\}
   \right)
   \end{equation}
where $\tv\big\{A\big\}=\tv\big\{(a_{ji})_{j,i}\big\}\doteq\sum_{j,i} \tv\{a_{ji}\}$.
Here, the admissible controls are $ \mathbf{BV}$ functions with  arbitrarily large total variation.
\begin{theorem}
  \label{prop:min-sup}
  Given $\overline u\in \Pi_{\ell=1}^{m+n} \ \mathbf{L}^\infty
  (I_\ell; \Omega)$, and $T>0$, the following hold.
  \begin{itemize}
  \item[($i$)] For every fixed $M>0$ there exists
    $(A^*, \gamma^*)\in \mathcal{U}^M_{max}(\overline u)$ such that
    \begin{equation}
      \label{eq:min-sup}
      \sum_{i=1}^m  \tv \left\{f\big(u_i\big(\cdot, 0;\, A^*, \gamma^*\big)\big)\right\}
      = \inf_{\, (\widehat A,\,\widehat\gamma\,)\,\in\,\mathcal{U}^{^{M}}_{max}(\overline u)\ } 
      \sum_{i=1}^m  \tv \left\{f\big(u_i\big(\cdot, 0;\, \widehat A,\widehat \gamma\big)\big)\right\}\,.
    \end{equation} 
    \medskip
  \item[($ii$)] For every fixed $\delta>0$ there exists
    $(A^{\delta}, \gamma^{\delta})\in \mathcal{U}(\overline u)$ such
    that
    \begin{equation}
      \label{eq:max-Jdelta}
      \begin{aligned}
        & \int_0^T \mathcal{J} \big(f^{\mathcal{I}}(u(t,0;\,
        A^{\delta}\gamma^{\delta}))\big)dt -\delta \, \sum_{i=1}^m \tv
        \left\{f\big(u_i\big(\cdot, 0;\, A^{\delta},
          \gamma^{\delta}\big)\big)\right\}
        -\delta\tv\big\{A^{\delta}\big\}=
        \\
        &\, = \sup_{(A,\,\gamma)\,\in\, \mathcal{U}(\overline u)}
        \left( \int_0^T \mathcal{J}\big(f^{\mathcal{I}}(u(t,0;\,
          A,\gamma))\big)dt -\delta \, \sum_{i=1}^m \tv \left\{
            f\big(u_i\big(\cdot, 0;\, A, \gamma\big)\big)\right\}
          -\delta\tv\big\{A\big\} \right).
      \end{aligned}
    \end{equation}
  \end{itemize}
\end{theorem}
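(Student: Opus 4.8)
The plan is to treat both parts by the direct method of the calculus of variations, using the $\mathbf{L}^1$-compactness of Theorem~\ref{compactness-G-M} together with the lower semicontinuity of the essential variation under $\mathbf{L}^1$-convergence. Throughout I would exploit that $\mathcal{J}$ is continuous on the compact set $\left(f(\Omega)\right)^m$, so that the integral cost is continuous with respect to $\mathbf{L}^1$-convergence of the incoming flux-traces (exactly as in the proof of Theorem~\ref{exmas}), while the penalizing variations are only lower semicontinuous.

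For part~($i$) I would first invoke Theorem~\ref{exmas} to ensure $\mathcal{U}^M_{max}(\overline u)\neq\emptyset$, then take a minimizing sequence $(\widehat A^\nu,\widehat\gamma^\nu)\in\mathcal{U}^M_{max}(\overline u)$ for $\sum_{i=1}^m\tv\{f(u_i(\cdot,0;\widehat A^\nu,\widehat\gamma^\nu))\}$. Since the incoming flux-traces lie in the compact set $\mathcal{D}^M$ and the matrices in the compact set $\mathcal{A}^M$, Remark~\ref{pointw-conv-flux} yields a subsequence with $\widehat A^\nu\to A^*$ a.e.\ and $f(u_i^\nu(\cdot,0))\to f(u_i(\cdot,0;A^*,\gamma^*))$ in $\mathbf{L}^1$ and a.e. I must then check that the limit is admissible, i.e.\ that $(A^*,\gamma^*)\in\mathcal{U}^M(\overline u)$: the outgoing constraint $\sum_i a_{ji}^*\gamma_i^*=f(u_j(\cdot,0))\in\F_j$ survives because the products $a_{ji}^\nu\gamma_i^\nu$ converge a.e.\ (bounded factors) and $\F_j$ is closed, as shown within the proof of Theorem~\ref{compactness-G-M}. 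Continuity of the integral cost then gives that $(A^*,\gamma^*)$ is still optimal for~\eqref{max-J}, so $(A^*,\gamma^*)\in\mathcal{U}^M_{max}(\overline u)$; finally lower semicontinuity of the variation gives $\sum_i\tv\{f(u_i(\cdot,0;A^*,\gamma^*))\}\le\liminf_\nu\sum_i\tv\{f(u_i^\nu(\cdot,0))\}$, so the infimum in~\eqref{eq:min-sup} is attained.

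For part~($ii$) the admissible set $\mathcal{U}(\overline u)$ carries no a priori bound on the total variation, so the new ingredient is to extract one from the penalization. Denote by $\Phi$ the functional maximized in~\eqref{max-Jdelta} and let $S$ be its supremum. First I would show $S$ is finite: it is bounded below by $T\mathcal{J}(0,\dots,0)$, the value at the trivial zero control of Remark~\ref{nonempty-control-set} (which has zero variation once $A$ is taken constant), and above by $T\max_{(f(\Omega))^m}\mathcal{J}$, since the penalties are nonnegative. Taking a maximizing sequence $(A^\nu,\gamma^\nu)$ and using that the integral term is uniformly bounded, the estimate $\delta\big(\sum_i\tv\{f(u_i^\nu(\cdot,0))\}+\tv\{A^\nu\}\big)\le\int_0^T\mathcal{J}(\cdots)\,dt-\Phi(A^\nu,\gamma^\nu)$ forces a uniform bound $M'=M'(\delta)$ on all variations for large $\nu$, so that $(A^\nu,\gamma^\nu)\in\mathcal{U}^{M'}(\overline u)$. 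From here the compactness argument of part~($i$) applies: I extract a limit $(A^\delta,\gamma^\delta)$, verify its admissibility, and pass to the limit. The only delicate point is the passage: because the integral term $I^\nu$ \emph{converges} (not merely its $\liminf$), one has $I^\delta-\delta\liminf_\nu P^\nu=\limsup_\nu\big(I^\nu-\delta P^\nu\big)=S$, where $P^\nu$ denotes the total of the two variations; combined with lower semicontinuity $P^\delta\le\liminf_\nu P^\nu$ this gives $\Phi(A^\delta,\gamma^\delta)\ge S$, hence equality.

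I expect the main obstacle to lie in this coercivity step of part~($ii$): one must secure the reduction to a fixed $\mathcal{U}^{M'}$ \emph{before} any compactness can be invoked, since otherwise the total variations could blow up along the maximizing sequence and no limit would exist. Once this reduction is in place, both existence proofs reduce to the routine interplay between lower semicontinuity of the variation and continuity of the flux-dependent integral cost.
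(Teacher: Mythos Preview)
Your proposal is correct and follows essentially the same route as the paper: minimizing/maximizing sequence, compactness via Theorem~\ref{compactness-G-M} and Remark~\ref{pointw-conv-flux}, continuity of the integral cost, and lower semicontinuity of the essential variation. The only cosmetic difference is in the coercivity step of part~($ii$): the paper first shifts $\mathcal{J}$ by a constant so that the supremum becomes strictly positive and then reads off the bound $\tv\le C_1/\delta$, whereas you bound $S$ from below directly via the trivial zero control of Remark~\ref{nonempty-control-set} and obtain the same uniform variation bound from $\delta P^\nu\le I^\nu-\Phi(A^\nu,\gamma^\nu)$; the two devices are interchangeable.
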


\noindent
\textit{Proof.}
\smallskip

\noindent  {(${ i}$)} 
  Consider a minimizing sequence $\big((A^\nu,g^\nu)\big)_{\!\nu}$ 
  for~\eqref{min-max-J} such that
  \begin{gather}
  g^\nu_i= f\big(u_i(\cdot,0;\ A^\nu, \gamma^\nu)\big)\qquad\quad 
  (A^\nu,\,\gamma^\nu)\,\in\,\mathcal{U}^{M}_{max}\,,
  \qquad i\in\mathcal{I}\,,  \qquad\forall~\nu\,,
  \\
  \noalign{\medskip}
  \label{gnu-opt}
  \int_0^T \mathcal{J}\big(g^\nu_1(t),\dots, g^\nu_m(t)\big)\,dt
   =\max_{(A,\,\gamma)\,\in\, \mathcal{U}^{^M}\,}  \int_0^T \mathcal{J}\big(f^{\mathcal{I}}(u(t,0;\, A,\gamma))\big)\,dt,
   \qquad\forall~\nu\,,
  \\
  \noalign{\medskip}
  \label{gnu-tv-inf}
  \sum_{i=1}^m \tv \big\{g_i^\nu \big\}\quad
    \to \quad \inf_{\, (\widehat A,\,\widehat\gamma\,)\,\in\,\mathcal{U}^{^{M}}_{max}\ } 
\sum_{i=1}^m  \tv \left\{f\big(u_i\big(\cdot, 0;\, \widehat A,\widehat \gamma\big)\big)\right\}.
  \end{gather}
   Since  by~\eqref{g-M}
   we have $(A^\nu, g^\nu) \in{\mathcal{D}}^M$ for all $\nu$,
  applying Theorem~\ref{compactness-G-M} and relying on Remark~\ref{pointw-conv-flux},
  we deduce that there exists a subsequence, again denoted by
  $\big((A^\nu,g^\nu)\big)_{\!\nu}$, and an element $(A^*, g^*) \in
  {\mathcal{D}}^M$, with $g^*= f^\mathcal{I}(u(\cdot,0;\, A^*,\gamma^*))$,
  $(A^*,\gamma^*)\in\mathcal{U}^M$,
   such that 
  \begin{equation}
    \label{gnu-conv-57}
      g^\nu \quad \to \quad g^*\quad
    \qquad\textrm{in}\quad \mathbf{L}^1\,.
  \end{equation}
   Hence, \eqref{gnu-opt} together with~\eqref{gnu-conv-57} yields
   \begin{equation*}
    \int_0^T \mathcal{J}\big(g^*(t)\big)\,dt
   =\max_{(A,\,\gamma)\,\in\, \mathcal{U}^{^M}\,}  \int_0^T \mathcal{J}\big(f^{\mathcal{I}}(u(t,0;\, A,\gamma))\big)\,dt\,,
  \end{equation*} 
  showing that $(A^*,\gamma^*) \in \mathcal{U}^M_{max}$.
  On the other hand, by the lower semicontinuity property of the essential variation 
  with respect to the  $\mathbf{L}^1_{{\rm loc}}$-topology (see~\cite[Sections 3.1-3.2]{AFP}),
  and by the property of the 
  liminf  operation with respect to the sum, we derive
  \begin{equation}
  \begin{aligned}
  \label{gnu-conv-577}
  \sum_{i=1}^m \tv \big\{g^*_i\big\} &\le  \sum_{i=1}^m\liminf_{\nu} \tv\big\{g^\nu_i\big\}
  %
  \le \liminf_{\nu}\bigg(\sum_{i=1}^m  \tv\big\{g^\nu_i\big\}\bigg)
  \\
  \noalign{\smallskip}
  &=\lim_{\nu} \bigg(\sum_{i=1}^m  \tv\big\{g^\nu_i\big\}\bigg) =
  \inf_{\, (\widehat A,\,\widehat\gamma\,)\,\in\,\mathcal{U}^{^{M}}_{max}\ } 
\sum_{i=1}^m  \tv \left\{f\big(u_i\big(\cdot, 0;\, \widehat A,\widehat \gamma\big)\big)\right\}.
  \end{aligned}
  \end{equation}
  Since $(A^*,\gamma^*) \in \mathcal{U}^M_{max}$ it follows from~\eqref{gnu-conv-577} that
  \eqref{eq:min-sup} holds concluding the proof of ($i$).
  \medskip
  
  \noindent  {(${ ii}$)} 
  We shall provide a proof of the existence of $(A^{\delta},  \gamma^{\delta})\in
  \mathcal{U}(\overline u)$
  satisfying~\eqref{eq:max-Jdelta} only in the case where
  the supremum in~\eqref{max-Jdelta} is strictly positive. In fact, in the case where such a supremum is non positive,  
  we may always consider the problem~\eqref{max-Jdelta} with a new integrand function $\mathcal{J}^\sharp=\alpha+\mathcal{J}$,
  $\alpha$ being a positive constant chosen so that the supremum in~\eqref{eq:max-Jdelta}  be
  strictly positive. Clearly, a maximizer for~\eqref{max-Jdelta} with $\mathcal{J}^\sharp$ in place of $\mathcal{J}$
  provides also a maximizer for the original problem~\eqref{max-Jdelta}.

  Then, assume that the supremum in~\eqref{max-Jdelta}  is strictly positive and 
  consider a maximizing sequence $\big((A^\nu,g^\nu)\big)_{\!\nu}$ 
  for~\eqref{max-Jdelta}  such that, for all $\nu$, there holds
  \begin{equation}
  \label{gnu-def-79}
  g^\nu_i= f\big(u_i(\cdot,0;\ A^\nu, \gamma^\nu)\big)\qquad\quad 
  (A^\nu,\,\gamma^\nu)\,\in\,\mathcal{U}\,,
  \qquad i\in\mathcal{I}\,, 
  \end{equation}
  and satisfying
  \begin{equation}
  \label{gnu-conv-70}
   \begin{aligned}
    & \lim_\nu \bigg(\int_0^T \mathcal{J}\big(g^\nu_1(t),\dots, g^\nu_m(t)\big)\,dt
   -\delta \, \sum_{i=1}^m  \tv \big\{g_i^\nu \big\}-\delta \tv\big\{A^\nu\big\}\bigg)=
    \\
    &\qquad\quad =\sup_{(A,\,\gamma)\,\in\, \mathcal{U}}
    \left(
   \int_0^T \mathcal{J}\big(f^{\mathcal{I}}(u(t,0;\, A,\gamma))\big)dt
   -\delta \, \sum_{i=1}^m \tv \left\{f\big(u_i\big(\cdot, 0;\, A, \gamma\big)\big)\right\}
   -\delta \tv\big\{A\big\}
   \right).
    \end{aligned}
  \end{equation}
  Since all $g^\nu$ take values in the bounded set $\left[f(\Omega)\right]^m$
  and $ \mathcal{J}$ is continuous,
  and by virtue of the strictly positive assumption on the supremum in~\eqref{max-Jdelta},
 we may find a constant $C_1>0$ such that,
  for all $\nu$ sufficiently large,
  there hold
  \begin{equation}
  \label{bound-tv-maximizer-1}
  \begin{gathered}
  \int_0^T \mathcal{J}\big(g^\nu_1(t),\dots, g^\nu_m(t)\big)\,dt\le C_1\,,
  \\
  \int_0^T \mathcal{J}\big(g^\nu_1(t),\dots, g^\nu_m(t)\big)\,dt
   -\delta \, \sum_{i=1}^m  \tv \big\{g_i^\nu \big\}-\delta \tv\big\{A^\nu\big\}>0\,.
  \end{gathered}
  \end{equation}
  Thus, \eqref{bound-tv-maximizer-1} implies that, for $\nu$ sufficiently large, one has
  \begin{equation}
  \label{bound-tv-maximizer-2}
  \tv\big\{ g_i^\nu\big\}\leq \frac{C_1}{\delta}
\qquad \forall~i\in\mathcal{I}\,,
\qquad\qquad\quad
\tv\big\{a_{ji}\big\}\leq \frac{C_1}{\delta}
\qquad \forall~j,i\,.
  \end{equation}
  Recalling that by Definition~\ref{def-adm-bdr-flux} and because of~\eqref{gnu-def-79}
  we have 
  \begin{equation*}
   g_i^\nu=\gamma_i^\nu\qquad\quad\forall~i\in\mathcal{I},\qquad
   \forall~\nu\ \ \text{large}\,,
  \end{equation*}
  we deduce
  from~\eqref{bound-tv-maximizer-2} that $(A^\nu, g^\nu) \in{\mathcal{D}}^{M_0}$,
  with $M_0\doteq\frac{C_1}{\delta}$, for all $\nu$
  sufficiently large.
  Therefore,  applying Theorem~\ref{compactness-G-M} and relying on Remark~\ref{pointw-conv-flux},
  we deduce that there exist a subsequence, again denoted by
  $\big((A^\nu,g^\nu)\big)_{\!\nu}$, and an element $(A^{\delta},  g^{\delta}) \in
  \mathcal{D}^{M_0}\subset \mathcal{D}$, with $g^{\delta}= f^\mathcal{I}(u(\cdot,0;\, A^{\delta},\gamma^{\delta}))$,
  $(A^{\delta},\gamma^{\delta})\in\mathcal{U}^{M_0}\subset \mathcal{U}$,
   such that 
  \begin{equation}
      \label{Anu-conv-78}
   A^\nu(t) \quad \to \quad A^{\delta}(t)
   \qquad\quad\forall~t\in[0,T]\,,
   \end{equation}
    \begin{equation}
    \label{gnu-conv-77}
      g^\nu \quad \to \quad g^{\delta}\quad
    \qquad\textrm{in}\quad \mathbf{L}^1\,.
  \end{equation}
Then, relying on~\eqref{Anu-conv-78}, \eqref{gnu-conv-77}, 
we deduce as in the proof of point ($i$) that
\begin{gather}
\label{gnu-conv-88}
 \int_0^T \mathcal{J}\big(g^{\delta}_1(t),\dots, g^{\delta}_m(t)\big)\,dt
=\lim_\nu  \int_0^T \mathcal{J}\big(g^\nu_1(t),\dots, g^\nu_m(t)\big)\,dt\,,
\\
\noalign{\smallskip}
\label{gnu-conv-89}
\sum_{i=1}^m  \tv \big\{g_i^{\delta} \big\}
\le \liminf_\nu \sum_{i=1}^m  \tv \big\{g_i^\nu \big\}\,,
\\
\noalign{\smallskip}
\label{gnu-conv-90}
      \sum_{j,i=1}^m  \tv \big\{a_{ji}^{\delta} \big\}
\le \liminf_\nu \sum_{j,i=1}^m  \tv \big\{a_{ji}^\nu \big\}\,.
\end{gather}
Hence, by the property of the liminf and limsup operations with respect to the sum and by virtue of~\eqref{gnu-conv-70}, \eqref{gnu-conv-88}-\eqref{gnu-conv-90}, we find
\begin{equation*}
\begin{aligned}
&\sup_{(A,\,\gamma)\,\in\, \mathcal{U}}
    \left(
   \int_0^T \mathcal{J}\big(f^{\mathcal{I}}(u(t,0;\, A,\gamma))\big)dt
   -\delta \, \sum_{i=1}^m \tv \left\{f\big(u_i\big(\cdot, 0;\, A, \gamma\big)\big)\right\}
   -\delta \tv\big\{A\big\}
   \right)\le
   \\
   &\qquad\le \lim_\nu  \int_0^T \mathcal{J}\big(g^\nu_1(t),\dots, g^\nu_m(t)\big)\,dt
   +\limsup_\nu\left(
   -\delta \, \sum_{i=1}^m  \tv \big\{g_i^\nu \big\}-\delta \tv\big\{A^\nu\big\}
   \right)
   \\
   &\qquad\le \lim_\nu  \int_0^T \mathcal{J}\big(g^\nu_1(t),\dots, g^\nu_m(t)\big)\,dt
   -\delta \, \sum_{i=1}^m  \liminf_\nu \tv \big\{g_i^\nu \big\}-\delta \sum_{j,i=1}^m \liminf_\nu\tv\big\{a_{ji}^\nu\big\}
   \\
   &\qquad= \int_0^T \mathcal{J}\big(g^{\delta}_1(t),\dots, g^{\delta}_m(t)\big)\,dt
    -\delta\sum_{i=1}^m  \tv \big\{g_i^{\delta} \big\}
    - \delta  \sum_{j,i=1}^m  \tv \big\{a_{ji}^{\delta} \big\}\,,
\end{aligned}
\end{equation*}
proving that $(A^{\delta},  \gamma^{\delta})\in\mathcal{U}$ satisfies~\eqref{eq:max-Jdelta},
completing the proof of the theorem.
\qed

\smallskip

\subsection{Equivalent variational formulations}

We present here an equivalent variational formulation of the
optimization problem~\eqref{min-max-J}
which may be useful also for numerical investigations 
of the optimal solutions as discussed in Section~\ref{sec:num}.
Namely, for every fixed $M>0$, consider the function
\begin{equation}
  \label{IM-def}
  I_M(\delta)= 
  \max_{(A,\,\gamma)\,\in\, \mathcal{U}^M(\overline u)}
  \left(
    \int_0^T \mathcal{J}\big(f^{\mathcal{I}}(u(t,0;\, A,\gamma))\big)dt
    -\delta \, \sum_{i=1}^m \tv
    \left\{f\big(u_i\big(\cdot, 0;\, A, \gamma\big)\big)\right\}
  \right)
\end{equation}
which is  well defined for $\delta>0$ by 
the same arguments of the proof of Theorem~\ref{prop:min-sup}-($ii$).
We shall analyze its limit when the argument vanishes.
\begin{theorem}
  \label{thmdelta}
  Given $\overline u\in  \Pi_{\ell=1}^{m+n} \  \mathbf{L}^\infty
  (I_\ell;  \Omega)$, and
  $T>0$,  
  for every fixed $M>0$ one has
  \begin{equation}
    \label{lim-IM}
    \lim_{\delta\to0} I_M(\delta)=
    \max_{(A,\,\gamma)\,\in\, \mathcal{U}^{^M}(\overline u)\,}
    \int_0^T \mathcal{J}\big(f^{\mathcal{I}}(u(t,0;\, A,\gamma))\big)\,dt.
  \end{equation}
  Moreover, given any sequence
  $(A^\nu, \gamma^\nu)\in\mathcal{U}^M(\overline u)$
  of maximizers for $I_M(\delta^\nu)$, with $\delta^\nu \to 0$, 
  there exist a subsequence  $((A^{\nu'}, \gamma^{\nu'}))_{\nu'}$, and 
  $(A^\sharp,\gamma^\sharp)\in\mathcal{U}^M_{max}(\overline u)$, satisfying
  \begin{equation}
    \label{conv-maximizers}
    f^{\mathcal{I}}\big(u(\cdot,0;\ A^{\nu'}, \gamma^{\nu'})\big)
    \quad \to \quad f^{\mathcal{I}}\big(u(\cdot,0;\ A^\sharp, \gamma^\sharp)\big)
    \qquad\textrm{in}\qquad \mathbf{L}^1
  \end{equation}
  and
  \begin{equation}
    \label{AM-opt-sol}
    \sum_{i=1}^m  \tv \left\{f\big(u_i
      \big(\cdot, 0;\, A^\sharp, \gamma^\sharp\big)\big)\right\}=
    \min_{\, (\widehat A,\,\widehat\gamma\,)\,\in\,\mathcal{U}^{^{M}}_{max}
      (\overline u)\ } 
    \sum_{i=1}^m  \tv \left\{f\big(u_i\big(\cdot, 0;\, \widehat A,\widehat
      \gamma\big)\big)\right\}.
  \end{equation}
\end{theorem}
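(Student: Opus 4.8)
The plan is to combine a squeezing estimate with the compactness of Theorem~\ref{compactness-G-M}. Write
\[
V\doteq \max_{(A,\gamma)\in\mathcal{U}^M(\overline u)}\int_0^T \mathcal{J}\big(f^{\mathcal{I}}(u(t,0;\,A,\gamma))\big)\,dt
\]
for the value on the right-hand side of~\eqref{lim-IM}, which is attained by Theorem~\ref{exmas}, and let $V_{\min}$ denote the minimal total variation in~\eqref{min-max-J}, attained at some $(\widehat A,\widehat\gamma)\in\mathcal{U}^M_{max}(\overline u)$ by Theorem~\ref{prop:min-sup}-($i$). Since the total variation term in~\eqref{IM-def} is nonnegative, one immediately has $I_M(\delta)\le V$ for every $\delta>0$; on the other hand, testing the maximum defining $I_M(\delta)$ with $(\widehat A,\widehat\gamma)$ gives $I_M(\delta)\ge V-\delta\,V_{\min}$. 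Squeezing these two bounds as $\delta\to0$ yields~\eqref{lim-IM}.

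For the second assertion, given maximizers $(A^\nu,\gamma^\nu)\in\mathcal{U}^M(\overline u)$ of $I_M(\delta^\nu)$ with $\delta^\nu\to0$, I set $g^\nu\doteq f^{\mathcal{I}}(u(\cdot,0;\,A^\nu,\gamma^\nu))$, so that $(A^\nu,g^\nu)\in\mathcal{A}^M\times\mathcal{D}^M$. Applying Theorem~\ref{compactness-G-M} and Remark~\ref{pointw-conv-flux}, I would pass to a subsequence along which $A^{\nu'}(t)\to A^\sharp(t)$ for a.e.\ $t$ and $g^{\nu'}\to g^\sharp\doteq f^{\mathcal{I}}(u(\cdot,0;\,A^\sharp,\gamma^\sharp))$ in $\mathbf{L}^1$, for some $(A^\sharp,\gamma^\sharp)\in\mathcal{U}^M(\overline u)$; this is precisely~\eqref{conv-maximizers}. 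Because $g_i^\nu=\gamma_i^\nu$ takes values in the bounded set $f(\Omega)$ with $\tv\{\gamma_i^\nu\}\le M$, the quantity $\sum_{i=1}^m\tv\{g_i^\nu\}$ is bounded by $mM$, whence $\delta^\nu\sum_{i=1}^m\tv\{g_i^\nu\}\to0$. Using the continuity of $g\mapsto\int_0^T\mathcal{J}(g(t))\,dt$ under $\mathbf{L}^1$ convergence together with the limit~\eqref{lim-IM} just proved, I then obtain $\int_0^T\mathcal{J}(g^\sharp(t))\,dt=\lim_{\nu'}I_M(\delta^{\nu'})=V$, which shows that $(A^\sharp,\gamma^\sharp)\in\mathcal{U}^M_{max}(\overline u)$.

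The crux is the minimality~\eqref{AM-opt-sol}. Here I extract a uniform total-variation bound from optimality: since $(A^\nu,\gamma^\nu)$ maximizes $I_M(\delta^\nu)$, testing against $(\widehat A,\widehat\gamma)$ gives
\[
\int_0^T\mathcal{J}(g^\nu(t))\,dt-\delta^\nu\sum_{i=1}^m\tv\{g_i^\nu\}\ \ge\ V-\delta^\nu V_{\min},
\]
and rearranging with $\int_0^T\mathcal{J}(g^\nu(t))\,dt\le V$ yields $\delta^\nu\sum_{i=1}^m\tv\{g_i^\nu\}\le\delta^\nu V_{\min}$, hence $\sum_{i=1}^m\tv\{g_i^\nu\}\le V_{\min}$ for every $\nu$. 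Passing to the limit along the subsequence and invoking the lower semicontinuity of the essential variation under $\mathbf{L}^1$ convergence (cf.~\cite[Sections 3.1--3.2]{AFP}) gives $\sum_{i=1}^m\tv\{g_i^\sharp\}\le\liminf_{\nu'}\sum_{i=1}^m\tv\{g_i^{\nu'}\}\le V_{\min}$; since $(A^\sharp,\gamma^\sharp)\in\mathcal{U}^M_{max}$ forces the reverse inequality by definition of $V_{\min}$, equality holds, which is~\eqref{AM-opt-sol}. The only genuinely delicate point is this last interplay between the optimality inequality, which bounds the total variation from above along the sequence, and the lower semicontinuity of the total variation in the limit; the remaining steps reduce to the compactness already in hand and the elementary squeezing estimate.
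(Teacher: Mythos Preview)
Your proof is correct and follows essentially the same route as the paper's: extract a convergent subsequence via Theorem~\ref{compactness-G-M} and Remark~\ref{pointw-conv-flux}, compare the maximizers $(A^\nu,\gamma^\nu)$ against a fixed min-TV optimizer $(\widehat A,\widehat\gamma)\in\mathcal{U}^M_{max}$ to obtain the uniform bound $\sum_i\tv\{g^\nu_i\}\le V_{\min}$, and conclude~\eqref{AM-opt-sol} by lower semicontinuity of the essential variation. The only cosmetic difference is that you isolate~\eqref{lim-IM} first via a clean squeezing estimate and then feed it into the subsequence argument, whereas the paper derives~\eqref{lim-IM} along the way from the same comparison inequality; your organization is arguably a bit more transparent but the substance is identical.
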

%
\noindent
\begin{proof}
  In order to establish
  the theorem  it will be sufficient to show that, 
  given $\delta^\nu \to 0$ and every sequence
  $(A^\nu, \gamma^\nu)\in\mathcal{U}^M$
   such that
   \begin{equation}
   \label{Id-maximizer}
   I_M(\delta^\nu)=
   \int_0^T \mathcal{J}\big(f^{\mathcal{I}}(u(t,0;\, A^\nu,\gamma^\nu))\big)dt
   -\delta^\nu \, \sum_{i=1}^m \tv \left\{f\big(u_i\big(\cdot, 0;\, A^\nu, \gamma^\nu\big)\big)\right\}
   \qquad\forall~\nu\,,
   \end{equation}
   %
   there exist a subsequence,
  again denoted by $((A^\nu, \gamma^\nu))_\nu$, and 
  $(A^{\sharp},\gamma^{\sharp})\in\mathcal{U}^M_{max}$,
  such that there hold~\eqref{conv-maximizers}, \eqref{AM-opt-sol}
  and
  \begin{gather}
  \label{lim-Idnu}
  \lim_\nu I_M(\delta^\nu)= \max_{(A,\,\gamma)\,\in\, \mathcal{U}^{^M}(\overline u)\,}  \int_0^T \mathcal{J}\big(f^{\mathcal{I}}(u(t,0;\, A,\gamma))\big)\,dt.
  \end{gather}
  Towards this goal, in connection with the sequence $(A^\nu,\gamma^\nu)\in\mathcal{U}^M$
  satisfying~\eqref{Id-maximizer}, set
  \begin{equation*}
  g^\nu_i= f\big(u_i(\cdot,0;\ A^\nu, \gamma^\nu)\big)\qquad\quad 
   i\in\mathcal{I}\,,  \qquad\forall~\nu\,,
   \end{equation*}
  and consider the sequence of $(A^\nu,g^\nu)\in\mathcal{D}^M$.
  Then, invoking Theorem~\ref{compactness-G-M} and Remark~\ref{pointw-conv-flux} we deduce that
  there exists a subsequence, again denoted by
  $\big((A^\nu,g^\nu)\big)_{\!\nu}$, and an element $(A^\sharp, g^\sharp) \in
  {\mathcal{D}}^M$, with $g^\sharp= f^\mathcal{I}(u(\cdot,0;\, A^\sharp,\gamma^\sharp))$,
  $(A^\sharp,\gamma^\sharp)\in\mathcal{U}^M$,
  so that we have
  \begin{equation}
    \label{gnu-conv-571}
      g^\nu \quad \to \quad g^\sharp\quad
    \qquad\textrm{in}\quad \mathbf{L}^1\,.
  \end{equation}
  Next, relying on Theorem~\ref{prop:min-sup}-($i$),
  consider $(A^*,\gamma^*)\in\mathcal{U}^M_{max}$
   such that~\eqref{eq:min-sup} holds and set $g^*= f^\mathcal{I}(u(\cdot,0;\, A^*,\gamma^*))$. 
   Since $(A^*,\gamma^*)\in\mathcal{U}^M$, 
   by definition~\eqref{IM-def}
   we have
   \begin{equation}
    \label{eq:delta-nu-ineq}
    \int_0^T \mathcal{J}(g^*(t))dt-\delta_\nu
    \sum_{i=1}^m \tv \big\{g^*_i \big\}
     \leq \int_0^T
    \mathcal{J}(g^\nu(t))dt-\delta_\nu \sum_{i=1}^m
    \tv \big\{g^\nu_i\big\}\qquad\forall~\nu\, .
  \end{equation}
  Notice that $\tv\{g_i^\nu\}\le M$ for all $i\in\mathcal{I}$ and for all $\nu$ because
   $(A^\nu,g^\nu)\in\mathcal{D}^M$. Hence, taking the limit in~\eqref{eq:delta-nu-ineq} when $\delta^\nu\to 0$,
   and relying on~\eqref{gnu-conv-571},
   we derive
      \begin{equation*}
    \int_0^T \mathcal{J}(g^*(t))dt 
    \leq \int_0^T \mathcal{J}(g^\sharp(t))dt
      \end{equation*}
    showing that also $(A^\sharp,\gamma^\sharp)\in\mathcal{U}^M_{max}$.
    Thus, one has
    \begin{equation}
    \label{int-gsharp-1}
    \int_0^T \mathcal{J}(g^\sharp(t))dt
    = \max_{(A,\,\gamma)\,\in\, \mathcal{U}^{^M}(\overline u)\,}  \int_0^T \mathcal{J}\big(f^{\mathcal{I}}(u(t,0;\, A,\gamma))\big)\,dt.
    \end{equation}
    With the same arguments, relying on~\eqref{gnu-conv-571}
    and taking the limit in~\eqref{Id-maximizer},  we deduce
    \begin{equation}
    \label{int-gsharp-2}
    \lim_\nu I_M(\delta^\nu) = \int_0^T \mathcal{J}(g^\sharp(t))dt.
    \end{equation}
    Therefore we recover  \eqref{conv-maximizers}, \eqref{lim-Idnu}
    from~\eqref{gnu-conv-571}, \eqref{int-gsharp-1}, \eqref{int-gsharp-2}. In order to complete the proof of the theorem it remains to establish~\eqref{AM-opt-sol}.
     To this end notice that, since $(A^\nu,\gamma^\nu)\in\mathcal{U}^M$
     for all $\nu$ and
      $(A^*,\gamma^*)\in\mathcal{U}^M_{max}$, we have
      \begin{equation*}
      \int_0^T \mathcal{J}(g^\nu(t))dt\le
    \int_0^T \mathcal{J}(g^*(t))dt\qquad\forall~\nu\,.
      \end{equation*}
  which in turn, because of~\eqref{eq:delta-nu-ineq}, implies 
      \begin{equation}
      \label{gnu-tv-bound-81}
      \sum_{i=1}^m
    \tv \big\{g^\nu_i\big\}\le \sum_{i=1}^m \tv \big\{g^*_i \big\} \qquad\forall~\nu\, .
      \end{equation}
      Thus, by the property of the liminf operation and
      relying on~\eqref{gnu-conv-571} as in the proof of Theorem~\ref{prop:min-sup},
      we derive from~\eqref{gnu-tv-bound-81} the estimates
      \begin{equation}
      \label{gsharp-tv-bound}
      \begin{aligned}
      \sum_{i=1}^m
    \tv \big\{g^\sharp_i\big\}&\le \sum_{i=1}^m\liminf_\nu \, \tv \big\{g^\nu_i\big\}
    \\
    \noalign{\smallskip}
    &\le \liminf_\nu \bigg(\sum_{i=1}^m \tv \big\{g^\nu_i\big\}\bigg)
    \le \sum_{i=1}^m \tv \big\{g^*_i \big\}\,.
      \end{aligned}
      \end{equation}
    Since \eqref{eq:min-sup} holds for  $(A^*,\gamma^*)$, we deduce from~\eqref{gsharp-tv-bound} 
    that  $(A^\sharp,\gamma^\sharp)$ verifies \eqref{AM-opt-sol}, completing the proof of the theorem.
     \end{proof}

\noindent
We next consider the function
\begin{equation}
  \label{I-def}
  I(\delta) = \!\!\!\!\!
  \max_{(A,\,\gamma)\,\in\, \mathcal{U}(\overline u)}
  \!\left(
    \int_0^T \!\!\!\mathcal{J}\big(f^{\mathcal{I}}(u(t,0;\, A,\gamma))\big)dt
    -\delta \, \sum_{i=1}^m \tv \left\{f\big(u_i\big(\cdot, 0;\, A,
      \gamma\big)\big)\right\}
    -\delta\tv\big\{A\big\}
  \right),
\end{equation}
which is  well defined for $\delta>0$ by Theorem~\ref{prop:min-sup}-(ii).
We shall analyze its relation with the optimization problem
\begin{equation}
  \label{max-M-J}
\sup_{(A,\,\gamma)\,\in\, \mathcal{U}(\overline u)\,} 
   \int_0^T \mathcal{J}\big(f^{\mathcal{I}}(u(t,0;\, A,\gamma))\big)dt\,.
    \end{equation}
Here, we are maximizing the integral functional of the flux-traces among all admissible controls without imposing a uniform bound on the total variation.
%
\begin{theorem}
  \label{thmdelta-sup}
  Given $\overline u\in  \Pi_{\ell=1}^{m+n} \  \mathbf{L}^\infty
  (I_\ell;  \Omega)$, and
$T>0$,  one has
\begin{equation}
 \label{lim-I-sup}
  \lim_{\delta\to 0} I(\delta) = \sup_{(A,\,\gamma)\,\in\, \mathcal{U}(\overline u)\,}
   \int_0^T \mathcal{J}\big(f^{\mathcal{I}}(u(t,0;\, A,\gamma))\big)dt\,.
  \end{equation}
More precisely, one of the following two cases holds:
\begin{itemize}
 \item[($i$)] there exists $M_0>0$ such that
\begin{equation*}
  \lim_{\delta\to 0} I(\delta) 
  = \max_{(A,\,\gamma)\,\in\, \mathcal{U}^{M}(\overline u)\,}
   \int_0^T \mathcal{J}\big(f^{\mathcal{I}}(u(t,0;\, A,\gamma))\big)dt
    \qquad\forall~M>M_0\,,
  \end{equation*}
  and the supremum in~\eqref{max-M-J} is attained as a maximum;
  \item[($ii$)] 
\begin{equation*}
  \lim_{\delta\to 0} I(\delta) 
  > \max_{(A,\,\gamma)\,\in\, \mathcal{U}^{M}(\overline u)\,}
   \int_0^T \mathcal{J}\big(f^{\mathcal{I}}(u(t,0;\, A,\gamma))\big)dt
  \qquad\forall~M>0\,,
  \end{equation*}
  and the optimization problem~\eqref{max-M-J} does not admit a maximum.
\end{itemize}
\end{theorem}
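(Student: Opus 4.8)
The plan is to exploit the monotonicity of $I(\delta)$ in $\delta$ together with a two-sided estimate to identify $\lim_{\delta\to0}I(\delta)$ with the supremum $S\doteq\sup_{(A,\gamma)\in\mathcal{U}(\overline u)}\int_0^T\mathcal{J}(f^{\mathcal{I}}(u(t,0;A,\gamma)))\,dt$ appearing in~\eqref{max-M-J}, and then to split into two cases according to whether this supremum is attained. Note at the outset that $S<\infty$, since $\mathcal{J}$ is continuous and the flux-traces take values in the bounded set $f(\Omega)$, and that $\mathcal{U}(\overline u)\ne\emptyset$ by Remark~\ref{nonempty-control-set}.

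First I would prove~\eqref{lim-I-sup}. Because the penalty term $\delta\big(\sum_i\tv\{f(u_i(\cdot,0;A,\gamma))\}+\tv\{A\}\big)$ is nonnegative and increasing in $\delta$, the functional maximized in~\eqref{I-def} is, for each fixed control, nonincreasing in $\delta$; hence $I(\delta)$ is nonincreasing and the limit $L\doteq\lim_{\delta\to0^+}I(\delta)=\sup_{\delta>0}I(\delta)$ exists. Dropping the nonnegative penalty gives $I(\delta)\le S$ for every $\delta>0$, so $L\le S$. For the reverse inequality I would argue controlwise: for each fixed $(A,\gamma)\in\mathcal{U}(\overline u)$ one has $I(\delta)\ge\int_0^T\mathcal{J}(f^{\mathcal{I}}(u(t,0;A,\gamma)))\,dt-\delta\,\mathrm{pen}(A,\gamma)$, and since $\mathrm{pen}(A,\gamma)<\infty$, letting $\delta\to0$ yields $L\ge\int_0^T\mathcal{J}(f^{\mathcal{I}}(u(t,0;A,\gamma)))\,dt$; taking the supremum over $\mathcal{U}(\overline u)$ gives $L\ge S$, whence $L=S$.

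For the dichotomy I would set $S_M\doteq\max_{(A,\gamma)\in\mathcal{U}^M(\overline u)}\int_0^T\mathcal{J}(f^{\mathcal{I}}(u(t,0;A,\gamma)))\,dt$, which is well defined by Theorem~\ref{exmas}, and note $S_M\le S$ as $\mathcal{U}^M\subset\mathcal{U}$. In case ($i$), assume $S$ is attained by some $(A^*,\gamma^*)\in\mathcal{U}(\overline u)$. Since every admissible control has $\mathbf{BV}$ components (for $A$ by definition~\eqref{eq:A}, for $\gamma$ by Remark~\ref{BV-flux}), the quantity $M_0\doteq\max_{j,i}\{\tv\{a^*_{ji}\},\tv\{\gamma^*_i\}\}$ is finite, and may be taken strictly positive; then $(A^*,\gamma^*)\in\mathcal{U}^M$ for every $M>M_0$, so $S_M\ge S$ and therefore $S_M=S=L$, and the supremum is attained as a maximum. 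In case ($ii$), assume $S$ is not attained. Then for each $M>0$ the maximizer of $S_M$ supplied by Theorem~\ref{exmas} cannot realize $S$, so $S_M<S=L$ for every $M>0$, and~\eqref{max-M-J} admits no maximizer. As the supremum is either attained or not, these two mutually exclusive cases are exhaustive.

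I expect no serious technical obstacle, since the heavy lifting—compactness of the flux-trace sets and existence of penalized maximizers—has already been established in Theorem~\ref{compactness-G-M} and Theorem~\ref{prop:min-sup}. The only point requiring a little care is the inequality $L\ge S$: it must be obtained controlwise and only then passed to the supremum, so that the \emph{a priori} unbounded total variations of the controls never enter the limit; it is precisely the monotonicity of $I(\delta)$ that makes $L$ well defined in the absence of any uniform $\mathbf{BV}$ bound.
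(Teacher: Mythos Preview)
Your argument is correct and actually more direct than the paper's. Both proofs first establish that $I(\delta)$ is nonincreasing so that the limit exists, but they diverge in how they identify the limit with $S$. The paper routes the comparison through the auxiliary functions $I_M(\delta)$: it invokes Theorem~\ref{thmdelta} to get $\lim_{\delta\to0}I_M(\delta)=\max_{\mathcal{U}^M}\int\mathcal{J}$, then sandwiches $I(\delta)$ between $I_M(\delta)$ and $\sup_M I_M(\delta)$ to conclude. Your controlwise argument---fixing $(A,\gamma)$, letting $\delta\to0$, and only then taking the supremum---bypasses $I_M$ and Theorem~\ref{thmdelta} entirely, which makes the proof self-contained. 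For the dichotomy, the paper simply observes that $M\mapsto\max_{\mathcal{U}^M}\int\mathcal{J}$ is nondecreasing with supremum $S$ and leaves the case split implicit, whereas you spell it out by asking whether $S$ is attained; the two formulations are equivalent. One small caveat: your appeal to Remark~\ref{BV-flux} to justify $\tv\{\gamma_i\}<\infty$ is not quite on the nose, since that remark is stated for interior points $x\in I_\ell$, not for the boundary trace at $x=0$; the paper itself relies on the same fact implicitly in writing $\sup_{\mathcal{U}}=\sup_{M}\max_{\mathcal{U}^M}$, so this is a shared technical point rather than a gap specific to your argument.
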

\begin{proof}
  Recalling the definitions~\eqref{admiss-controls} and
  because of Theorem~\ref{exmas}
  it follows that
\begin{equation}
\label{sup-J-max-M-J-1}
\sup_{(A,\,\gamma)\,\in\, \mathcal{U}(\overline u)\,} 
   \int_0^T \mathcal{J}\big(f^{\mathcal{I}}(u(t,0;\, A,\gamma))\big)dt
= \sup_{\ M>0\ } \max_{(A,\,\gamma)\,\in\, \mathcal{U}^{^M}(\overline u)\,}  
   \int_0^T \mathcal{J}\big(f^{\mathcal{I}}(u(t,0;\, A,\gamma))\big)dt\,,
\end{equation}
and that
\begin{equation*}
\label{sup-J-max-M-J-2}
M \quad \mapsto \quad \max_{(A,\,\gamma)\,\in\, \mathcal{U}^{^M}(\overline u)\,}  
   \int_0^T \mathcal{J}\big(f^{\mathcal{I}}(u(t,0;\, A,\gamma))\big)dt
\end{equation*}
is a non decreasing map. Hence,
 in order to establish the theorem it will be sufficient to prove~\eqref{lim-I-sup}.
To this end notice first that the map $\delta\to I(\delta)$ is non increasing. In fact, let $0<\delta_1<\delta_2$
and, by Theorem~\ref{prop:min-sup}-($ii$), consider a maximizer $(A^{\delta_2}, \gamma^{\delta_2})\in\mathcal{U}$ for $I(\delta_2)$.
Then, by definition~\eqref{I-def}, we find
\begin{equation*}
\begin{aligned}
I(\delta_2)&=
\int_0^T \mathcal{J}\big(f^{\mathcal{I}}(u(t,0;\, A^{\delta_2},\gamma^{\delta_2}))\big)dt
   -\delta_2 \, \sum_{i=1}^m \tv \left\{f\big(u_i\big(\cdot, 0;\, A^{\delta_2}, \gamma^{\delta_2}\big)\big)\right\}
   -\delta_2\tv\big\{A^{\delta_2}\big\}
   \\
   \noalign{\smallskip}
   &\le
   \int_0^T \mathcal{J}\big(f^{\mathcal{I}}(u(t,0;\, A^{\delta_2},\gamma^{\delta_2}))\big)dt
   -\delta_1 \, \sum_{i=1}^m \tv \left\{f\big(u_i\big(\cdot, 0;\, A^{\delta_2}, \gamma^{\delta_2}\big)\big)\right\}
   -\delta_1\tv\big\{A^{\delta_2}\big\}
   \\
   \noalign{\smallskip}
   &\le I(\delta_1)\,.
\end{aligned}
\end{equation*}
Therefore, the limit in~\eqref{lim-I-sup} exists.
Next observe that by definitions~\eqref{IM-def}, \eqref{I-def}
we have
\begin{equation*}
I(\delta) \geq I_M(\delta)\qquad\forall~M, \delta>0\,.
\end{equation*}
Hence,
taking first the limit as $\delta\to 0$ in both sides of the inequality, next considering the supremum over $M>0$
in the right-hand side
and relying on~\eqref{lim-IM}, \eqref{sup-J-max-M-J-1}, we find
\begin{equation}
\label{lim-I-sup-42}
\lim_{\delta\to 0} I(\delta) \geq 
\sup_{(A,\,\gamma)\,\in\, \mathcal{U}(\overline u)\,} 
   \int_0^T \mathcal{J}\big(f^{\mathcal{I}}(u(t,0;\, A,\gamma))\big)dt\,.
\end{equation}
On the other hand, by definitions~\eqref{IM-def}, \eqref{I-def} there hold
\begin{equation}
\label{Id-IMd}
I(\delta)\le \sup_M I_M(\delta)\,,
\end{equation}
\begin{equation}
\label{IM-42}
I_M(\delta) \leq \max_{(A,\,\gamma)\,\in\, \mathcal{U}^{^M}(\overline u)\,}  \int_0^T \mathcal{J}\big(f^{\mathcal{I}}(u(t,0;\, A,\gamma))\big)\,dt.
\qquad\forall~M, \delta>0\,.
\end{equation}
Thus, taking first the supremum over $M>0$ in both sides of~ \eqref{IM-42}
and relying on~\eqref{sup-J-max-M-J-1}, \eqref{Id-IMd},
next considering the
limit as $\delta\to 0$ in the left-hand side,
 we find
\begin{equation*}
\lim_{\delta\to 0} I(\delta) \leq 
\sup_{(A,\,\gamma)\,\in\, \mathcal{U}(\overline u)\,} 
   \int_0^T \mathcal{J}\big(f^{\mathcal{I}}(u(t,0;\, A,\gamma))\big)dt\,,
\end{equation*}
which together with~\eqref{lim-I-sup-42} yields~\eqref{lim-I-sup}, completing the proof of the theorem.
\end{proof}
\medskip

\section{Numerical simulations for a node\\
  with two incoming
  and two outgoing arcs}\label{sec:num} 

This section is devoted to present few numerical simulations in the case of a
node $J$ with two incoming and two outgoing arcs.
These numerical results seem to indicate the fact that 
the maximisation problem~\eqref{max-J}
may well have no optimal solution within the
ones  constructed by the standard Junction Riemann Solver
\cite{b-y, CGP, gp2, gp4, hr}.

In every numerical example, we model the incoming roads
$I_1$ and $I_2$ through the real interval
$(-5,0)$, while the outgoing roads $I_3$ and $I_4$ are described by the
interval $(0,5)$, so that the node is located at $x=0$;
see Figure~\ref{fig:junction-2x2}.
\begin{figure}
  \centering
    \begin{tikzpicture}[line cap=round,line join=round,>=triangle
    45,x=1cm,y=1cm]
    \clip(0,0) rectangle (11.1,3.1);

    \draw[fill] (5.5, 1.5) circle (.12cm);

    \draw (4., 2.25) -- (5.5, 1.5) -- (4., 0.75);

    \draw (8.5, 3.) -- (7., 2.25);
    \draw (8.5, 0.) -- (7., 0.75);

    \draw[->] (2.5, 3.) -- (4., 2.25);
    \draw[->] (2.5, 0.) -- (4., .75);

    \draw[<->] (7., 2.25) -- (5.5, 1.5) -- (7., 0.75);
    \node[inner sep = 0, anchor = north] at (3.5, 2.3) {$I_1$};
    \node[inner sep = 0, anchor = south] at (3.5, .7) {$I_2$};

    \node[inner sep = 0, anchor = north] at (7.5, 2.3) {$I_3$};
    \node[inner sep = 0, anchor = south] at (7.5, .7) {$I_4$};

    \node[inner sep = 0, anchor = west] at (0.5, 1.5) {$\mathcal I = 
      \left\{1, 2\right\}$};
    \node[inner sep = 0, anchor = east] at (10.5, 1.5) {$\mathcal O = 
      \left\{3, 4\right\}$};
  \end{tikzpicture}
  \caption{The junction considered in Section~\ref{sec:num}.
    The incoming roads are denoted by $I_1$ and $I_2$, while the outgoing
    ones are $I_3$ and $I_4$.}
  \label{fig:junction-2x2}
\end{figure}
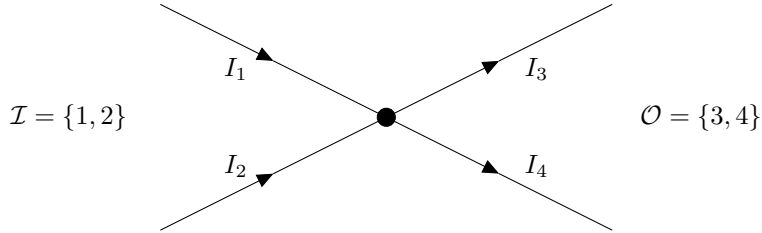

In each arc, the Lighthill-Whitham-Richards model~(\ref{conlaw})
is considered with the
flux function given by $f(u) = 4 u (1-u)$, so that the
set $\Omega$ of all the possible densities  is the interval $[0,1]$.
The solution in each arc is computed by the Godunov method;
see for example~\cite[Section~12.1]{leveque}, \cite[Chapter~III]{god-rav}
or~\cite[Chapter~3]{holden-risebro}. We use a uniform spatial mesh
with length $\Delta x = 0.05$ and a non-uniform time mesh with length
$\Delta t$, calculated in such a way the classic
CFL condition is satisfied; see~\cite{c-f-l}.
Note that it is also possible using a uniform time mesh. However,
in view of future numerical studies, we
choice a non uniform one for improving convergence.
The simulations are done
in the time interval $(0,T)$.

Concerning the optimization of~(\ref{max-J}),
we assume that the distributional matrix $A$ satisfying~(\ref{distr-rules})
is a priori fixed.
In such a way, in the optimization problem~(\ref{max-J})
we only regard  as junction controls
the incoming fluxes $g = \left(g_1, g_2\right)$.
In order to numerically approximate an 
optimal control,
we perform a heuristic recursive procedure based on the following steps.
\begin{enumerate}
\item Consider a piecewise constant initial control $g = (g_1, g_2)$,
  with a fixed number, namely $M$, of points of discontinuity.

\item On each time interval where the control is constant, we perform a set of
  variations. For each variation, we compute the corresponding 
  solution and cost~(\ref{eq:numeric-cost}).

\item Among all the possible variations, we select one that
  maximizes~(\ref{eq:numeric-cost}). In this way, we obtain
  a piecewise constant control
  $\overline{g}$, which acts not worse than $g$.
\end{enumerate}
According to the variational formulation of the min-max problem given
by Theorem~\ref{thmdelta}, for finding a solution which maximizes~(\ref{max-J})
and minimizes~(\ref{min-max-J}), we  shall maximize the cost
\begin{equation}
  \label{eq:numeric-cost}
  \int_0^T \mathcal J(f(u_1(t, 0)),f(u_2(t, 0))) dt 
  - \delta \sum_{i=1}^2 \tv_{[0,T]}f(u_i(\cdot, 0))
\end{equation}
with $\delta>0$ sufficiently small or $\delta = 0$.

\subsection{Rarefaction and shock approaching the node (case $1$)}
\label{sse:shock-rar}


Here we consider the case of a rarefaction and a shock
in an incoming arc interacts
with the node. The initial data are given by
\begin{equation*}
  \overline u_1(x) = \left\{
    \begin{array}{l@{\quad}l}
      0.47 & \textrm{ if } x < -2.1,
      \\
      0.25 & \textrm{ if } -2.1 < x < -1,
      \\
      0.5 & \textrm{ if } x > -1,
    \end{array}
  \right. \qquad
  \overline u_2(x) = 0.5, \qquad
  \overline u_3(x) = 0.1, \qquad
  \overline u_4(x) = 0.1, 
\end{equation*}
the distribution matrix $A$ is given by
\begin{equation*}
  A = \left(
    \begin{array}{cc}
      0.5 & 0.3
      \\
      0.5 & 0.7
    \end{array}
  \right),
\end{equation*}
and $\mathcal J(f_1, f_2) = f_1 + f_2$.
Moreover, we consider the following choice of parameters:
$T = 5$, $M = 20$ and $\delta = 0.2$.
In Figure~\ref{fig:s4.1:optimal-sol} the numerical optimal solution is
represented. Note that the solution is qualitatively different from 
that obtained with the classical Riemann solver
at the node; see Figure~\ref{fig:s4.1:RS-sol}. 
Indeed at about $t \sim 1$ a shock wave with negative speed
in the arc $I_1$ is generated. This wave has almost zero speed, at time
$t \sim 2.5$ enters a little more in the domain and comes back to the
boundary at time $t \sim 4$.
The numeric cost~(\ref{eq:numeric-cost}) for the optimal solution
is approximately $8.415$, since
\begin{equation*}
    \int_0^T \!\!\mathcal J(f(u_1(t, 0)),f(u_2(t, 0))) dt \sim 8.498
    \quad
    \tv_{[0,T]}f(u_1(\cdot, 0)) \sim 0.250
    \quad
    \tv_{[0,T]}f(u_2(\cdot, 0)) \sim 0.163,
\end{equation*}
while the cost for the solution obtained with the corresponding Riemann
solvers, see Figure~\ref{fig:s4.1:RS-sol}, is approximately $8.389$.
In Figures~\ref{fig:ss3:optimal-fluxes-sol} and~\ref{fig:ss3:RS-fluxes}
respectively the optimal fluxes and the fluxes obtained with the Riemann
solver are drawn. Note in particular that the optimal fluxes have less
total variation than the fluxes obtained with the Riemann solver.
\begin{figure}
  \centering
  \includegraphics[width=7cm, height = 3cm]{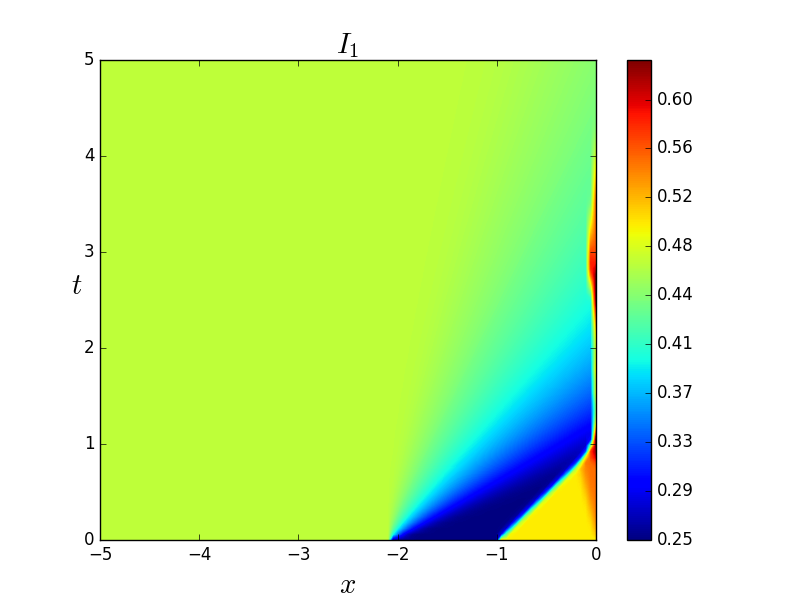}
  \includegraphics[width=7cm, height = 3cm]{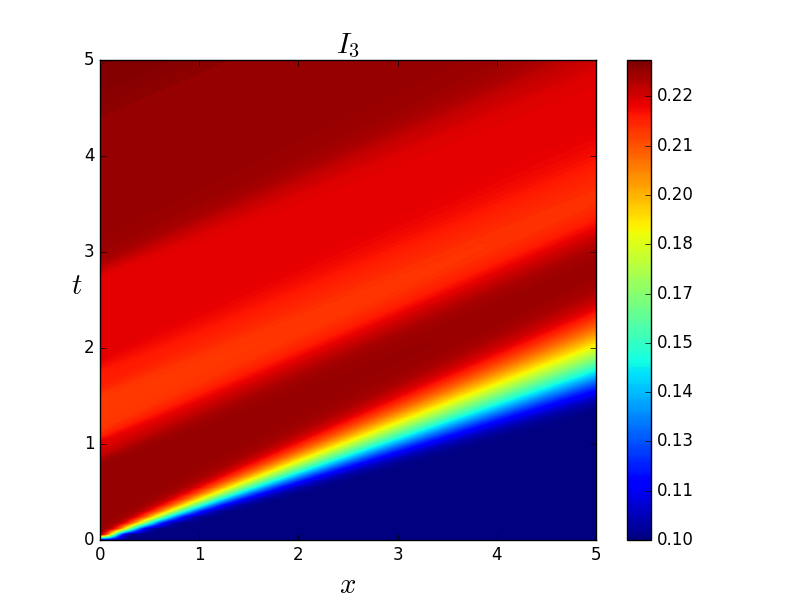}
  \\
  \includegraphics[width=7cm, height = 3cm]{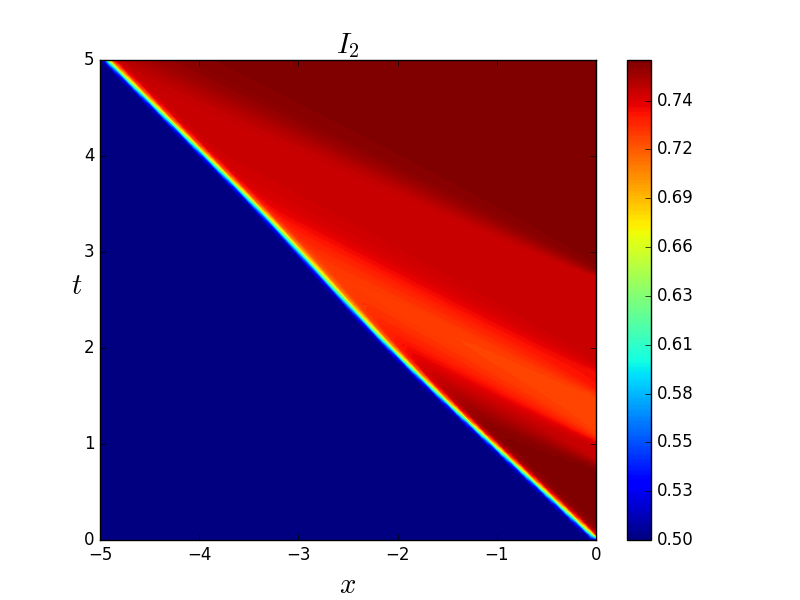}
  \includegraphics[width=7cm, height = 3cm]{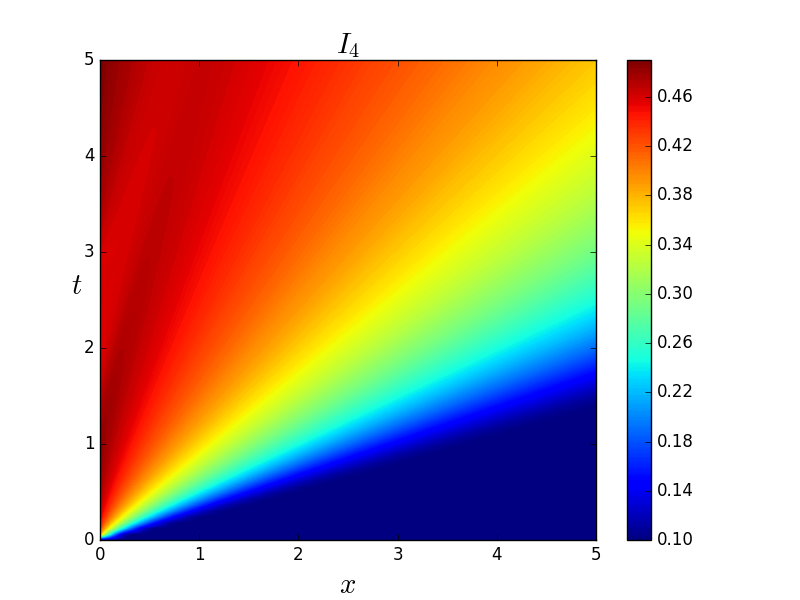}
  \caption{The numerical optimal solution obtained in
    Subsection~\ref{sse:shock-rar}. In the first line, the solutions in
    $I_1$ and $I_3$. In the second line, the solutions in
    $I_2$ and $I_4$. (Color online)}
  \label{fig:s4.1:optimal-sol}
\end{figure}
\begin{figure}
  \centering
  \includegraphics[width=7cm, height = 3cm]{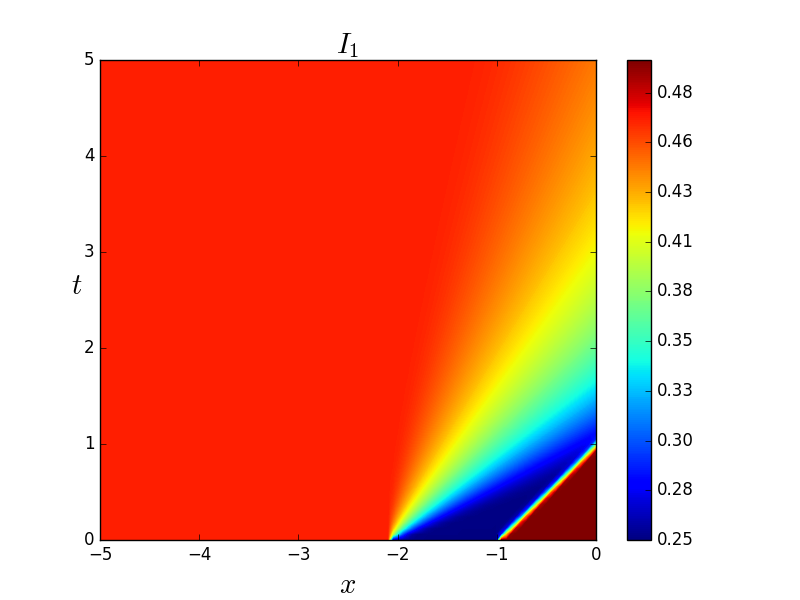}
  \includegraphics[width=7cm, height = 3cm]{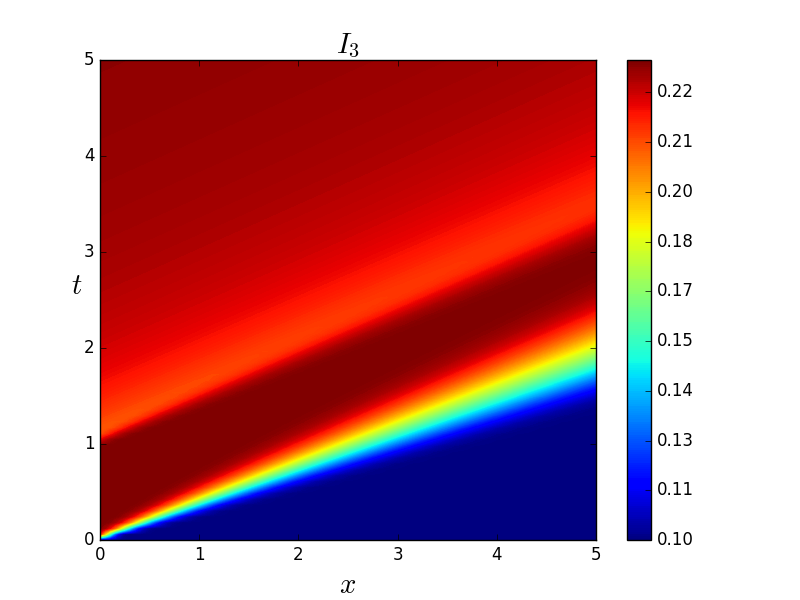}
  \\
  \includegraphics[width=7cm, height = 3cm]{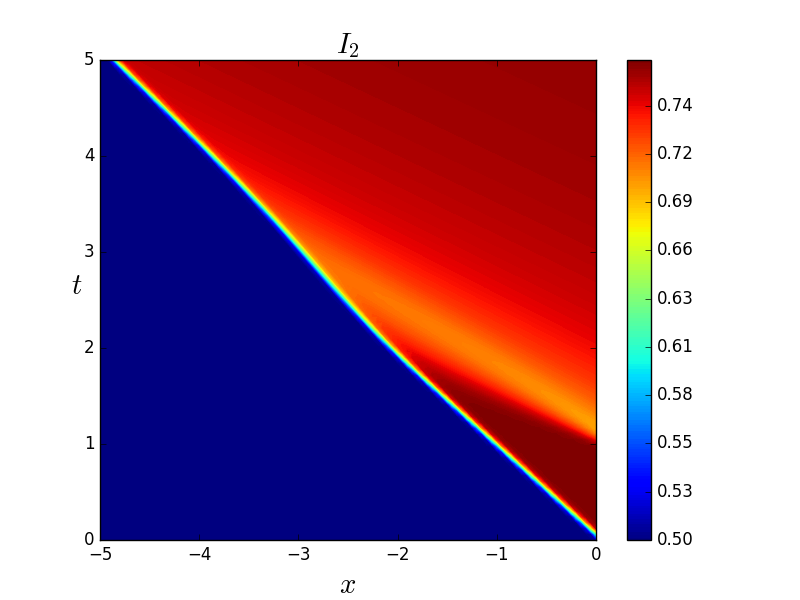}
  \includegraphics[width=7cm, height = 3cm]{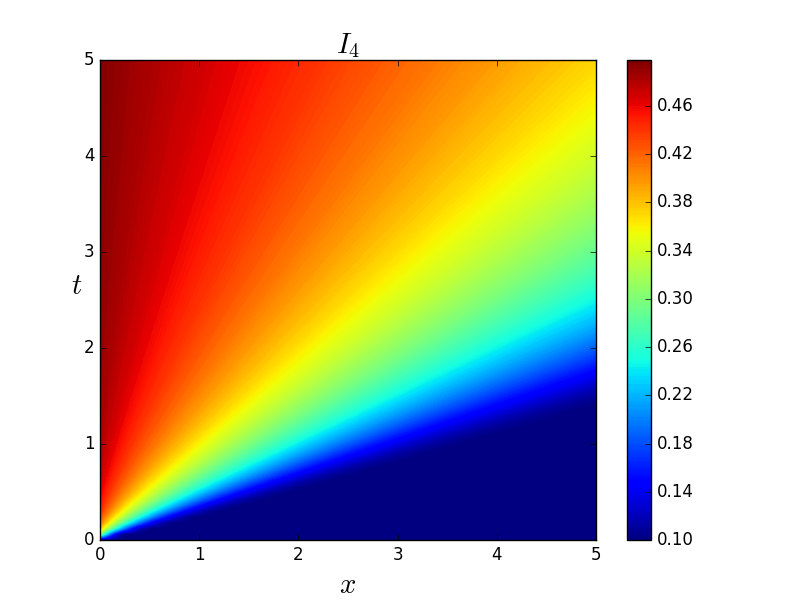}
  \caption{The solution, for Subsection~\ref{sse:shock-rar},
    obtained with the Riemann solver. In the first line, the solutions in
    $I_1$ and $I_3$. In the second line, the solutions in
    $I_2$ and $I_4$.  (Color online)}
  \label{fig:s4.1:RS-sol}
\end{figure}
\begin{figure}
  \centering
  \includegraphics[width=5cm, height = 2cm]{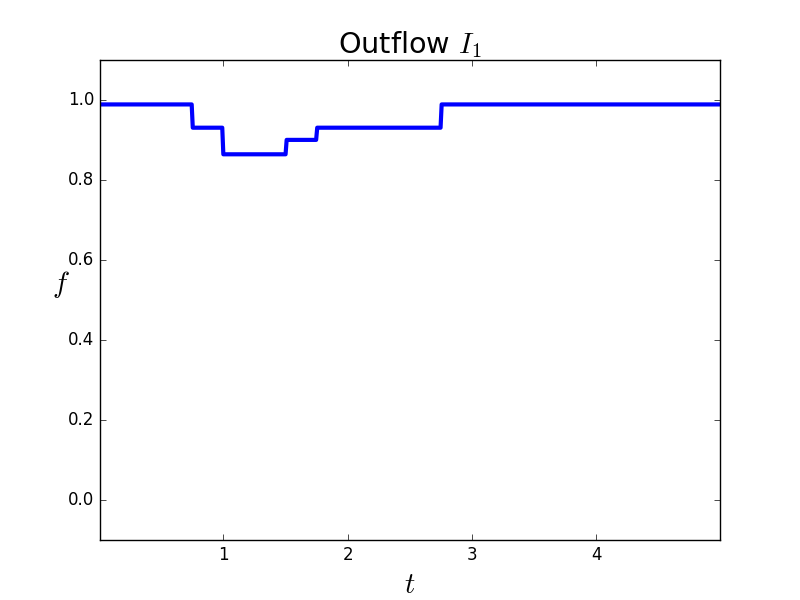}
  \includegraphics[width=5cm, height = 2cm]{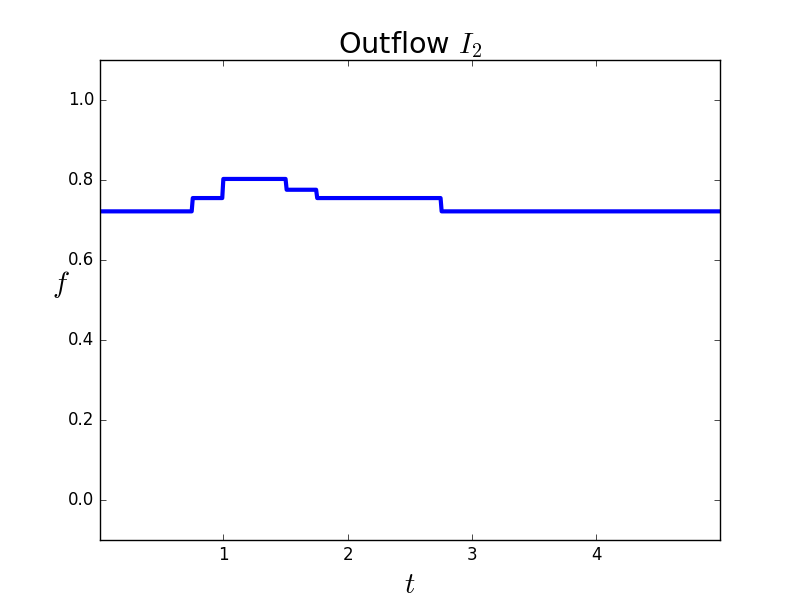}
  \caption{The numerical optimal fluxes at the node in the incoming arcs $I_1$
    and $I_2$, obtained in
    Subsection~\ref{sse:shock-rar}.}
  \label{fig:ss3:optimal-fluxes-sol}
\end{figure}
\begin{figure}
  \centering
  \includegraphics[width=5cm, height = 2cm]{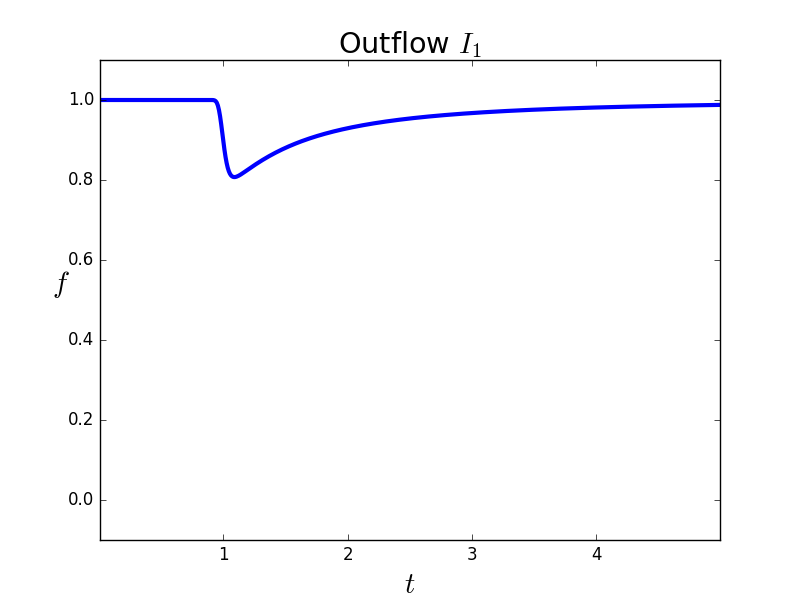}
  \includegraphics[width=5cm, height = 2cm]{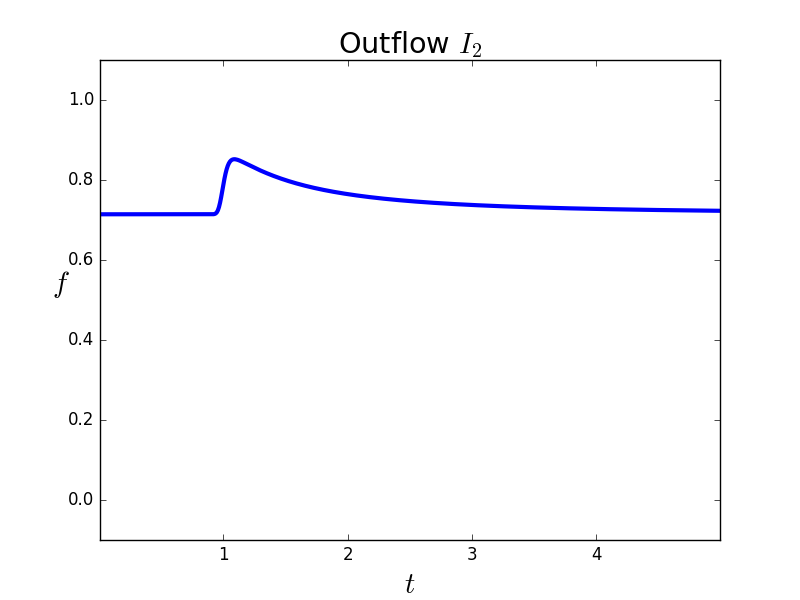}
  \caption{The fluxes at the node in the incoming arcs of solution,
    for Subsection~\ref{sse:shock-rar},
    obtained with the Riemann solver.}
  \label{fig:ss3:RS-fluxes}
\end{figure}

\subsection{Rarefaction and shock approaching the node (case $2$)}
\label{sse:constant-ic}


As in Subsection~\ref{sse:shock-rar}, we present the case of a rarefaction
and a shock approaching the node from the incoming arc $I_1$.
More precisely, we have that the initial data are given by
\begin{equation*}
  \overline u_1(x) = \left\{
    \begin{array}{ll}
      0.25 & \textrm{ if } x < -2. \textrm{ or } -1.5 < x < -1,
      \\
      0. & \textrm{ if } -2. < x < -1.5 \textrm{ or } -1. < x < -.5,
      \\
      0.5 & \textrm{ if } x > -.5,
    \end{array}
  \right. \qquad
  \begin{array}{l}
    \overline u_2(x) = 0.5, 
    \\
    \overline u_3(x) = 0.1, 
    \\
    \overline u_4(x) = 0.1, 
  \end{array}
\end{equation*}
the distribution matrix is given by
\begin{equation*}
  A = \left(
    \begin{array}{cc}
      0.5 & 0.3
      \\
      0.5 & 0.7
    \end{array}
  \right)
\end{equation*}
We here maximize the product of the incoming fluxes, i.e 
$\mathcal J(f_1, f_2) = 50 \cdot f_1 f_2$.
Moreover, we consider the following parameters:
$T = 1.1$, $M = 20$ and $\delta = 0.4$.
In Figure~\ref{fig:s3.2:optimal-fluxes} the numerical optimal solution is
represented. Note that two shocks are generated at the junction at
times $t \sim 0.2$ and $t \sim 0.4$.
These shocks interact with shocks and rarefaction waves generated by the
initial datum and come back to the junction.
The numeric cost~(\ref{eq:numeric-cost}) for this solution
is approximately $32.39$, since
\begin{equation*}
    \int_0^T \!\!\mathcal J(f(u_1(t, 0)),f(u_2(t, 0))) dt \sim 32.68
    \quad
    \tv_{[0,T]}f(u_1(\cdot, 0)) \sim 0.91
    \quad
    \tv_{[0,T]}f(u_2(\cdot, 0)) \sim 0.53,
\end{equation*}
while the cost for the solution obtained with the corresponding Riemann
solvers, see Figure~\ref{fig:s4.2:RS-sol}, is approximately $30.73$.
In Figures~\ref{fig:ss1:optimal-fluxes-sol} and~\ref{fig:ss1:RS-fluxes}
respectively the optimal fluxes and the fluxes obtained with the Riemann
solver are drawn. Again the optimal fluxes have less
total variation than the fluxes obtained with the Riemann solver.
We observe that, in this case compared with case 1,
the optimal cost has a
larger difference from the cost associated to the solution constructed with
the classical Riemann solver. This is due both to the choice of $\mathcal J$
and to the choice of the initial datum, producing a more complex wave pattern.
\begin{figure}
  \centering
  \includegraphics[width=7cm, height = 3cm]{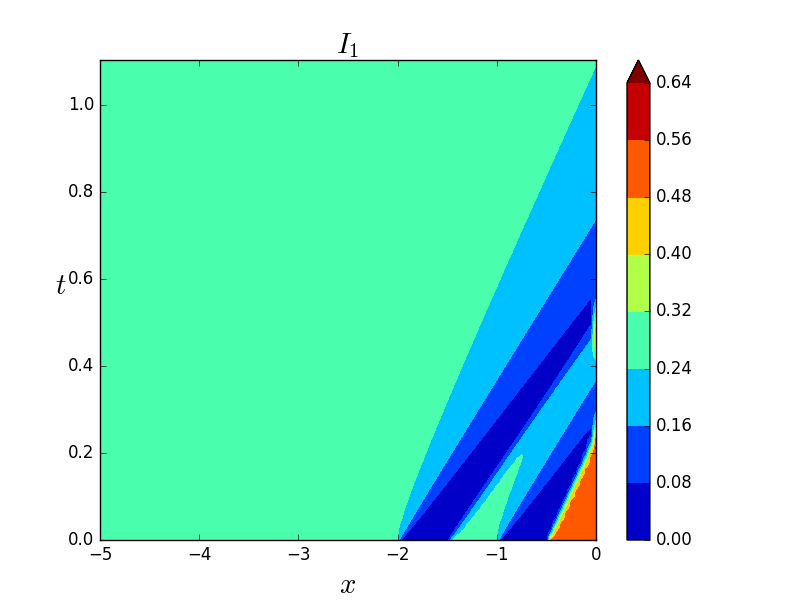}
  \includegraphics[width=7cm, height = 3cm]{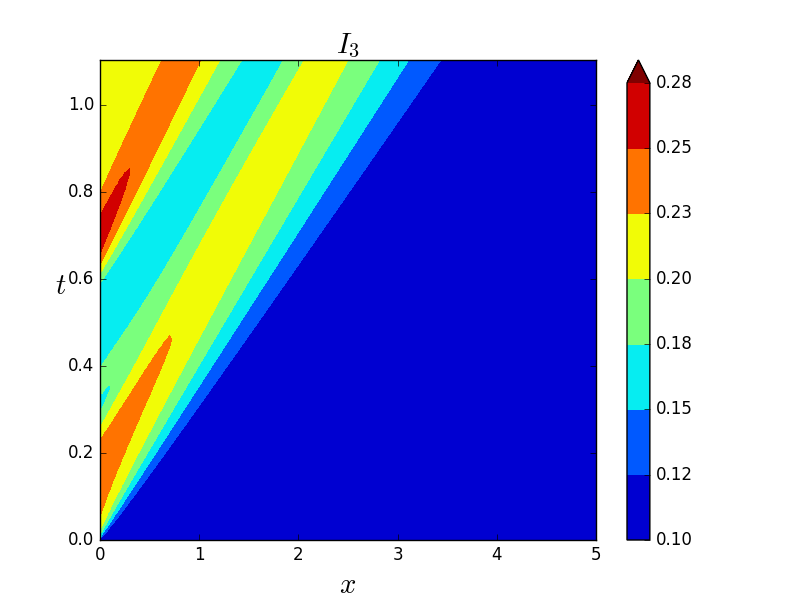}
  \\
  \includegraphics[width=7cm, height = 3cm]{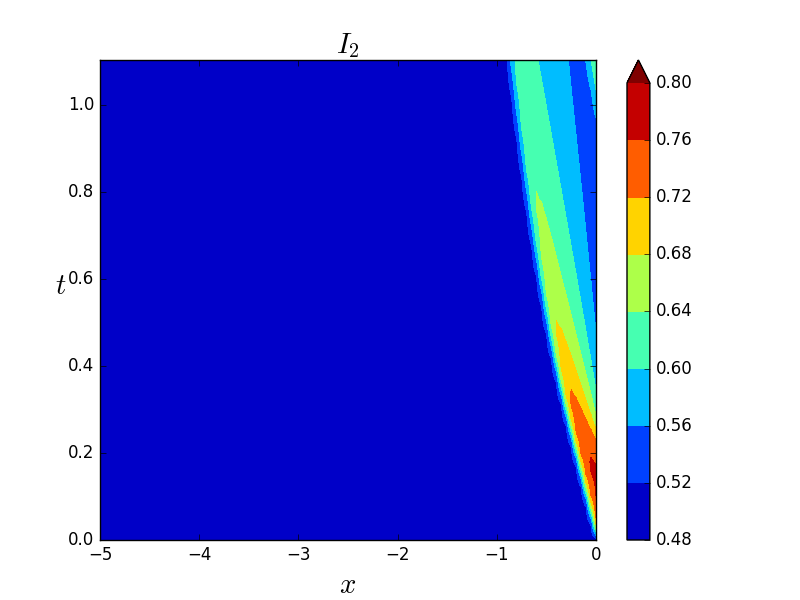}
  \includegraphics[width=7cm, height = 3cm]{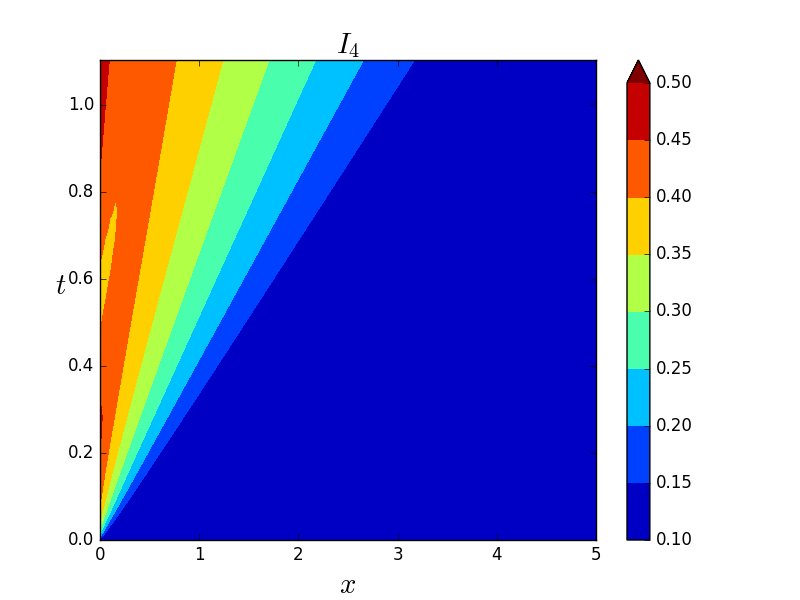}
  \caption{The numerical optimal solution obtained in
    Subsection~\ref{sse:constant-ic}. In the first line, the solutions in
    $I_1$ and $I_3$. In the second line, the solutions in
    $I_2$ and $I_4$. (Color online)}
  \label{fig:s3.2:optimal-fluxes}
\end{figure}
\begin{figure}
  \centering
  \includegraphics[width=7cm, height = 3cm]{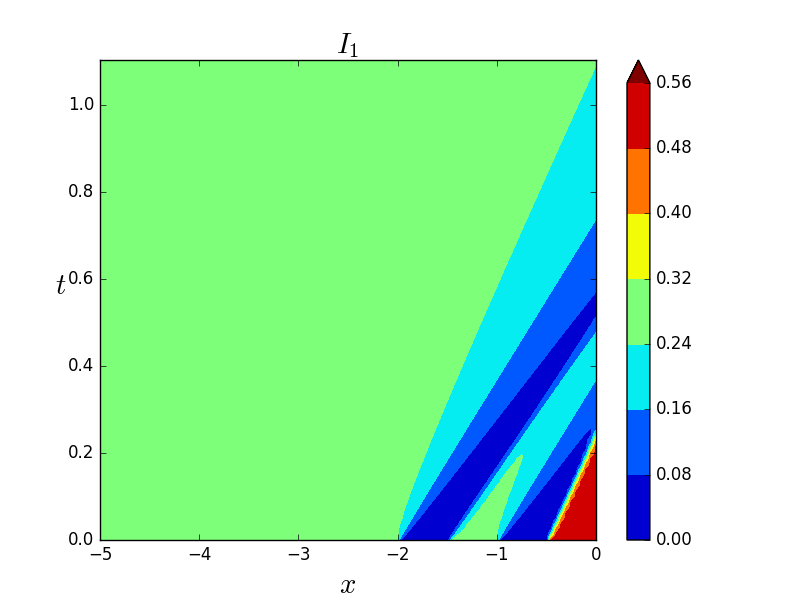}
  \includegraphics[width=7cm, height = 3cm]{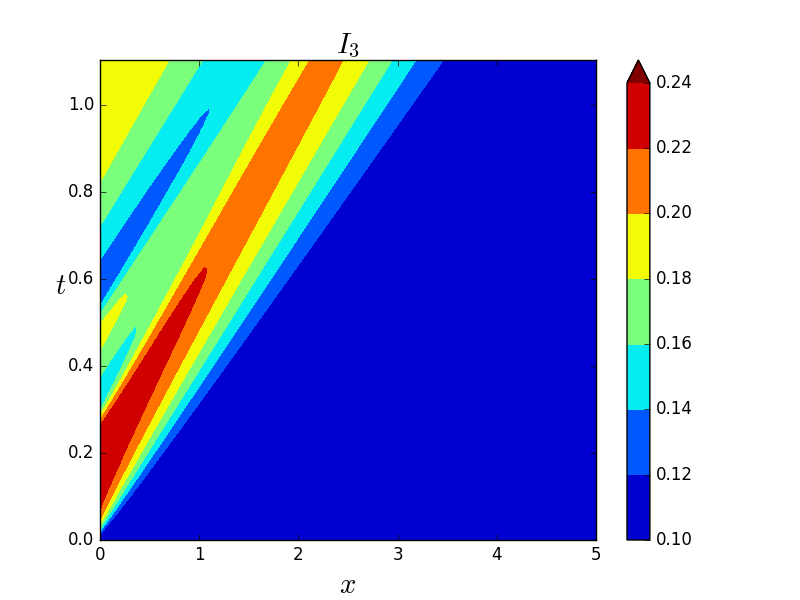}
  \\
  \includegraphics[width=7cm, height = 3cm]{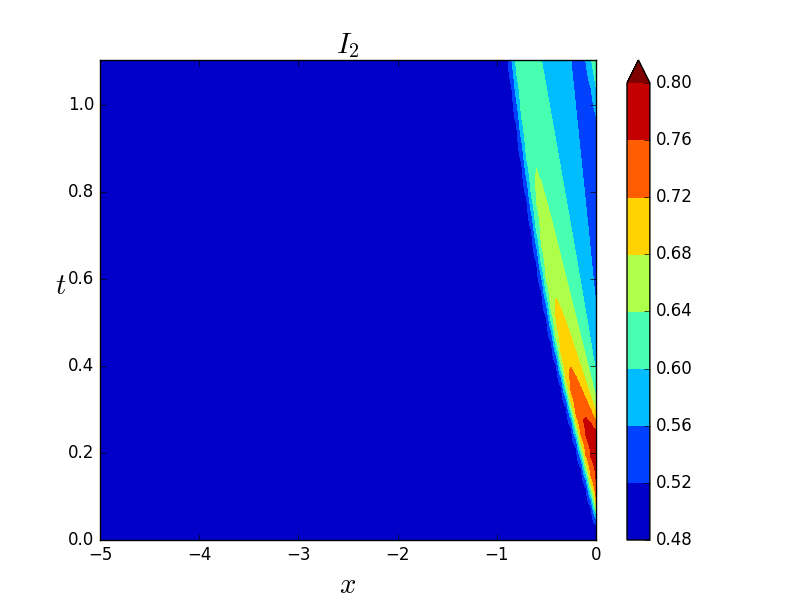}
  \includegraphics[width=7cm, height = 3cm]{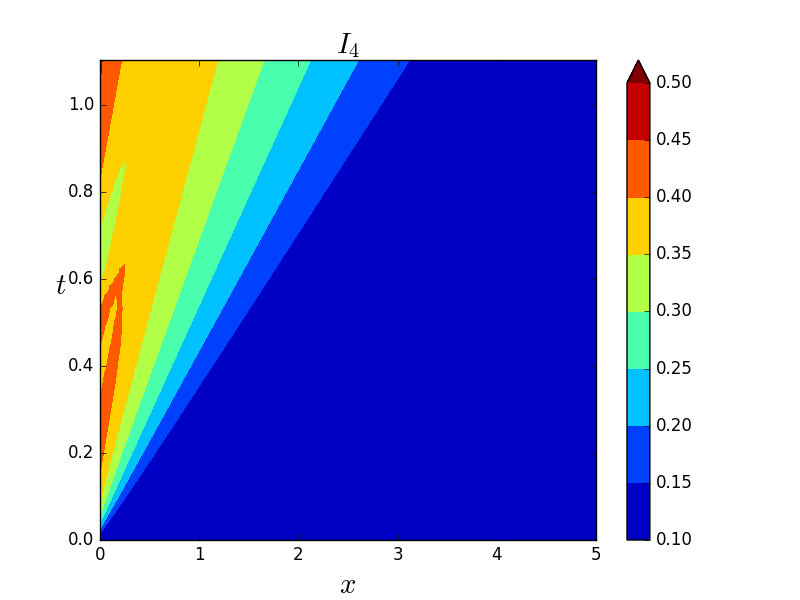}
  \caption{The solution, for Subsection~\ref{sse:constant-ic},
    obtained with the Riemann solver. In the first line, the solutions in
    $I_1$ and $I_3$. In the second line, the solutions in
    $I_2$ and $I_4$. (Color online)}
  \label{fig:s4.2:RS-sol}
\end{figure}

\begin{figure}
  \centering
  \includegraphics[width=5cm, height = 2cm]{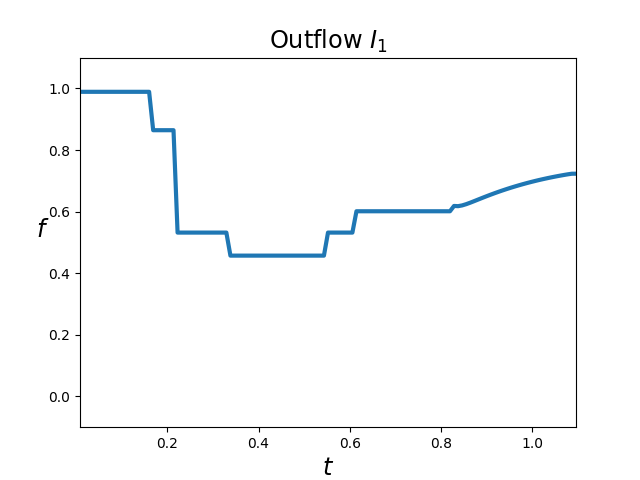}
  \includegraphics[width=5cm, height = 2cm]{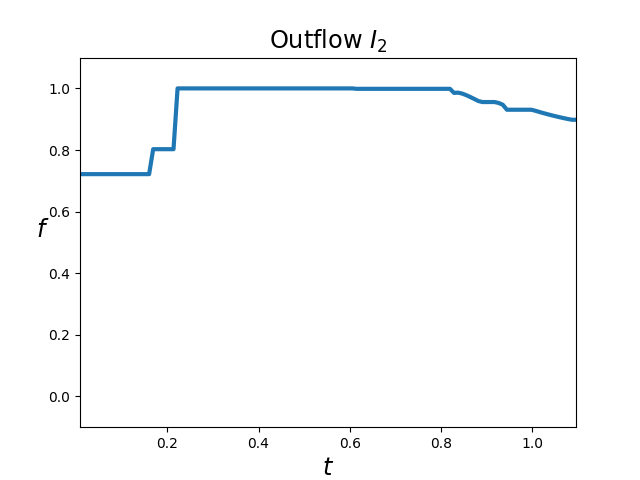}
  \caption{The numerical optimal fluxes at the node in the incoming arcs $I_1$
    and $I_2$, obtained in
    Subsection~\ref{sse:constant-ic}.}
  \label{fig:ss1:optimal-fluxes-sol}
\end{figure}
\begin{figure}
  \centering
  \includegraphics[width=5cm, height = 2cm]{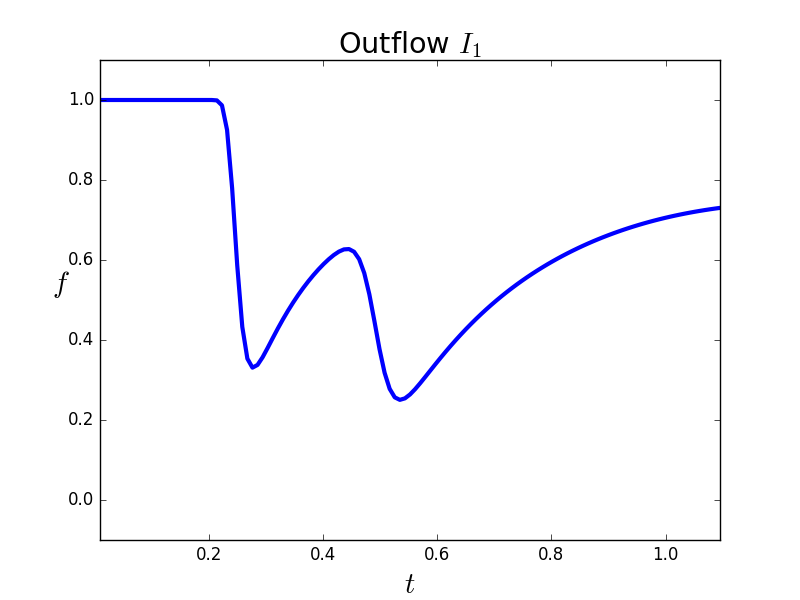}
  \includegraphics[width=5cm, height = 2cm]{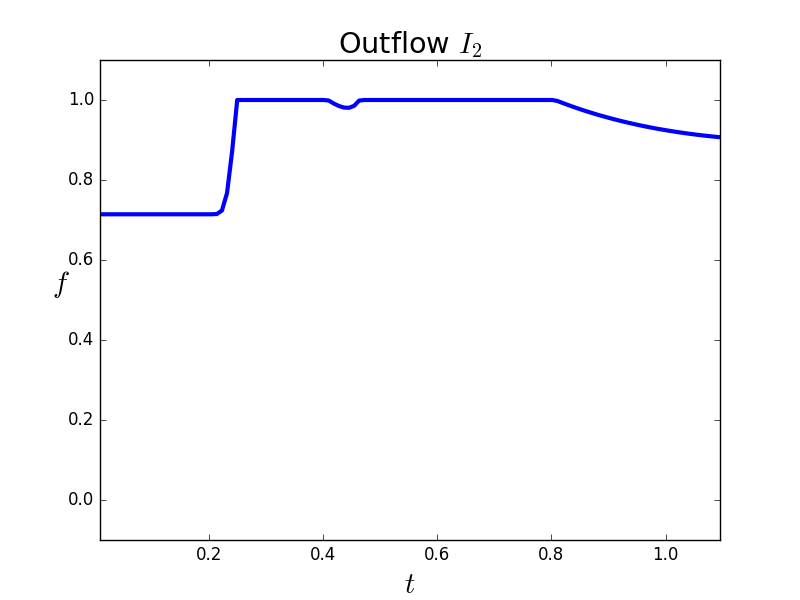}
  \caption{The fluxes at the node in the incoming arcs of solution,
    for Subsection~\ref{sse:constant-ic},
    obtained with the Riemann solver.}
  \label{fig:ss1:RS-fluxes}
\end{figure}

\subsection{Sinusoidal initial conditions}
\label{sse:shock}


Here we consider the case of a sinusoidal initial condition. Indeed we consider
the initial datum
\begin{equation*}
  \overline u_1(x) = 0.5 + 0.3 \cdot \sin(2x),
  \qquad
  \overline u_2(x) = 0.2, \qquad
  \overline u_3(x) = 0.75 + 0.2 \cdot \cos(x), \qquad
  \overline u_4(x) = 0.1, 
\end{equation*}
the distributional matrix
\begin{equation*}
  A = \left(
    \begin{array}{cc}
      0.5 & 0.3
      \\
      0.5 & 0.7
    \end{array}
  \right),
\end{equation*}
and $\mathcal J(f_1, f_2) = f_1 f_2$.
Moreover, we consider the following choice of parameters:
$T = 5$, $M = 20$ and $\delta = 0$, so that the total variation of the fluxes is not
taken into account.
In Figure~\ref{fig:s4.3:optimal-fluxes} the numerical optimal solution is
represented. Note that in the arc $I_2$ a shock with negative speed
appears at time $t \sim 0$ and comes back to the node at time $t \sim 1$.
Moreover in the arc $I_1$ the right-hand shock behaves in a different way with
respect to the right-hand shock of 
the solution obtained with the Riemann solver;
see Figure~\ref{fig:s4.3:RS-sol}.
The numeric cost~(\ref{eq:numeric-cost}) for the optimal solution
is approximately $3.053$
while the cost for the solution obtained with the corresponding Riemann
solvers, see Figure~\ref{fig:s4.3:RS-sol}, is approximately $3.015$.
In Figures~\ref{fig:ss2:optimal-fluxes-sol} and~\ref{fig:ss2:RS-fluxes}
respectively the optimal fluxes and the fluxes obtained with the Riemann
solver are drawn. Here we observe that
the total variation of the optimal fluxes is bigger
than the total variation of the solution obtained through the Riemann solver.
This is not an unexpected feature, since optimal costs are achieved with the
price of bigger oscillations.
\begin{figure}
  \centering
  \includegraphics[width=7cm, height = 3cm]{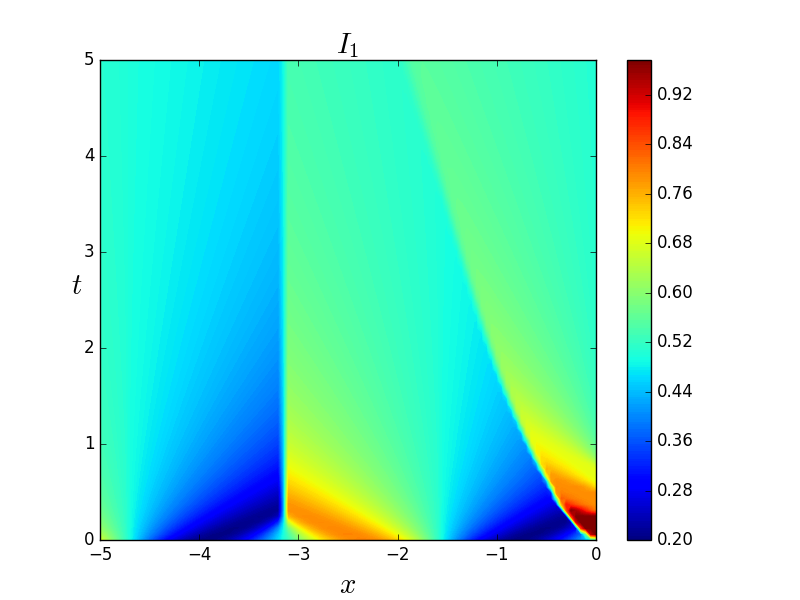}
  \includegraphics[width=7cm, height = 3cm]{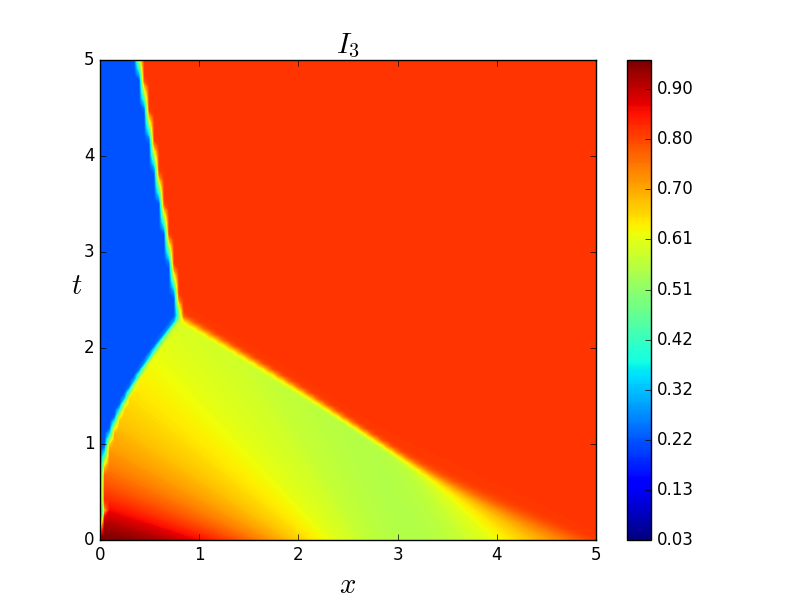}
  \\
  \includegraphics[width=7cm, height = 3cm]{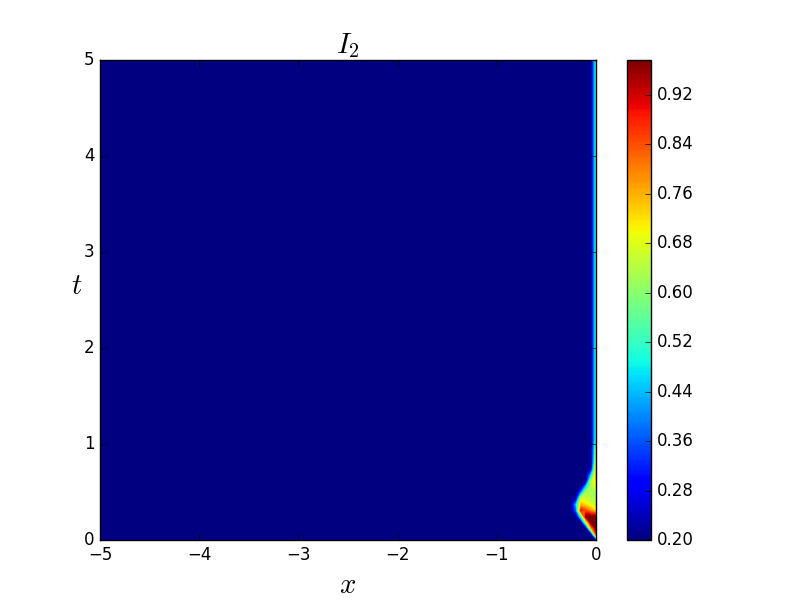}
  \includegraphics[width=7cm, height = 3cm]{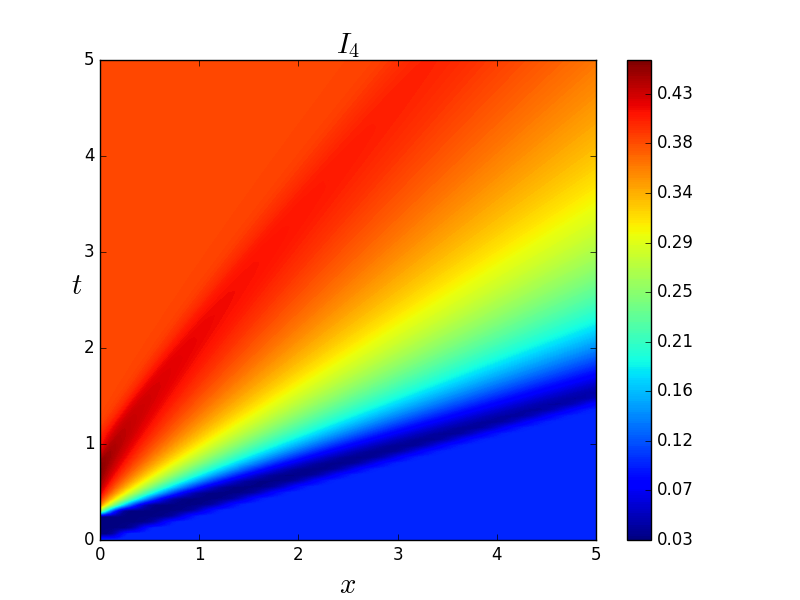}  
  \caption{The numerical optimal solution obtained in
    Subsection~\ref{sse:shock}.  In the first line, the solutions in
    $I_1$ and $I_3$. In the second line, the solutions in
    $I_2$ and $I_4$. (Color online)}
  \label{fig:s4.3:optimal-fluxes}
\end{figure}
\begin{figure}
  \centering
  \includegraphics[width=7cm, height = 3cm]{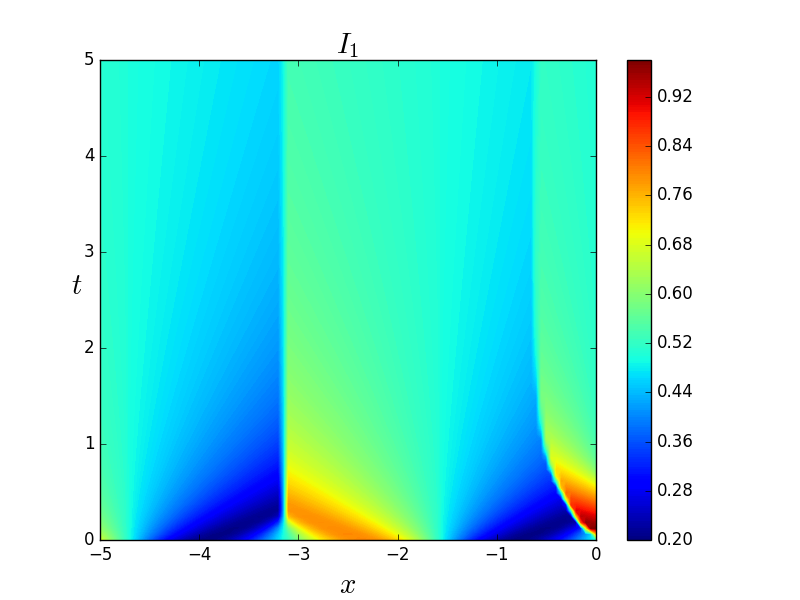}
  \includegraphics[width=7cm, height = 3cm]{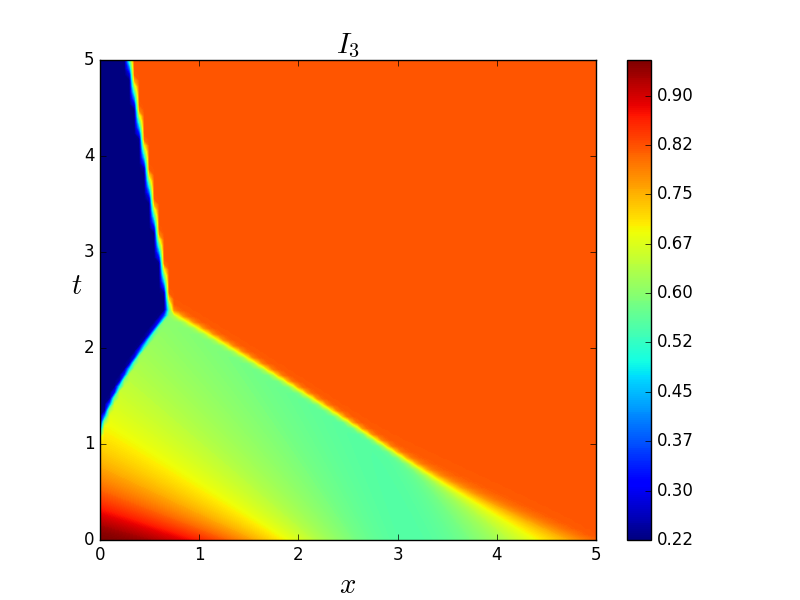}
  \\
  \includegraphics[width=7cm, height = 3cm]{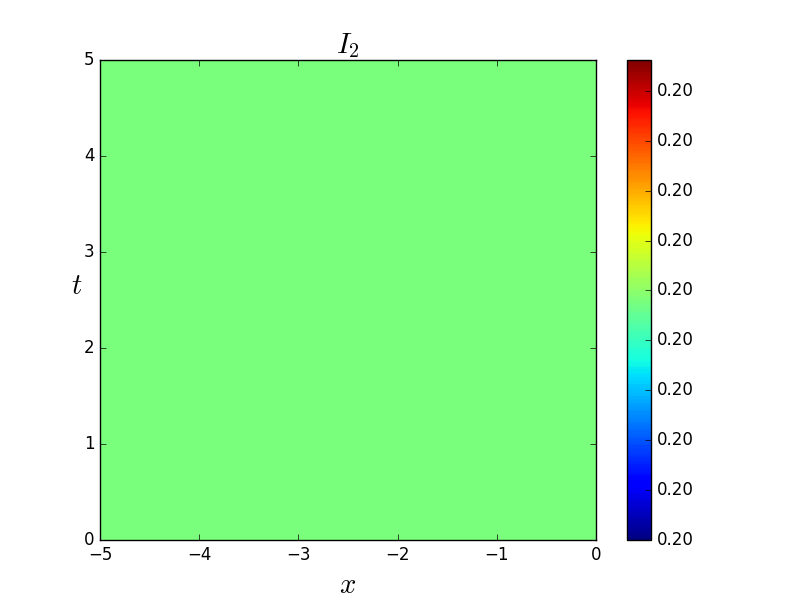}
  \includegraphics[width=7cm, height = 3cm]{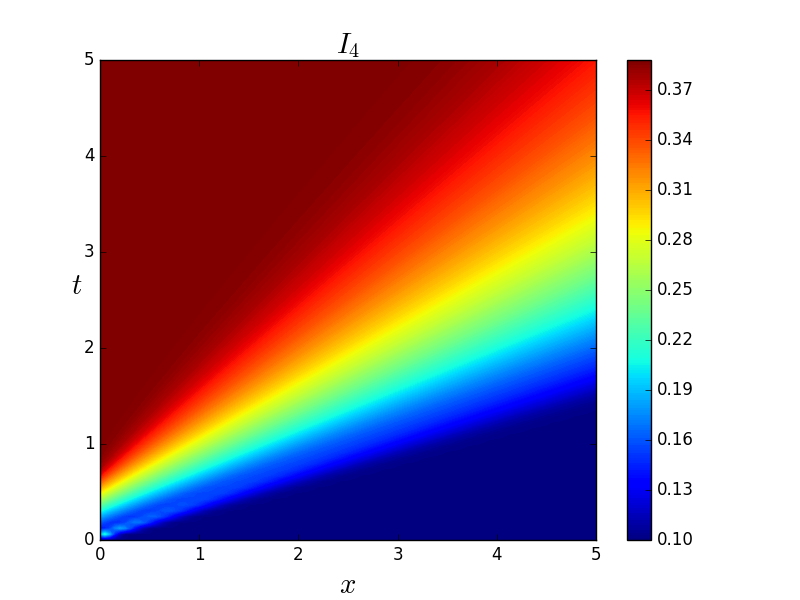}
  \caption{The solution, for Subsection~\ref{sse:shock},
    obtained with the Riemann solver. (Color online)}
  \label{fig:s4.3:RS-sol}
\end{figure}

\begin{figure}
  \centering
  \includegraphics[width=5cm, height = 2cm]{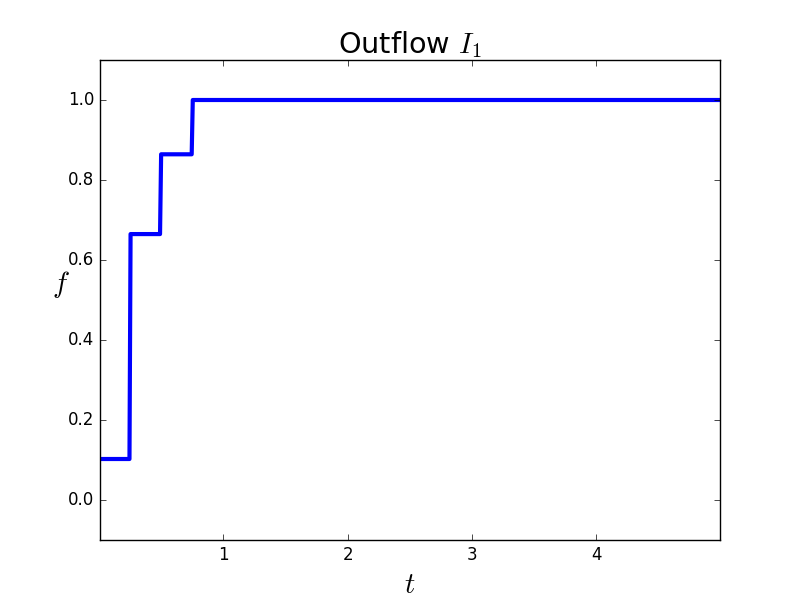}
  \includegraphics[width=5cm, height = 2cm]{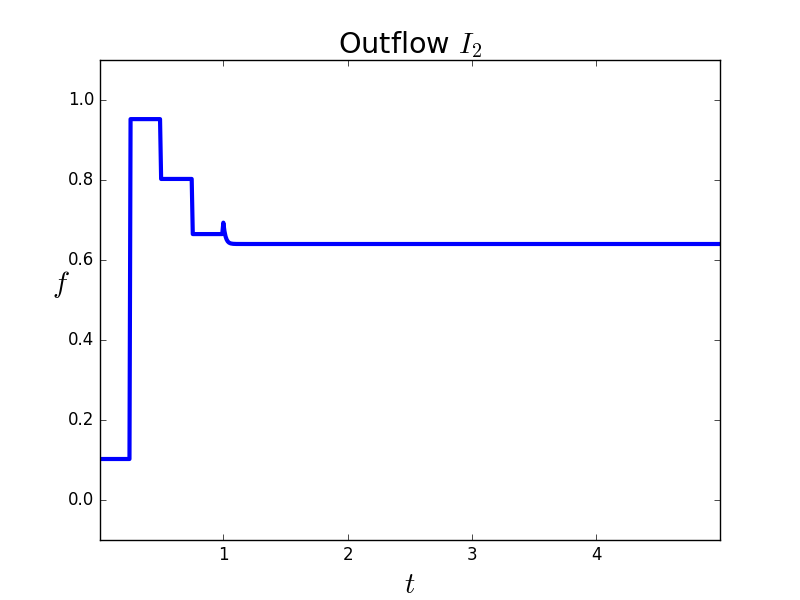}
  \caption{The numerical optimal fluxes at the node in the incoming arcs $I_1$
    and $I_2$, obtained in
    Subsection~\ref{sse:shock}.}
  \label{fig:ss2:optimal-fluxes-sol}
\end{figure}
\begin{figure}
  \centering
  \includegraphics[width=5cm, height = 2cm]{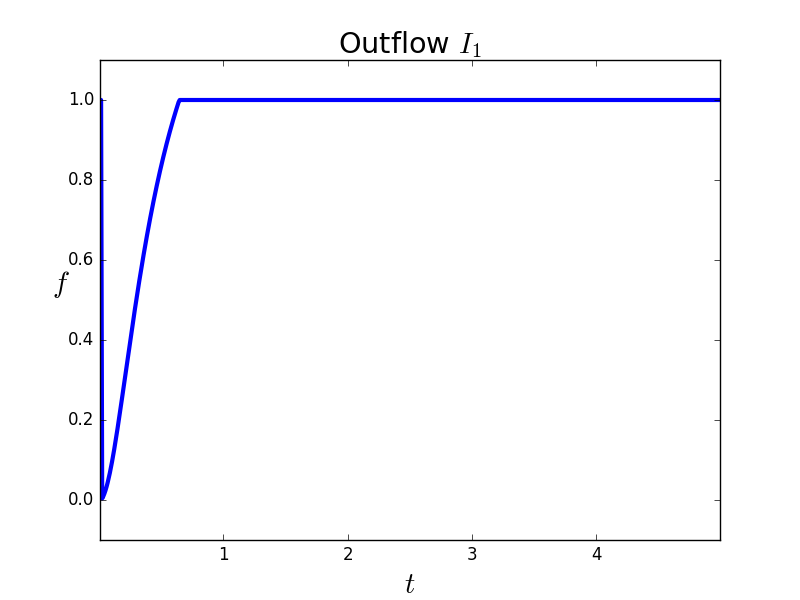}
  \includegraphics[width=5cm, height = 2cm]{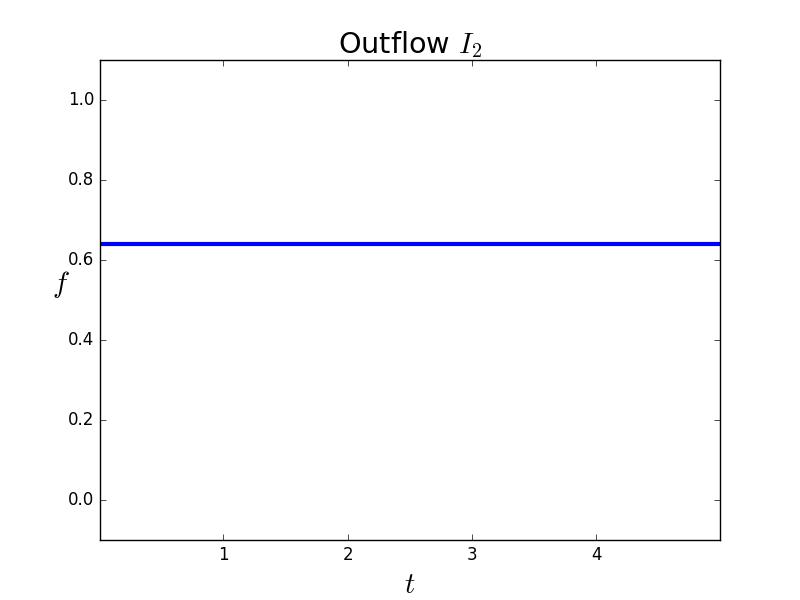}
  \caption{The fluxes at the node in the incoming arcs of solution,
    for Subsection~\ref{sse:shock},
    obtained with the Riemann solver.}
  \label{fig:ss2:RS-fluxes}
\end{figure}



%
%


\begin{thebibliography}{99}
  \bibitem{at} Y. Achdou, S. Oudet, N.  Tchou, 
  \textsl{Hamilton-Jacobi equations for optimal control on junctions and networks. }
ESAIM Control Optim. Calc. Var.  \textbf{21} (2015), no. 3, 876--899. 


\bibitem{AMVG1} Adimurthi, S. Mishra and G. D. Veerappa Gowda, 
\textsl{Optimal entropy solutions for
conservation laws with discontinuous flux-functions}, J. Hyperbolic Differ. Equ. 2
(2005), no. 4, 783--837.


  

\bibitem{AFP} L. Ambrosio, N. Fusco and D. Pallara, Functions of Bounded Variation and Free Discontinuity Problems. Oxford
Mathematical Monographs. New York: The Clarendon Press, Oxford University Press, 2000.

\bibitem{accg} F. Ancona, A. Cesaroni, G.M.  Coclite and M. Garavello,
\textsl{On the structure of optimal solutions for conservation
laws models at a junction with one or  two incoming and  outgoing arcs}, in preparation.


  %
\bibitem{am1} F. Ancona and A. Marson, \textsl{On the attainable set for
    scalar nonlinear conservation laws with boundary control}, SIAM
  J. Control Optim. \textbf{36} (1998), no. 1, 290--312.
 %

\bibitem{an-ca} B. Andreianov and C Canc\'es,
 \textsl{On interface transmission conditions for conservation laws with discontinuous flux of general shape},
 J. Hyperbolic Differ. Equ. \textbf{12} (2015), no. 2, 343--384. 
   
\bibitem{acd} B. Andreianov, G. M. Coclite and C. Donadello, \textsl{Well-Posedness for a monotone solver for traffic junctions},  Discrete Contin. Dyn. Syst.
\textbf{37} (2017) no. 11, 5913--5942.

\bibitem{a-d-r}
B. Andreianov, C. Donadello and M. D. Rosini, \textsl{Crowd dynamics and conservation
laws with nonlocal constraints and capacity drop}, Math. Models Methods Appl. Sci.
24(13) (2014) 2685-2722.

\bibitem{a-g-s} B. Andreianov, P. Goatin and N. Seguin, \textsl{Finite volume schemes for locally constrained
conservation laws}, Numer. Math. 115 (2010) 609-645.

\bibitem{akr} B.  Andreianov, K. H. Karlsen and N. H. Risebro, \textsl{A theory of ${\bf L^1}$-dissipative solvers for scalar conservation laws with discontinuous flux},  Arch. Ration. Mech. Anal. \textbf{201} (2011), no. 1, 27--86.


\bibitem{BLN} C. Bardos, A. Y. Le Roux, J.-C. N\'ed\'elec, \textsl{First
    order quasilinear equations with boundary conditions},
  Comm. Partial Differential Equations \textbf{4} (1979), no. 9,
  1017--1034.

\bibitem{ba-ha-co-da} G. Bastin, B. Haut, J. M. Coron and B. d'Andr\'ea Novel, 
\textsl{Lyapunov stability analysis of networks of scalar conservation laws},  Netw. Heterog. Media \textbf{2} (2007), no. 4, 751--759. 
 
\bibitem{brt} A. Bayen, R. Raffard,  C. Tomlin, \textsl{Adjoint-based control of a new eulerian network model of air
traffic flow}, in: IEEE Transactions on Control Systems Technology 14.5 (Sept. 2006), pp. 804--818. 


\bibitem{bressan-book} A. Bressan, Hyperbolic systems of conservation
  laws.  The one-dimensional Cauchy problem. Oxford Lecture Series in
  Mathematics and its Applications, 20. Oxford University Press,
  Oxford, 2000.   

\bibitem{bcghp}  A. Bressan, S. Canic, M. Garavello, M. Herty and B. Piccoli, 
 \textsl{Flow on networks: recent results and perspectives},
EMS Surv. Math. Sci. \textbf{1} (2014), 47-111.

%
%


\bibitem{bk1} A. Bressan, K. T. Nguyen, 
 \textsl{Conservation law models for traffic flow on a network of roads}, 
Netw. Heterog. Media \textbf{10}  (2015), no. 2, 255--293. 

\bibitem{bk2} A. Bressan, K. T. Nguyen, 
\textsl{Optima and Equilibria for Traffic Flow on Networks with Backward Propagating Queues}, Netw. Heterog. Media, 
 \textbf{10}  (2015), no. 4, 717--748. 
 
\bibitem{br-no} A. Bressan, A. Nordli, 
 \textsl{The Riemann solver for traffic flow at an
intersection with buffer of vanishing size}, 
Netw. Heterog. Media \textbf{12}  (2017), no. 2, 1--17. 

 
\bibitem{b-y} A. Bressan, F. Yu, \textsl{ Continuous Riemann Solvers
for Traffic Flow at a Junction}, Discrete Contin. Dyn. Syst. \textbf{35} (2015), no. 9, 
4149--4171.

%

%

\bibitem{ca-d-d-m}  R.C. Cascaval, C. D'Apice, M.P. D'Arienzo and R. Manzo,
\textsl{Flow optimization in vascular networks}, 
 Math. Biosci. Eng.  \textbf{14} (2017), no. 3, 607--624. 

\bibitem{cdpr1} A. Cascone, C. D'Apice, B. Piccoli, L. Rarit\`a, 
\textsl{Optimization of traffic on road networks}, 
 Math. Models Methods Appl. Sci. \textbf{17} (2007), no. 10, 1587--1617.
 
 \bibitem{cdpr2} A. Cascone, C. D'Apice, B. Piccoli, L. Rarit\`a, 
\textsl{Circulation of car traffic in congested urban areas}, 
 Commun. Math. Sci. \textbf{6} (2008), no. 3, 765--784.
 
\bibitem{cmpr} A. Cascone, A. Marigo, B. Piccoli, L. Rarit\`a, 
\textsl{Decentralized optimal routing for packets flow on data networks}, 
 Discrete Contin. Dyn. Syst. Ser. B 13 (2010), no. 1, 59--78.
 
\bibitem{cg} G. M. Coclite, M. Garavello,
\textsl{ Vanishing viscosity for traffic on networks}, SIAM J. Math. Anal. \textbf{42} 
(2010), no. 4, 1761--1783.

\bibitem{CGP} G. M. Coclite, M. Garavello, B. Piccoli, \textsl{Traffic
    flow on a road network}, SIAM J. Math. Anal. \textbf{36} (2005),
  no. 6, 1862--1886.

 \bibitem{CoGa}  R. M. Colombo and M. Garavello, \textsl{Control of biological resources on graphs},
  ESAIM Control Optim. Calc. Var. \textbf{23} (2017), no. 3, 1073–1097.

   \bibitem{CoGo}  R. M. Colombo, P. Goatin, \textsl{A well posed conservation law with a variable unilateral constraint},
    J. Differential Equations {\bf 234} (2007) 654--675.
    
  \bibitem{cgr}  R. M. Colombo, P. Goatin and M. D. Rosini, \textsl{On the modelling and management of traffic},
  ESAIM Math. Model. Numer. Anal., {\bf 45}, (2011), 853--872.
  
\bibitem{cghs} R. M. Colombo, G. Guerra. M. Herty and V. Schleper, 
  \textsl{Optimal control in networks of pipes and canal},  
  SIAM J. Control Optim. 48 (2009), no. 3, 2032-2050.
  
\bibitem{cr} R. M. Colombo, E. Rossi,
  \textsl{Stability of the 1D IBVP for a non autonomous scalar
    conservation law}, 
  Proceedings of the Royal Society of Edinburgh, to appear.

\bibitem{c-f-l} R. Courant, K. Friedrichs, H. Lewy.
  \textsl{On the partial difference equations of mathematical physics}. 
  IBM J. Res. Develop. \textbf{11} (1967), 215--234. 



\bibitem{d} C. M. Dafermos, \textsl{Generalized characteristics and the
    structure of solutions of hyperbolic conservation laws.}  Indiana
  Univ. Math. J. \textbf{26} (1977), no. 6, 1097--1119.
  
  \bibitem{daf-book} C. M. Dafermos, Hyperbolic Conservation Laws in Continuum Physics. Volume 325 of Grundlehren der Mathematischen
Wissenschaften [Fundamental Principles of Mathematical Sciences]. 
  Fourth ed. Berlin: Springer-
Verlag, 2016

\bibitem{dag} C. Daganzo, \textsl{Fundamentals of transportation and traffic operations},  Pergamon-Elsevier,
Oxford, UK, 1997.
 
 \bibitem{Da-G-H-P} C. D'Apice, S. G\"ottlich, M. Herty and B. Piccoli, Modeling, Simulation, and Optimization of Supply Chains: A Continuous Approach, SIAM, Philadelphia, PA, 2010.
 
 \bibitem{Da-M-H-P}
C. D'Apice, R. Manzo and B. Piccoli, \textsl{Optimal input flow for a PDE-ODE model of supply
chains}, Commun. Math. Sci., 10 (2012), 1225--1240.

\bibitem{dm-p-r} M.L. Delle Monache, B. Piccoli and F. Rossi, \textsl{Traffic regulation via controlled speed limit}, 
arXiv:1603.04785v1 [math.OC], 2016.

  
\bibitem{dm-r-s-k-g-b} M.L. Delle Monache, J. Reilly, S. Samaranayake, W. Krikene, P. Goatin
and M. Bayen, \textsl{A PDE-ODE model for a junction with rump buffer}, 
SIAM J. Appl. Math.  \textbf{74} (2014), no. 1, 2--39.
  


\bibitem{g-h-p} M.  Garavello, K. Han   and  B.  Piccoli,
 \textsf{Models for   vehicular traffic on networks}, volume 9, 
 AIMS Series on Applied Mathematics. American Institute of Mathematical Sciences (AIMS),
Springeld, MO, 2016. ISBN: 978-1-60133-019-2.



\bibitem{gp5} M.  Garavello  and  B.  Piccoli, \textsl{Source-destination flow on a road network},
 Commun. Math. Sci., {bf 3} (2005), 261--283.


 
 \bibitem{gp2} M.  Garavello  and  B.  Piccoli,  \textsf{Traffic flow on networks}, volume 1, 
 AIMS Series on Applied Mathematics. American Institute of Mathematical Sciences (AIMS),
Springeld, MO, 2006. xvi+243 pp. ISBN: 978-1-60133-000-0.
 
\bibitem{gp3} M.  Garavello  and  B.  Piccoli,
  Time-varying Riemann  solvers  for  conservation  laws  on networks, 
  J. Differential Equations,  247 (2009), 447--464.
  
\bibitem{gp4} M.  Garavello  and  B.  Piccoli,
  Conservation  laws  on complex networks, 
  Ann. Inst. H. Poincar\'e,  26 (2009), 1925--1951.  
  

  \bibitem{god-rav} E. Godlewski and P.-A. Raviart, 
    \textsl{Numerical approximation of hyperbolic systems of conservation laws}.
    Applied Mathematical Sciences, 118. Springer-Verlag, New York, 1996.
    viii+509 pp.


\bibitem{ghz} S. G\"ottlich, M. Herty and U. Ziegler, \textsl{Modeling and optimizing traffic light settings in road networks},
 Comput. Oper. Res. {\bf 55} (2015), 36--51.

\bibitem{ghkl} M. Gugat, M. Herty, A. Klar, G. Leugering, \textsl{Optimal Control for Traffic Flow Networks},
  J. Optim. Theory Appl. {\bf 126} (2005), 589--616.  

\bibitem{g-k-l-w} M. Gugat, A. Keimer, G. Leugering, Z. Wang, 
  \textsl{Analysis of a system of nonlocal conservation laws for
    multi-commodity flow on networks},
  Netw. Heterog. Media \textbf{10} (2015), no. 4, pp. 749–785.
  
\bibitem{dh-pr-co-da-ba} J. de Halleux, C. Prieur, J.-M. Coron, B. d'Andr\'ea Novel and G. Bastin, 
\textsl{Boundary feedback control in networks of open channels},  Automatica J. IFAC {\bf 39} (2003), no. 8, 1365--1376. 
  
  
\bibitem{hfy} K. Han, T. L. Friesz and T. Yao, 
\textsl{A variational approach for continuous supply chain networks}, SIAM J. Control Optim., {\bf 52}  (2014),
no.1, 663--686.
  
 
\bibitem{hkk} M. Herty, C. Kirchner, A. Klar \textsl{Instantaneous control for traffic flow},
  Math. Meth. Appl. Sci., {\bf 30} (3) (2007), 153--169.
  

\bibitem{hk}   M. Herty, A. Klar \textsl{Modeling, simulation, and optimization of traffic flow networks}, SIAM Journal on Scientific Computing, {\bf 25} (3) (2003), 1066-1087.



\bibitem{holden-risebro} H. Holden, N. H. Risebro, 
  \textsl{Front tracking for hyperbolic conservation laws}. Second edition.
  Applied Mathematical Sciences, 152. Springer, Heidelberg, 2015. xiv+515 pp. 

\bibitem{hr} H. Holden, N. H. Risebro,  \textsl{A mathematical model of traffic flow on a network of unidirectional roads}, SIAM J. Math. Anal. \textbf{26} (1995), no. 4, 999--1017.

\bibitem{ho-bo} S.P. Hoogendoorn and P.H. L. Bovy, \textsl{ State-of-the-art
of vehicular traffic flow modelling},
 Proc. Inst. Mech. Eng., Part I, J. Syst. Control Eng., 4, 215 (2001), 283-303.

 
\bibitem{im} C. Imbert, R. Monneau, \textsl{
Flux-limited solutions for quasi-convex Hamilton-Jacobi equations on networks.}
Ann. Sci. Ec. Norm. Super. (4) \textbf{50} (2017), no. 2, 357?448. 

    
\bibitem{JdWK1} D. Jacquet, C. C. de Wit,  D. Koenig. \textsl{Optimal Ramp Metering Strategy with Extended LWR
Model, Analysis and Computational Methods}, in: Proceedings of the 16th IFAC World Congress
(2005), 100--104.
    

\bibitem{JdWK2} D. Jacquet, C. C. de Wit,  D. Koenig. \textsl{Optimal control of systems of conservation laws and application to non-equilibrium traffic steering},  Int. J. Tomogr. Stat. 5 (2007), no. W07, 144--149.

    

  
  \bibitem{kruz} S.N. Kru\v{z}kov, \textsl{First order quasilinear equations in several independent variables} (Russian)
  Mat. Sb. (N.S.)  81 (123) (1970), 288--255.


\bibitem{LM-A-H-R}
M. La Marca, D. Armbruster, M. Herty and C. Ringhofer,  \textsl{Control of continuum models of
production systems}, IEEE Trans. Automatic Control, 55 (2010), 2511--2526.

\bibitem{lax} P.D. Lax, \textsl{Hyperbolic systems of conservation laws II}, Comm. Pure Appl. Math., 10 (1957),
537--566.

\bibitem{leb} J.-P. Lebacque and M. Khoshyaran,  \textsl{First order macroscopic traffic flow
models for networks in the context of dynamic assignment}, in:
 Transportation Planning, State of the Art, M. Patrikkson and M. Labbe Eds.,
 Kluwer Academic Press, Vol. 64 (2002), 119--140.
    
\bibitem{lf} P. G. Le Floch.  \textsl{Explicit formula for scalar
non-linear conservation laws with boundary condition},
Math. Methods in Appl. Sci. \textbf{10} (1988), 265--287.

\bibitem{LFnc} P. G. Le Floch. Hyperbolic systems of conservation
  laws.  The theory of classical and nonclassical shock
waves. Lectures in Mathematics ETH
Z\"urich. Birkh\"auser Verlag, Basel, 2002.

    
\bibitem{leveque} R. LeVeque, Finite volume methods for hyperbolic
problems, Cambridge Texts in Applied Mathematics. Cambridge University Press,
Cambridge, 2002.  

\bibitem{li-ca-cl}  Y.  Li,  E.  Canepa,  and  C.  Claudel,  \textsl{Efficient  robust  control  of  first  order  scalar  conservation  laws  using  
semi-analytical  solutions}, Discrete Continuous Dyn. Syst. Ser. S  7 (2014), no. 3, 525--542. 
   
\bibitem{li-whi} M. Lighthill and G. Whitham, \textsl{On kinematic waves. II. A theory of traffic flow on long crowded roads}, Proceedings of the Royal Society of London: Series A, \textbf{229} (1955), 317--345.


\bibitem{l-s} P.-L. Lions and P. Souganidis , \textsl{Viscosity solutions for junctions: well posedness
and stability}, 
 Atti Accad. Naz. Lincei Rend. Lincei Mat. Appl. 27 (2016), no. 4, 535--545. 
 
\bibitem{mi-ba-to} I. M. Mitchell, A. M. Bayen and C. J. Tomlin, \textsl{A time-dependent Hamilton-Jacobi formulation
of reachable sets for continuous dynamic games}, IEEE Trans.  Automatic Control ,
50 (2005), 947--957.
    
\bibitem{odmgk} L.L. Obsu, M.L. Delle Monache, P. Goatin, S.M. Kassa,
\textsl{Traffic flow optimization on roundabouts}, Math. Methods Appl. Sci., 38 (2015), no. 14, 3075--3096. 



\bibitem{Oleinik} O. A. Ole\v{\i}nik, \textsl{Discontinuous solutions of non-linear differential equations}, Uspehi Mat. Nauk (N.S.) 12 (1957) no. 3(75), 3--73. (Russian) English translation in Ann. Math. Soc. Trans. Ser. 2 26, 95--172.


\bibitem{Ot} F. Otto, \textsl{Initial-boundary value problem for a scalar conservation law},
 C. R. Acad. Sci. Paris S\'er I Math 322 (1996), no.~8, 729--734.

\bibitem{rkdmsg} J. Reilly, W. Krichene, M.L. Delle Monache, S. Samaranayake, P. Goatin and A.M. Bayen,
\textsl{Adjoint-based optimization on a network of discretized scalar conservation law
 PDEs with applications to coordinated ramp metering},  J. Optim. Theory Appl., 167 (2015), no. 2,
pp. 733-760.

\bibitem{ri}  P. I. Richards, Shock waves on the highway, \textsl{Oper. Res.}, \textbf{4} (1956), 42--51.



\bibitem{vo} A.I. Vol'pert, \textsl{Spaces BV and quasilinear equations}, (Russian)
  Mat. Sb. (N.S.)  73 (115) (1967), 255--302.

    
\end{thebibliography}
\end{document}